\definecolor{clemson-orange}{RGB}{234,106,32}
\definecolor{broncos-orange}{RGB}{252,76,2}
\definecolor{cincinnati-red}{RGB}{190,0,0}
\definecolor{chicago-maroon}{RGB}{128,0,0}
\definecolor{northwestern-purple}{RGB}{82,0,99}
\definecolor{pink}{RGB}{255,105,180}
\definecolor{celtics}{RGB}{46,123,59}
\definecolor{leafs-blue}{RGB}{0,58,120}
\definecolor{pure-cyan}{RGB}{0,100,92}
\definecolor{lawngreen}{RGB}{0,250,154}
\newcommand{\bb}{\mathbb}
\newcommand{\ex}{{\bb E}}
\newcommand{\R}{\bb R}
\newcommand{\BR}{\bb R}
\newcommand{\E}{\bb E}
\newcommand{\bone}{\mathds{1}}
\newcommand{\rank}{\textnormal{rank}}
\newcommand{\cch}{\overline{\text{conv}}\,}
\DeclareMathOperator\dist{dist}
\DeclareMathOperator\ext{ext}
\DeclareMathOperator\cl{cl}
\DeclareMathOperator\bd{bd}
\crefname{assumption}{Assumption}{Assumptions}
\crefname{lemma}{Lemma}{Lemmas}
\crefname{theorem}{Theorem}{Theorems}
\crefname{corollary}{Corollary}{Corollaries}
\crefname{proposition}{Proposition}{Propositions}
\crefname{claim}{Claim}{Claims}
\crefname{subclaim}{Subclaim}{Subclaims}
\crefname{procedure}{Procedure}{Procedures}
\crefname{algorithm}{Algorithm}{Algorithms}
\crefname{figure}{Figure}{Figures}
\crefname{remark}{Remark}{Remarks}
\crefname{definition}{Definition}{Definitions}
\crefname{table}{Table}{Tables}
\theoremstyle{definition}
\newtheorem{theorem}{Theorem}[section]
\newtheorem{claim}{Claim}
\newtheorem{subclaim}{Subclaim}
\newtheorem{definition}[theorem]{Definition}
\newtheorem{lemma}[theorem]{Lemma}
\newtheorem{proposition}[theorem]{Proposition}
\newtheorem{assumption}{Assumption}
\title{The discrete moment problem with nonconvex shape constraints}
\author{Xi Chen\thanks{Stern School of Business, New York University, E-mail: xchen3@stern.nyu.edu} \qquad
Simai He\thanks{School of Information Management and Engineering,
Shanghai University of Finance and Economics,
E-mail: simaihe@mail.shufe.edu.cn}  \qquad
Bo Jiang\thanks{School of Information Management and Engineering,
Shanghai University of Finance and Economics,
E-mail: jiangbo@mail.shufe.edu.cn} \qquad
Christopher Thomas Ryan\thanks{%
Booth School of Business, University of Chicago,
E-mail: chris.ryan@chicagobooth.edu} \qquad
Teng Zhang\thanks{Management Science and Engineering, Stanford University,
E-mail: tengz@stanford.edu}
}
\affil{}
\begin{document}

\maketitle

\begin{abstract}
The discrete moment problem is a foundational problem in distribution-free robust optimization, where the goal is to find a worst-case distribution that satisfies a given set of moments.
This paper studies the discrete moment problems with additional ``shape constraints'' that guarantee the worst case distribution is either log-concave or has an increasing failure rate. These classes of shape constraints have not previously been studied in the literature, in part due to their inherent nonconvexities. Nonetheless, these classes of distributions are useful in practice.
We characterize the structure of optimal extreme point distributions by developing new results in reverse convex optimization, a lesser-known tool previously employed in designing global optimization algorithms.
We are able to show, for example, that an optimal extreme point solution to a moment problem with $m$ moments and log-concave shape constraints is piecewise geometric with at most $m$ pieces. 
Moreover, this structure allows us to design an exact algorithm for computing optimal solutions in a low-dimensional space of parameters. Moreover,  We describe a computational approach to solving these low-dimensional problems, including numerical results for a representative set of instances.

\vskip 10pt

\textbf{Keywords:} Robust optimization, moment problem, log-concave, increasing failure rate, nonconvex optimization, reverse convex programming
\end{abstract}

\section{Introduction}\label{s:introduction}

The moment problem is a classical problem in analysis and optimization, with
roots dating back to the middle of the nineteenth century. At that time, the
goal there was to seek to
bound tail probabilities and expectations with given distributional moment information. Pursuing this initial goal remains active to the present day. For example, \citet{Bertsimas2005} provides tight closed form bounds of $P(X\ge (1+\delta)EX)$ with given first three moments of a random variable $X$. \citet{He2010} extends the problem for random variables given first, second and forth order moments, which also provided the first
nontrivial bound for $P(X\ge EX)$.

Beyond these foundational questions, the moment problem serves as an important
building block in a variety of applications in the stochastic and robust optimization
literatures \citep{prekopa2013stochastic,popescu2005semidefinite,saghafian2016newsvendor,rujeerapaiboon2016chebyshev,tian2017moment}. In particular, moment
problem are foundational to distribution-free robust optimization, where insight into the
structure of optimal measures can
be used to devise algorithms and describe properties of optimal decisions. A
classic example of this approach is due to \citet{scarf1958min} who leverages
the fact that an optimal solution to the moment problem given the first two
moments is a sum of two Dirac measures. This insight provides an analytical
formula for the optimal inventory decision in a robust version of the
newsvendor problem. There is a vast literature on robust optimization that builds on these initial insights in a variety of facets (see, for instance, \cite{Goh2010,delage2010distributionally,jiang2012robust,bandi2012tractable,bertsimas2013data,long2014distributionally,gao2016distributionally,
chen2016distributionally,Natarajan2017} among many others).

The focus of this paper is the discrete moment problem,
an important special case of the general moment that is less
well-studied in the literature. In the discrete moment problem, the
underlying sample space is a discrete set. The work of Pr\'ekopa (see for
instance \citet{prekopa1990sharp}) made a fundamental
contribution by devising efficient linear programming methods to study
discrete moment problems. These approaches remain state-of-the-art and has
seen application in numerous areas including project management
\citep{prekopa2004new} and network reliability \citep{prekopa1991existence}. Project management has also been studied in the robust optimization (see, for instance, \cite{bertsimas2006persistence}).

In classical versions of the moment problem (including the works by Pr\'ekopa and his co-authors just cited), the only constraints arise from
specifying a finite number of moments. One criticism of this approach is that it can result in
bounds and conclusions that may be too weak to be
meaningful, or in the case of robust optimization with only moment
constraints, result in decisions that are too conservative. For instance,
Scarf's solution for the newsvendor problem may even suggest to not order
\emph{any} inventory even when the profit margin is high \citep{perakis2008regret}.
This has driven researchers to introduce additional constraints, including those
on the \emph{shape} of the distribution. For example, \citet{perakis2008regret} study the newsvendor problem leveraging non-moment information, including symmetry and unimodality.
\citet{Han2014} study the newvendor problem relaxing the usual assumption of risk neutrality.
\citet{saghafian2016newsvendor} analyze the problem with the bound of
tail probability and \citet{Natarajan2017} recently developed closed-form
solutions under asymmetric demand information. In all cases, more intuitive and less conservative inventory decisions result, when compared to the classical setting with moment information alone. Other robust optimization papers that consider shape constraints include
\citet{LiJiang:16} who study the chance constraints and conditional Value-at-Risk constraints when the distributional information consists of the first two moments and unimodality,  \citet{lam2015tail} who study tail distributions with convex-shape constraints, and \citet{Hanasuanto:15} who study the multi-item newsvendor problems with multimodal demand distributions.

However, introducing shape constraints brings new theoretical challenges. A seminal paper by \citet{popescu2005semidefinite} provides a general framework for studying continuous moment problems under shape constraints that includes, among others, symmetry and unimodality. These moment problems are formulated as semi-definite programs (SDPs) that are polynomial time solvable. \citet{perakis2008regret} also employ Popescu's framework to provide analytical robust solutions to the newsvendor problem under shape constraints that are better behaved than classical Scarf solutions.  For the discrete moment problem, we are aware of only one paper \citep{subasi2009discrete} that considers shape constraints. \citet{subasi2009discrete} adapt Pr\'ekopa's linear programming (LP) methodology to include unimodality, which is modeled by an additional set of linear constraints.

Both \citet{popescu2005semidefinite} and \citet{subasi2009discrete} illustrate how a certain class of constraints can be adapted into existing computational frameworks, SDP-based in the case of \citeauthor{popescu2005semidefinite} and LP-based in the case of \citeauthor{subasi2009discrete}. However, there remains relevant shape constraints that are practical significant and do not naturally fit into these settings. In this paper we focus on two shape constraints: log-concavity (LC) and the increasing failure rate (IFR) property of discrete distributions (these are defined in \Cref{s:problem} below). Here, we briefly highlight the importance and the applications for each class of distributions.
\begin{enumerate}[(i)]
\item  LC measures arise naturally in many applications. For example, \citet{subasi2009discrete} illustrate how the length of a critical path in a PERT model where individual task times are described by beta distributions has a LC distribution but its other properties (other than moments inferred by the beta distributions) are unknown. Log-concavity has a wide range of applications to statistical modeling and estimation \citep{Walther:00}, e.g., \citet{Duembge:11} show how the log-concavity allows the estimation of a distribution based on arbitrarily censored data (which is a common form of data  for demand observations). The log-concavity also plays a critical role in economics \citep{Bagnoli:05}. For example, in contract theory, one commonly assumes that an agent's type is a LC random variable \citep{Laffont:88}.  The log-concavity of a distribution function has also been widely used in theory of regulation \citep{BaronMyerson:82,Lewis:88}, and in characterizing efficient auctions \citep{Myerson:83,Matthews:87}.

\item IFR distributions also play an important role in numerous applications in fields as wide-reaching as reliability theory \citep{barlow1996mathematical}, inventory management \citep{gavirneni1999value}, revenue management \citep{lariviere2006note} and contract theory \citep{dai2016impact,lariviere2001selling}. One reason for the prevalence IFR distributions in applications is that IFR distributions are closed under sums of random variables (and the associated convolutions of distribution functions). This is not the case for the shape properties studied by others, including symmetry and unimodality. The IFR property is useful in applications for simplifying optimality conditions to facilitate the derivation of properties of optimal decisions that yield managerial insights.
\end{enumerate}

In \Cref{s:problem} we show that the standard characterizations of discrete LC and IFR distributions, when added to the moment problem, make the resulting problem nonconvex and thus not amenable to either an SDP or LP formulation. Indeed, when \citet{subasi2009discrete} derive LC distributions in their applications, they \emph{relax} the LC property to unimodality, a shape constraint that \emph{can} be approached by LP techniques.

At this point, one could turn to approximation methods, including conic-optimization techniques to solve the resulting nonconvex formulation. It is well known that the copositive cone and its dual are powerful tools to could convert nonconvex problems equivalently into convex ones (see, e.g., \citep{pena2015completely, xu2016copositive, burer2009copositive, hanasusanto2016conic}). For instance, the LC discrete moments problem considered here can be cast as a completely positive conic problem \citep{pena2015completely}. Despite this convexity, the resulting problem is still computationally intractable and further relaxation is required to obtain an approximate solution.

We do not follow an approximation approach. The nonconvexities that arise in our problems are of a certain type that can be leveraged to provide an exact global optimization algorithm and analytical results on the structure of optimal solutions. Indeed, the feasible regions have \emph{reverse convex} properties (as introduced in \citep{meyer1970validity} and later developed in \citep{hillestad1980reverse} among others). A set is reverse convex if its complement is convex. Reverse convex programming is a little-studied field that has largely found application in the global optimization literature (see, for instance, \citet{horst1999dc}). To our knowledge, this theory has not been applied in the robust optimization literature.

In \Cref{s:optimization-theory} we extend standard results in the reverse convex programming literature (in particular, those of \cite{hillestad1980reverse}) so that they are applicable to our setting by introducing the notion of reverse convexity \emph{relative} to another set. The main benefit is that we can show reverse convex programs of this type have the following appealing structure --- there exist optimal extreme point solutions with a basic feasible structure analogous to basic feasible solutions in linear programming. The basic feasible structure reveals (in \Cref{s:characterize}) that optimal extreme point distributions in the LC and IFR settings have piecewise geometric structure.  This analytical characterization allows for solving these moment problems as low-dimensional systems of polynomials equations. We propose a specialized computation scheme for working with such systems. This allows us to provide numerical bounds on probabilities that are tighter than those in the existing literature, including those bounds that leverage unimodal shape constraints (see \Cref{s:computation-and-numerical}). All proofs not in the main text are included in the appendix.

\subsection*{Summary of contributions}

The main focus of the paper is on theoretical properties of LC and IFR-constrained moment problems, where we provide structural results on optimal solutions. For the LC case we show there exists optimal solutions that are piecewise geometric, and for the IFR case we show the tail probabilities of optimal distributions are piecewise geometric.

Our structural results and computational approaches suggest a wide range of applications due the prevalence of these classes of shape-constraints in real applications, as discussed above. Our results can provide new bounds on tail inequalities (i.e., $\Pr(X \geq a)$) for a random variable $X$ under moment and shape constraints. We provide a numerical framework for computing these bounds.

Moreover, the techniques developed in this paper allow us to solve an inner maximization problem with LC and IFR constraints. Our structural results could prove useful in solving the outer minimization problem in a robust optimization framework. Indeed, solution approaches to the standard max-min robust optimization formulation benefit greatly when the inner maximization problem has analytical structure.

Finally, we prove a new result on a generalized form of reverse convex optimization (\cref{Thm:general-nonconvex}) that may be of independent interest, with potential applications to other nonconvex optimization problems.

\subsection*{Notations}

We use the following notation throughout the paper. Let $\R$ denote the set of real numbers and $\R^n$ the vector space of $n$-dimensional real vectors. Moreover, let $\R^n_+$ denote the set of $n$-dimensional vectors with all nonnegative components and $\R^n_{++}$ denote the set of $n$-dimensional vectors will all positive components. The closure of the set $S$ in $\R^n$ (in the usual topology) is denoted $\cl(S)$ and its boundary by $\bd(S)$. Let $\E[\cdot]$ denote the expectation operator and $\bone_A$ the indicator function of set $A$.

Let $[k, \ell] = \left\{k, k+1, \dots, \ell-1, \ell \right\}$ denote the set of consecutive integers, starting with integer $k$ and ending with integer $\ell$. Similarly, let $(k,\ell) = \left\{k+1,k+2, \dots, \ell-2, \ell-1\right\}$. We will not have occasion to use $[\cdot,\cdot]$ and $(\cdot,\cdot)$ in their usual sense as intervals in $\R$, so there is no chance for confusion. For $k$, $j$ positive integers, $\binom{k}{j}$ denotes the binomial coefficient of $k$ choose $j$; that is, it counts the number of ways to choose $j$-subsets of $k$ objects.

\section{The discrete moment problem with nonconvex shape constraints}\label{s:problem}

We study the classical problem of moments with $m$ moments (cf. \cite{popescu2005semidefinite}):
\begin{align*}
\max_{\mu \in \mathcal P} \  & \int_\Omega f(w) d\mu \\
\text{s.t. } & \int_\Omega w^i d\mu(w) = q_i  \text{ for } i \in [0, m] \nonumber
\end{align*}
where $\mathcal P$ is a subset of measures $\mu$ on the measurable space $\Omega$ (with elements denoted by $w$) with $\sigma$-algebra $\mathcal B$, $f$ is a measurable function and $q_i \in \R$ for $i \in [0, m]$. We take $q_0 = 1$ to ensure that $\mu$ is a probability measure and the remaining $m$ constraints correspond to requiring the measure $\mu$ has $q_1, \dots, q_m$ as its first $m$ moments.

Our focus is where $\Omega = \left\{w_1, \dots, w_n\right\} \subseteq \R$ is
a finite set of real numbers and $\mathcal B$ is the power set of $\Omega$.
In fact, we assume that $\Omega = \left\{1,2,\dots,n\right\}$ and so $w_j =
j$ (however, see Section~\ref{ss:computational-method} where we have occasion
to rescale the $w_j$). In this setting, a measure $\mu$ can be represented by
a nonnegative $n$-dimensional vector $(x_1, \dots, x_n)$ where $\mu({w_j}) =
x_j$ and $f(w_j) = f_j$ for $j \in [1,n]$. We will refer to the vector $(x_1,
\dots, x_n)$ as a \emph{distribution} and often suppress the measure $\mu_x$
that it represents. This yields the following \emph{discrete moment problem}
(DMP):

\begin{subequations}\label{eq:discrete-moment-problem}
\begin{align}
&&\max_{x \in \R^n} \  & \sum_{j=1}^n f_j x_j  \label{eq:dmp-objective}
\qquad\qquad\qquad\qquad\qquad\qquad\qquad\qquad\qquad\\
(\text{DMP}) && \text{s.t. } & \sum_{j=1}^n w_j^i x_j = q_i \text{ for } i \in [0, m] \label{eq:dmp-moments} \\
&& & \mu_x \in \mathcal P. \label{eq:dmp-shape-constraints}
\end{align}
\end{subequations}
We study \eqref{eq:discrete-moment-problem} for two specifications of the set of distributions $\mathcal P$ in constraint \eqref{eq:dmp-shape-constraints}.

\begin{definition}[cf. Definition~2.2 in \cite{canonne2015testing}]\label{definition:log-concave}
A distribution $x = (x_1, \dots, x_n)$ is \emph{discrete log-concave} (or simply \emph{log-concave} or (LC)) if (i) for any $1 \le k < j < \ell \le n$ such that $x_kx_\ell > 0$ then $x_j > 0$; and (ii) for all $j \in (1,n)$, $x_{j-1}x_{j+1} \le x_j^2$. We let $\mathcal P_{\text{LC}}$ denote the class of all LC distributions.
\end{definition}

More precisely, (i) implies that for every LC distribution there exists a \emph{consecutive support} $[k, \ell]$ for some $1 \le k \le \ell \le n$ such that $x_j > 0$ for $j \in [k,\ell]$ and $x_j = 0$ otherwise. For an LC distribution $x$ with support $[k, \ell]$ we must then ensure $x_{j-1}x_{j+1} \le x_j^2$ holds for $j \in (k, \ell)$. At all other $j$ the inequality is trivial because at least one of $x_{j-1}$ or $x_{j+1}$ is zero.

\begin{definition}[cf. Definition~2.4 in \cite{canonne2015testing}]\label{definition:ifr}
A distribution $x = (x_1, \dots, x_n)$ has an \emph{increasing failure rate} (IFR) if the failure rate sequence $r_j := \tfrac{x_j}{\sum_{k=j}^n x_k}$ is a non-decreasing sequence; that is, $r_k \ge r_j$ for all $k \ge j$. We let $\mathcal P_{\text{IFR}}$ denote the class of all IFR distributions.
\end{definition}

It is well-known that $\mathcal P_{\text{LC}}$ is a strict subset of $\mathcal P_{\text{IFR}}$ \citep{an1997log}.
%
It is relatively straightforward to see that the sets $\mathcal P_{\text{LC}}$ and $\mathcal P_{\text{IFR}}$ are nonconvex. However, they share one additional common feature that is critical to our approach.

\begin{definition}\label{definition:reverse-convex}
A set $R$ in $\R^n$ is \emph{reverse convex} if $R = \R^n \setminus S$ for some convex set $S \subseteq \R^n$.  A set $R$ is said to be \emph{reverse convex with respect to (w.r.t)  a set} $T \subseteq \R^n$ if $R = T \setminus S$ for some convex set $S$.
\end{definition}

In the remainder of this section we show that the problem (DMP) when setting $\mathcal P $ be $\mathcal P_{\text{LC}}$ or $\mathcal P_{\text{IFR}}$ all have constraints that are reverse convex w.r.t. $\R^n_+$. This common fact is leveraged to solve these related problems to global optimality in a unified framework.

The seemingly more or less straightforward generalization to reverse convexity w.r.t. $\R^n_+$, however, could lead to a significantly different analytical properties. For example, observe that if a function $f : \R^n \to \R$ is quasiconcave (over $\R^n$) then its lower level sets are reverse convex. However, a function whose lower level sets are reverse convex w.r.t. some strict subset $T$ of $\R^n$ need not be quasiconcave.
In \Cref{s:optimization-theory} we show that problems with reverse convex structure can be approached using a novel optimization technique that extends the pioneering work of \cite{hillestad1980reverse}. 


\subsection{The moment problem over log-concave distributions}\label{ss:log-concave-problem}

Consider problem (DMP) when $\mathcal P = \mathcal P_{\text{LC}}$. We separate the optimization over $x$ into first determining a support (mapping to condition (i) of \cref{definition:log-concave}) and then introducing inequalities of the form $x_{j-1}x_{j+1} \le x_j^2$ for $j$ in that support (mapping to condition (ii) in \cref{definition:log-concave}). This yields the two-stage optimization problem:
\begin{subequations}\label{eq:DMP-LC}
\begin{align}
&&\max_{k, \ell: 1\le k\le \ell \le n} \ \max_{x \in \R^n} \  & \sum_{j=k}^\ell f_j x_j
\qquad\qquad\qquad\qquad\qquad\qquad\qquad\qquad\qquad\\
(\text{DMP-LC}) && \text{s.t. } & \sum_{j=k}^\ell w_j^i x_j = q_i \text{ for } i \in [0, m] \label{eq:dmp-lc-moments}\\
            && & x_{j-1}x_{j+1} \le x_j^2 \text{\ \ for } j \in (k, \ell) \label{eq:dmp-bad-constraints} \\
            && & x_j > 0 \text{ for } j \in [k, \ell] \label{eq:dmp-strict} \\
            && & x_j = 0 \text{ for } j \notin [k, \ell]. \label{eq:zero-outside-support}
\end{align}
\end{subequations}
The strict constraints \eqref{eq:dmp-strict} make the feasible region appear not to be closed. However, the following reformulation of (DMP-LC) reveals that the feasible region can be described with non-strict inequalities and is thus closed:
\begin{subequations}\label{eq:DMP-LCPRIME}
\begin{align}
&&\max_{x \in \R^n} \  & \sum_{j=1}^n f_j x_j
\qquad\qquad\qquad\qquad\qquad\qquad\qquad\qquad\qquad\\
(\text{DMP-LC'}) &&\text{s.t. } & \sum_{j=1}^n w_j^i x_j = q_i \text{ for } i \in [0, m]
\label{eq:dmp-lc-prime-moments} \\
            && & x_{j-u}^v x_{j+v}^u \le x_j^{u+v} \text{\ \ for } j \in (1,n), u \in [1,j-1] , v \in [1,n-j] \label{eq:dmp-good-constraints} \\
            &&  & x_j \ge 0 \text{ for } j \in [1, n]. \label{eq:dmp-lc-prime-nonnegativity}
\end{align}
\end{subequations}
In (DMP-LC') there is no outer maximization over the support between $k$ and $\ell$.

\begin{proposition}\label{prop:equivalent-formulations-lc}
Problems (DMP-LC) and (DMP-LC') are equivalent.
\end{proposition}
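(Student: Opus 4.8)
The plan is to show the two problems have the same feasible sets after accounting for the outer maximization in (DMP-LC), by exhibiting mutual "inclusions" at the level of feasible points with equal objective value. The two directions are: (a) any $x$ feasible for (DMP-LC) with some choice of $k,\ell$ is feasible for (DMP-LC'); and (b) any $x$ feasible for (DMP-LC') is feasible for (DMP-LC) for the choice of $k,\ell$ given by its support. Since the objective $\sum_j f_j x_j$ agrees with $\sum_{j=k}^\ell f_j x_j$ whenever $x_j = 0$ outside $[k,\ell]$, matching feasible sets this way immediately gives equality of optimal values.

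For direction (a), suppose $x$ is feasible for (DMP-LC) with support parameters $k,\ell$. The moment constraints \eqref{eq:dmp-lc-prime-moments} coincide with \eqref{eq:dmp-lc-moments} once the out-of-support variables are zero, and nonnegativity \eqref{eq:dmp-lc-prime-nonnegativity} is implied by \eqref{eq:dmp-strict}–\eqref{eq:zero-outside-support}. The only real work is to verify the "powered" inequalities \eqref{eq:dmp-good-constraints}: $x_{j-u}^v x_{j+v}^u \le x_j^{u+v}$ for all valid $j,u,v$. If either $j-u$ or $j+v$ falls outside $[k,\ell]$ the left side is zero and the inequality is trivial; otherwise $j \in (k,\ell)$ and all three indices lie in the support, where $x>0$. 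Here I would take logarithms: writing $a_i = \log x_i$ for $i \in [k,\ell]$, condition \eqref{eq:dmp-bad-constraints} says $a_{j-1} + a_{j+1} \le 2a_j$, i.e.\ the sequence $(a_i)$ is concave on $[k,\ell]$; and \eqref{eq:dmp-good-constraints} (after taking logs and dividing by $u+v$) says $\tfrac{v}{u+v} a_{j-u} + \tfrac{u}{u+v} a_{j+v} \le a_j$, which is exactly the statement that the point $j = \tfrac{v}{u+v}(j-u) + \tfrac{u}{u+v}(j+v)$ lies on or below the chord of the concave sequence through $(j-u, a_{j-u})$ and $(j+v, a_{j+v})$. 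This is the standard fact that midpoint-concavity of a sequence (i.e.\ three-term concavity) implies the full chord inequality at every rational convex combination landing on an integer; one proves it by a straightforward induction on $u+v$.

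For direction (b), suppose $x$ is feasible for (DMP-LC'). Let $[k,\ell]$ be defined as follows: I first need to argue the support is an interval of consecutive integers — that is, condition (i) of \cref{definition:log-concave} holds — and this is where \eqref{eq:dmp-good-constraints} does its real work. If $x_{k'} > 0$ and $x_{\ell'} > 0$ with $k' < j < \ell'$, then taking $u = j - k'$, $v = \ell' - j$ in \eqref{eq:dmp-good-constraints} gives $x_{k'}^v x_{\ell'}^u \le x_j^{u+v}$, and the left side is strictly positive, forcing $x_j > 0$. Hence the support is a consecutive block $[k,\ell]$, giving \eqref{eq:dmp-strict} and \eqref{eq:zero-outside-support}; then \eqref{eq:dmp-lc-moments} follows from \eqref{eq:dmp-lc-prime-moments}, and \eqref{eq:dmp-bad-constraints} is the special case $u = v = 1$ of \eqref{eq:dmp-good-constraints}. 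So $x$ with this $k,\ell$ is feasible for (DMP-LC).

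The main obstacle is the chord-inequality step in direction (a): deducing the full family \eqref{eq:dmp-good-constraints} from the three-term concavity \eqref{eq:dmp-bad-constraints}. It is conceptually routine (discrete concavity), but one must be careful that the argument stays within the support where logarithms are defined, and handle the boundary cases where an index leaves $[k,\ell]$ separately (there the inequality is trivial). Everything else — the moment equalities, nonnegativity, and the support-interval argument in (b) — is immediate bookkeeping.
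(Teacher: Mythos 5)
Your proposal is correct and follows essentially the same route as the paper: one direction uses the $u=v=1$ specialization together with the observation that \eqref{eq:dmp-good-constraints} forces a consecutive support, and the other derives the general inequalities \eqref{eq:dmp-good-constraints} from the three-term constraints \eqref{eq:dmp-bad-constraints} by induction, with the out-of-support cases handled trivially because the left-hand side vanishes. Your passage to logarithms (discrete concavity of $\log x_j$ on the support) is just a cosmetic reformulation of the paper's multiplicative induction, valid since $x_j>0$ there.
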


\cref{prop:set-convex-LC} below shows that (DMP-LC') is a nonconvex optimization problem where constraint \eqref{eq:dmp-good-constraints} defines a reverse convex set w.r.t. $\R^n_+$.

\begin{proposition}\label{prop:set-convex-LC} The set $\{ (x,y,z) : x^{u} y^{v} > z^{u + v},\; x \ge 0, y \ge 0, z \ge 0 \}$ is convex for any positive integers $u$ and $v$.
\end{proposition}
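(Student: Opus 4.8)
The plan is to show the strict sublevel-type set $C := \{(x,y,z)\in\R^3 : x^{u}y^{v} > z^{u+v},\ x,y,z \ge 0\}$ is convex by realizing it as the strict epigraph-style region of a concave function on the first octant. First I would dispose of the boundary behavior: if $(x,y,z)\in C$ then $x^u y^v > z^{u+v}\ge 0$, forcing $x>0$ and $y>0$ (while $z\ge 0$ is allowed, including $z=0$). So $C = \{(x,y,z): x>0,\ y>0,\ z\ge 0,\ z < x^{\alpha}y^{\beta}\}$ where $\alpha = \tfrac{u}{u+v}$, $\beta = \tfrac{v}{u+v}$, and $\alpha+\beta = 1$ with $\alpha,\beta\in(0,1)$.

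The key step is then the classical fact that the function $g(x,y) = x^{\alpha}y^{\beta}$ is concave on $\R^2_{++}$ when $\alpha,\beta \ge 0$ and $\alpha+\beta \le 1$ — this is the weighted AM–GM / Cobb–Douglas concavity statement. One clean way to see it: for $(x_1,y_1),(x_2,y_2)\in\R^2_{++}$ and $\lambda\in[0,1]$, weighted AM–GM gives, coordinatewise on the convex combination,
\[
(\lambda x_1 + (1-\lambda)x_2)^{\alpha}(\lambda y_1 + (1-\lambda)y_2)^{\beta} \ge \lambda\, x_1^{\alpha}y_1^{\beta} + (1-\lambda)\, x_2^{\alpha}y_2^{\beta},
\]
by applying AM–GM with weights $\alpha,\beta$ to the pairs $\bigl(\tfrac{x_1}{\lambda x_1+(1-\lambda)x_2}, \tfrac{y_1}{\lambda y_1+(1-\lambda)y_2}\bigr)$ and similarly for the second point, then taking the $\lambda$-weighted combination. (Alternatively, one checks the Hessian of $\log g$ or of $g$ directly is negative semidefinite; either route is routine.) Concavity of $g$ on the open convex set $\R^2_{++}$ means its strict hypograph $\{(x,y,z): (x,y)\in\R^2_{++},\ z < g(x,y)\}$ is convex, and intersecting with the halfspace $\{z\ge 0\}$ (which is convex) preserves convexity; this intersection is exactly $C$.

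Finally I would verify the convex-combination membership explicitly to make sure the strict inequality and the $z\ge 0$ constraint both survive: given $(x_i,y_i,z_i)\in C$ for $i=1,2$ and $\lambda\in[0,1]$, the point $(\bar x,\bar y,\bar z) := \lambda(x_1,y_1,z_1)+(1-\lambda)(x_2,y_2,z_2)$ has $\bar z \ge 0$ trivially, $\bar x,\bar y > 0$, and
\[
\bar z = \lambda z_1 + (1-\lambda)z_2 < \lambda g(x_1,y_1) + (1-\lambda)g(x_2,y_2) \le g(\bar x,\bar y),
\]
using $z_i < g(x_i,y_i)$ for the first inequality and concavity of $g$ for the second; hence $\bar x^{u}\bar y^{v} > \bar z^{u+v}$, so $(\bar x,\bar y,\bar z)\in C$. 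I do not anticipate a genuine obstacle here — the only thing to be careful about is the reduction to positive $x,y$ (so that $g$ and the fractional exponents are well-defined and the power function is strictly increasing in $z$ on $z\ge 0$), and making sure the strict inequality is not accidentally weakened when the first inequality above is combined with the second; writing the chain as above handles both. The substantive content is entirely the concavity of the Cobb–Douglas function, which is standard.
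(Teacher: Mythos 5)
Your proof is correct, but it takes a genuinely different route from the paper's. The paper first establishes the closed variant $\{(x,y,z): x^u y^v \ge z^{u+v},\ x,y,z\ge 0\}$ as convex by invoking the conic-quadratic representability of the geometric-mean set $\{(x_1,\dots,x_{2^t},z): x_j\ge 0,\ z\le (\prod_j x_j)^{1/2^t}\}$ (citing Ben-Tal and Nemirovski) and intersecting with linear constraints that identify $u$ of the variables with $x$, $v$ with $y$, and the rest with $z$; it then recovers the strict set as the nested union $\bigcup_{\epsilon>0}\{(x,y,z): x^u y^v \ge (z+\epsilon)^{u+v},\ x,y,z\ge 0\}$ and argues convexity of that increasing union. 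You instead observe that strictness forces $x,y>0$, take $(u+v)$-th roots to rewrite the set as the strict hypograph of the Cobb--Douglas function $g(x,y)=x^{u/(u+v)}y^{v/(u+v)}$ on $\R^2_{++}$ intersected with $\{z\ge 0\}$, and conclude from the weighted AM--GM concavity of $g$; the chain $\bar z<\lambda g(x_1,y_1)+(1-\lambda)g(x_2,y_2)\le g(\bar x,\bar y)$ handles the strict inequality directly, with no $\epsilon$-approximation needed. Your argument is more elementary and self-contained (no conic-representability machinery, no union-of-nested-sets step, which the paper words somewhat loosely as ``the union of convex sets is convex''), while the paper's route has the side benefit of delivering the closed version (its Lemma A.1) and a second-order-cone representation that could in principle be exploited computationally. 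Both establish the proposition.
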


Whereas the set $\{ (x,y,z) : x^{u} y^{v} > z^{u + v},\; x \ge 0, y \ge 0, z \ge 0 \}$ is convex, the set where nonnegativity is relaxed -- that is, $S = \{ (x,y,z) : x^{u} y^{v} > z^{u + v} \}$ -- is \emph{not} convex. Indeed, $(-2,-1,0)$ and $(1,2,0)$ are in $S$ but $1/2(-2,-1,0) + 1/2(1,2,0) = (-1/2, 1/2,0)$ is not in $S$. This means that $f(x,y,z) = x^u y^v - z^{u + v}$ is not quasiconcave on its domain.

\subsection{The moment problem over increasing failure rate distributions}\label{ss:ifr-problem}

Consider problem (DMP) with $\mathcal P = \mathcal P_{\text{IFR}}$. The following result illustrates a tight connection between the IFR case and the LC case. This result is known in the continuous case (see \cite[Chapter~2]{barlow1996mathematical}), we provide details for the discrete analogue that is the focus of this paper.


\begin{lemma}\label{lemma: IFR-tail-definition}
A distribution $x = (x_1,\dots,x_n)$ has an increasing failure rate if and only if its tail probability sequence $\{\bar F_1,
\dots, \bar F_n\}$ is log-concave, where $\bar F_j = \sum_{k=j}^n x_k$.
\end{lemma}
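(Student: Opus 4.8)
The plan is to prove the equivalence by unwinding the definition of IFR in terms of the failure rate sequence $r_j = x_j / \bar F_j$ and relating it directly to the log-concavity of the tail sequence $\{\bar F_1, \dots, \bar F_n\}$. Note that $x_j = \bar F_j - \bar F_{j+1}$ (with the convention $\bar F_{n+1} = 0$), so $r_j = 1 - \bar F_{j+1}/\bar F_j$. Since $\bar F_j$ is nonincreasing and positive on the support, the ratio $\bar F_{j+1}/\bar F_j \in [0,1]$, and $r_j$ is non-decreasing in $j$ if and only if the ratio $\rho_j := \bar F_{j+1}/\bar F_j$ is non-increasing in $j$. Thus the IFR property is equivalent to $\rho_{j+1} \le \rho_j$ for all relevant $j$, i.e. $\bar F_{j+2}/\bar F_{j+1} \le \bar F_{j+1}/\bar F_j$, which rearranges to $\bar F_j \bar F_{j+2} \le \bar F_{j+1}^2$ — precisely the log-concavity inequality (condition (ii) of \cref{definition:log-concave}) for the sequence $\{\bar F_j\}$.

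First I would fix the support: since $x$ is a distribution, let $[1,\ell]$ (after discarding any leading zeros, or more carefully letting $k$ be the first index with $x_k > 0$) be such that $\bar F_j > 0$ exactly for $j \le \ell$ and $\bar F_j = 0$ for $j > \ell$. One subtlety is whether the support of $x$ is forced to be consecutive — for the IFR definition to be sensible (so that $r_j$ is well-defined and the "non-decreasing" condition is the intended one) one typically restricts to distributions whose support is an initial segment or at least handles the zero entries by convention; I would state this convention explicitly at the start. On this support the algebra above is valid since all denominators are positive, and for $j > \ell$ both sides of the desired log-concavity inequality $\bar F_{j-1}\bar F_{j+1} \le \bar F_j^2$ are handled by the consecutive-support condition (i) on the tail sequence, which holds automatically because $\{\bar F_j\}$ is nonincreasing and hence its positive entries form an initial segment.

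The two directions then follow symmetrically from the chain of equivalences: (IFR) $\iff$ $r_j \le r_{j+1}$ for all $j$ $\iff$ $\rho_j \ge \rho_{j+1}$ for all $j$ (since $r_j = 1 - \rho_j$) $\iff$ $\bar F_{j-1}\bar F_{j+1} \le \bar F_j^2$ for all $j$ $\iff$ $\{\bar F_j\}$ is log-concave. I would also verify the boundary case at $j = \ell$: there $\bar F_{\ell+1} = 0$, so $\rho_\ell = 0$, which is automatically $\le \rho_{\ell-1}$, matching $r_\ell = 1$ being the largest possible failure rate; and the log-concavity inequality at $j = \ell$ reads $\bar F_{\ell-1} \cdot 0 \le \bar F_\ell^2$, trivially true.

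The main obstacle — really the only place requiring care rather than routine algebra — is bookkeeping around the support and the zero tail entries: making sure the failure rate $r_j$ is well-defined (no division by zero), that "non-decreasing $r_j$" is interpreted correctly at and beyond the last support point, and that condition (i) of log-concavity for $\{\bar F_j\}$ (consecutive support) is not an extra hypothesis but comes for free from monotonicity of the tail function. Once these conventions are pinned down, the equivalence is a one-line rearrangement in each direction.
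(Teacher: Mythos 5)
Your proof is correct and takes essentially the same route as the paper: both reduce the IFR condition and the tail log-concavity inequality to the same adjacent inequality (the paper by cross-multiplying $r_j \le r_{j+1}$ into $x_j \bar F_{j+1} \le x_{j+1}\bar F_j$ and expanding $\bar F_{j-1}\bar F_{j+1}\le \bar F_j^2$, you via the identity $r_j = 1 - \bar F_{j+1}/\bar F_j$ and monotonicity of the ratio $\bar F_{j+1}/\bar F_j$), plus the trivial boundary case. Your bookkeeping about supports and zero denominators is sound and, if anything, more careful than the paper's, which sidesteps division by working entirely with the cross-multiplied form.
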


In the IFR case, (DMP) becomes
%
%
\begin{subequations}\label{eq:DMP-IFR}
\begin{align}
&&\max_{x \in \R^n} \  & \sum_{j=1}^n f_j x_j
\qquad\qquad\qquad\qquad\qquad\qquad\qquad\qquad\qquad \\
(\text{DMP-IFR}) && \text{s.t. } & \sum_{j=1}^n w_j^i x_j = q_i \text{ for } i \in [0, m] \\
            && & \tfrac{x_j}{\sum_{k=j}^n x_k} \text{ is non-decreasing in $i$.} \label{eq:monotonicity-non-tail}
\end{align}
\end{subequations}
Using the transformation described in \cref{lemma: IFR-tail-definition}, where $y_j = \sum_{k=j}^n x_k$ denotes tail probabilities, we can reformulate \eqref{eq:DMP-IFR} as
\begin{subequations}\label{eq:DMP-IFR-PRIME}
\begin{align}
&&\max_{y \in \R^n} \  & \sum_{j=1}^n f_j (y_j - y_{j+1})
\qquad\qquad\qquad\qquad\qquad\qquad\qquad\qquad\qquad \\
(\text{DMP-IFR'}) && \text{s.t. } & \sum_{j=1}^n (w_j^i-w_{j-1}^i) y_j = q_i \text{\ \ for } i \in [0, m]  \\
            && & y_{j-1}y_{j+1} \le y_j^2 \text{\ \ for } j \in (1, n) \label{eq:ifr-prime-logconcave}\\
            && & y_j - y_{j+1} \ge 0 \text{\ \ for } j \in [1, n) \label{eq:ifr-prime-monotonicity}\\
            && & y_j \ge 0 \text{\ \ for } j \in [1, n]
\end{align}
\end{subequations}
where $f_{0},\ w_{0}^j$ are set to $0$. Constraint \eqref{eq:ifr-prime-logconcave} captures the log-concavity of the tail probabilities and \eqref{eq:ifr-prime-monotonicity} captures the non-increasing property of tail probabilities. There is no need to consider an outer optimization over supports and use strict inequalities to capture the property of consecutive supports. The consecutiveness of supports is immediate from the monotonicity condition of the $y_j$. Indeed, once $y_j = 0$ for some $j$ then $y_k = 0$ for all $k > j$ by monotonicity.

\section{A special class of nonconvex optimization problems}\label{s:optimization-theory}

In this section we present a general class of problems that includes all the problems introduced in \Cref{s:problem} as special cases. This class admits optimal extreme point solutions that are determined by setting a sufficient number of inequalities to equalities. This result is reminiscent of linear programming where extreme points have algebraic characterizations as basic feasible solutions.

Our analysis proceeds in two stages. First, we discuss a broad class of optimization problems that have  optimal extreme point solutions. 
Second, we specialize this general class to a class of nonconvex optimization problems where the source of nonconvexity arises from reverse convex sets (see \cref{definition:reverse-convex}). This work extends some of theory on reverse convex optimization, initiated by \cite{hillestad1980reverse} but tailors these results to the discrete moment problem. To our knowledge, these results are not subsumed by others in the existing literature.

\subsection{Linear optimization over (nonconvex) compact sets}\label{ss:linear-optimization-over-compact-sets}

Let us first consider a very general optimization problem:
\begin{align}\label{eq:linear-optimization-problem}
\begin{split}
\min \ & c(x) \\
\text{ s.t. } & x \in S
\end{split}
\end{align}
where $c$ is a lower semicontinuous and quasiconcave function and $S$ is nonempty and compact (closed and bounded) subset of $\R^n$. It is worthwhile to note that the results in this section can be generalized to any locally convex topological vector space in the sense of \citet[Chapter~5]{hitchhiker}. This is not required for the study of the discrete moment problem, but is potentially relevant for an exploration of the continuous case that follows a similar line of inquiry.

The goal of this subsection is to prove the following:

\begin{theorem}\label{theorem:restrict-attention-to-extreme-points}
There exists an optimal solution to \eqref{eq:linear-optimization-problem} that is an extreme point of $S$.
\end{theorem}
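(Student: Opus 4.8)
## Proof proposal for Theorem~\ref{theorem:restrict-attention-to-extreme-points}

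\textbf{Overall approach.} The plan is to combine three classical facts: (1) a lower semicontinuous function on a nonempty compact set attains its minimum, (2) a compact set in $\R^n$ has extreme points and equals the closed convex hull of its extreme points (a consequence of Krein--Milman, or more elementarily of Minkowski's theorem in finite dimensions), and (3) a quasiconcave function attains its minimum over a convex hull at an extreme point of the generating set. So first I would establish that the optimal value $v^\ast := \min_{x \in S} c(x)$ is attained, say at some $x^\ast \in S$. Then I would pass to the compact convex set $K := \cch S = \cl(\ch S)$ and argue that $v^\ast$ is still the optimal value of $\min_{x \in K} c(x)$, and finally that this minimum over $K$ is attained at an extreme point of $K$, which (since $\ext K \subseteq S$) is the desired extreme point of $S$.

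\textbf{Key steps in order.}
\begin{enumerate}[(1)]
\item Since $S$ is nonempty and compact and $c$ is lower semicontinuous, $c$ attains its infimum on $S$; call the minimizer $x^\ast$ and the value $v^\ast = c(x^\ast)$.
\item Form $K = \cch S$. In $\R^n$, $K$ is compact (Carath\'eodory plus closedness of $\ch$ of a compact set, or take closure) and convex. By the Krein--Milman theorem (or Minkowski's theorem in finite dimension), $K = \cch(\ext K)$, and moreover $\ext K \subseteq S$ because the extreme points of the closed convex hull of a compact set lie in that set.
\item Show $\min_{x \in K} c(x) = v^\ast$. The inequality $\min_K c \le v^\ast$ is clear since $S \subseteq K$. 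For the reverse, take any $x \in K$; since $c$ is quasiconcave, its sublevel set $L = \{x : c(x) \le v^\ast - \varepsilon\}$ is convex for each $\varepsilon$; one argues that if some point of $K$ had value strictly below $v^\ast$, then — writing that point as a limit of convex combinations of points of $S$, and using quasiconcavity to bound $c$ on convex combinations below by the minimum of $c$ over the combined points, together with lower semicontinuity in the limit — we would get a point of $S$ with value $< v^\ast$, a contradiction. This gives $c(x) \ge v^\ast$ for all $x \in K$.
\item Now minimize $c$ over $K$. The minimum $v^\ast$ is attained on $K$ (lower semicontinuity + compactness). Let $M = \{x \in K : c(x) = v^\ast\}$, a nonempty compact set. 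I claim $M$ contains an extreme point of $K$: take any $\bar x \in M$; write $\bar x$ as a limit of convex combinations $\sum_i \lambda_i^{(t)} e_i^{(t)}$ of extreme points $e_i^{(t)} \in \ext K$; quasiconcavity gives $c\bigl(\sum_i \lambda_i^{(t)} e_i^{(t)}\bigr) \ge \min_i c(e_i^{(t)})$, hence some extreme point in each combination has value $\le c$ of the combination, which tends to $v^\ast$; since each $e_i^{(t)} \in S$ and $v^\ast$ is the minimum over $S$, in fact $c(e_i^{(t)}) = v^\ast$ whenever $c(e_i^{(t)}) \le v^\ast$, pinning a sequence of extreme points at value $v^\ast$; a compactness/limit argument then yields an extreme point of $K$ at value $v^\ast$. (Alternatively, more cleanly: the face $F$ of $K$ exposed — or the minimal face containing a minimizer — is itself compact convex, hence has an extreme point, and extreme points of a face of $K$ are extreme points of $K$.)
\item Since that extreme point of $K$ lies in $S$ and achieves value $v^\ast$, it is an optimal solution to \eqref{eq:linear-optimization-problem} that is an extreme point of $S$ (note extreme points of $K$ that lie in $S$ are a fortiori extreme points of $S$).
\end{enumerate}

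\textbf{Main obstacle.} The delicate point is Step~(3)/(4): handling the interaction between the closure operation in $\cch S$ and lower semicontinuity. Quasiconcavity of $c$ controls $c$ on finite convex combinations of points of $S$ (giving a lower bound by the minimum over those points, hence $\ge v^\ast$), but passing to limits requires lower semicontinuity in the right direction — and one must be careful that lower semicontinuity gives $c(\lim x_t) \le \liminf c(x_t)$, which is the \emph{unhelpful} direction for a lower bound. The resolution is that we only need the lower bound $c \ge v^\ast$ on all of $K$, which follows purely from quasiconcavity on $\ch S$ (no limits needed: every point of $\ch S$ is a genuine finite convex combination), and then separately handle points of $\cl(\ch S) \setminus \ch S$; for those, the cleanest route is to avoid the issue entirely by working with the minimal face of $K$ containing a minimizer $\bar x$ and invoking that any compact convex set has an extreme point, rather than trying to push a minimizing sequence of extreme points through a limit. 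I expect the bulk of the writing to go into making Step~(4) rigorous, and I would lean on the face-based argument to sidestep the lower-semicontinuity subtlety.
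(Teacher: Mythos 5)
Your overall architecture is the same as the paper's: pass to $K=\cch S$, use that $\ext K \subseteq \ext S$ with $\ext K\subseteq S$, and apply a Bauer-type principle on the compact convex set $K$. But two of your steps have problems. Step (3) is both shaky and unnecessary: shaky because, as you yourself note, lower semicontinuity points the wrong way for a lower bound on $\cl(\ch S)\setminus \ch S$; unnecessary because the paper orders the steps so the issue never arises. One first obtains an extreme-point minimizer $x^*$ of $c$ over $K$ (the convex case, \cref{lemma:s-convex}, i.e.\ Corollary~7.75 of Aliprantis--Border), notes $x^*\in S$ by \cref{lemma:extreme-points-in-set}, and then the sandwich $c(x^*)=\min_{K} c \le \min_{S} c \le c(x^*)$ forces $x^*$ to be optimal for $S$; no lower bound for $c$ on all of $K$ is ever needed. (Also, in $\R^n$ the convex hull of a compact set is already compact by Carath\'eodory, so $\cl(\ch S)\setminus\ch S=\emptyset$ and the closure subtlety you wrestle with is vacuous in the theorem's setting, though the paper's argument also survives in infinite dimensions.)

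The genuine gap is in Step (4), where you try to re-derive the Bauer-type principle instead of citing it. The limit-of-convex-combinations sketch does not close: the set of extreme points of $K$ need not be closed (already in $\R^3$), so a limit of extreme points at value $v^*$ lands in $S$ but need not be extreme, and extremality is exactly what the theorem demands. The face-based alternative as stated --- ``the minimal face containing a minimizer is compact convex, hence has an extreme point, and extreme points of a face are extreme points of $K$'' --- does not show that any extreme point of that face is a \emph{minimizer}; the minimizer lies in the relative interior of its minimal face, and you still need quasiconcavity along segments through that point together with an induction on the dimension of the face (or simply the citation of Bauer's maximum principle, as the paper does via \cref{lemma:s-convex}) to push the minimum out to an extreme point. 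With Step (4) replaced by that citation and Step (3) deleted in favor of the sandwich argument, your proof coincides with the paper's.
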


Recall that an \emph{extreme point} of $S$ is any point $x \in S$ where the set of $d$ such that $x \pm \epsilon d \in S$ for some $\epsilon >0$ is empty. Let $\ext S$ denote the extreme points of the set $S$. The special case to \cref{theorem:restrict-attention-to-extreme-points} where $S$ is convex well-known and immediate from \citet[Corollary~7.75]{hitchhiker}:
\begin{lemma}\label{lemma:s-convex}
If $S$ is compact and convex then \eqref{eq:linear-optimization-problem} has an  optimal extreme point solution.
\end{lemma}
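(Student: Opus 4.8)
The plan is to recognize \cref{lemma:s-convex} as an instance of the Bauer-type minimum principle for quasiconcave lower semicontinuous functions over compact convex sets, which is precisely \citet[Corollary~7.75]{hitchhiker}; so the shortest proof is simply to cite that result. For completeness I would give the following self-contained argument, which also makes transparent where each hypothesis is used.

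\emph{Step 1 (the optimal set is nonempty and compact).} Since $c$ is lower semicontinuous and $S$ is nonempty and compact, $c$ attains its infimum on $S$. Put $v^\ast = \min_{x \in S} c(x)$ and $M = \{x \in S : c(x) = v^\ast\}$. Because $M = \{x \in S : c(x) \le v^\ast\}$ and sublevel sets of a lower semicontinuous function are closed, $M$ is a closed subset of the compact set $S$; hence $M$ is nonempty and compact. \emph{Step 2 ($M$ has an extreme point).} Let $x_0$ be a point of $M$ of maximum Euclidean norm, which exists by compactness of $M$. If $x_0 = \tfrac12(y+z)$ with $y,z \in M$ and $y \ne z$, then $\|x_0\|^2 = \tfrac12(\|y\|^2 + \|z\|^2) - \tfrac14\|y-z\|^2 < \max\{\|y\|^2,\|z\|^2\}$, contradicting maximality of $\|x_0\|$; thus $x_0 \in \ext M$. (In the locally convex generalization mentioned in the text one replaces this elementary observation by the Krein--Milman machinery collected in \cite{hitchhiker}, which is where convexity of $S$ is needed.)

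\emph{Step 3 ($x_0$ is an extreme point of $S$).} Suppose not. Then there are a nonzero $d$ and an $\epsilon > 0$ with $y := x_0 + \epsilon d \in S$ and $z := x_0 - \epsilon d \in S$, so $x_0 = \tfrac12(y+z)$ and $y \ne z$. Quasiconcavity of $c$ gives $c(x_0) \ge \min\{c(y), c(z)\}$, while $c(y) \ge v^\ast$ and $c(z) \ge v^\ast$ since $v^\ast$ is the minimum of $c$ over $S$. As $c(x_0) = v^\ast$, this forces $c(y) = c(z) = v^\ast$, i.e.\ $y, z \in M$; then $x_0 = \tfrac12(y+z)$ contradicts $x_0 \in \ext M$ from Step 2. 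Hence $x_0 \in \ext S$, and since $x_0 \in M$ it is an optimal solution of \eqref{eq:linear-optimization-problem}.

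The only step carrying any content is the existence of an extreme point of the (a priori nonconvex) compact set $M$ in Step 2; over $\R^n$ this is immediate, and it is exactly the point at which the cited corollary supplies the abstract argument in the infinite-dimensional version. The other two steps are routine consequences of lower semicontinuity and quasiconcavity, so I do not expect a genuine obstacle here — the lemma is essentially a packaging of a classical fact, included to set up the harder nonconvex \cref{theorem:restrict-attention-to-extreme-points}.
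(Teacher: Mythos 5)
Your primary route---citing \citet[Corollary~7.75]{hitchhiker}---is exactly the paper's proof, which consists of precisely that citation, so for that part there is nothing to add. The problem is the ``self-contained'' argument you offer for completeness: Step~3 contains a genuine error, and it is not a local slip that can be patched within your strategy. From $c(x_0) \ge \min\{c(y),c(z)\}$ and $c(y),c(z) \ge v^\ast = c(x_0)$ you may only conclude $\min\{c(y),c(z)\} = v^\ast$, i.e.\ that \emph{at least one} endpoint lies in $M$; quasiconcavity does not force both. With only one endpoint in $M$ there is no contradiction with $x_0 \in \ext M$. Moreover, the underlying strategy (``the maximum-norm minimizer is an extreme point of $S$'') is actually false for merely quasiconcave $c$. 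Take $S = [0,10]\times[0,1] \subseteq \R^2$ and $c(x_1,x_2) = \max\{0,\, x_1 - 5\}$, which is continuous and quasiconcave (its upper level sets are $S$ or $\{x_1 \ge 5+\alpha\}\cap S$, both convex). Then $v^\ast = 0$, $M = \{x \in S : x_1 \le 5\}$, and the unique maximum-norm point of $M$ is $x_0 = (5,1)$, which is \emph{not} an extreme point of $S$: it is the midpoint of $y=(4,1) \in M$ and $z=(6,1) \in S\setminus M$, with $c(z) = 1 > v^\ast$, exactly the configuration your Step~3 rules out without justification. The lemma itself still holds in this example (e.g.\ $(0,0)$ is an optimal extreme point), but your construction does not locate such a point.

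Your argument would be correct if $c$ were \emph{concave}: then $v^\ast = c(x_0) \ge \tfrac12 c(y) + \tfrac12 c(z)$ forces $c(y)=c(z)=v^\ast$, so $M$ is an extremal subset of $S$ and any extreme point of $M$ (your max-norm point, Step~2, which is fine) is extreme in $S$. For quasiconcave $c$ an elementary finite-dimensional proof needs a different mechanism, e.g.\ induction on the dimension of faces: if a minimizer is not extreme, extend a chord through it maximally within $S$; quasiconcavity places at least one endpoint of the chord in $M$, that endpoint lies on a proper face of $S$ (a compact convex set of lower dimension on which the minimum is again $v^\ast$), and one recurses, noting that extreme points of a face are extreme points of $S$. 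Alternatively, simply leave the proof as the citation, which is what the paper does.
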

The proof when $S$ is not convex takes a couple more steps. The first step is to work with the closed convex hull $\cch S$ of $S$, which is the intersection of all closed convex sets that contain $S$.

\begin{lemma}(Theorem 5.3 in \cite{hitchhiker})\label{lemma:closed-convex-full-compact}
The closed convex hull of a compact set is compact. In particular, $\cch S$ is a compact convex set.
\end{lemma}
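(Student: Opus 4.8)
The plan is to reduce to Carath\'eodory's theorem, which is available because the ambient space is the finite-dimensional space $\R^n$. Recall that Carath\'eodory's theorem gives
\[
\ch S = \Bigl\{ \textstyle\sum_{i=0}^{n} \lambda_i x_i \;:\; \lambda_i \ge 0,\ \textstyle\sum_{i=0}^n \lambda_i = 1,\ x_i \in S \text{ for } i \in [0,n] \Bigr\};
\]
that is, every point of $\ch S$ is a convex combination of at most $n+1$ points of $S$. Write $\Delta = \{\lambda \in \R^{n+1}_+ : \sum_{i=0}^n \lambda_i = 1\}$ for the standard $n$-simplex and consider the evaluation map $\phi : \Delta \times S^{n+1} \to \R^n$ given by $\phi(\lambda, x_0, \dots, x_n) = \sum_{i=0}^n \lambda_i x_i$.

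First I would observe that $\phi$ is continuous, since in coordinates it is a polynomial (in fact bilinear) expression in its arguments. Next, its domain $\Delta \times S^{n+1}$ is compact: $\Delta$ is closed and bounded in $\R^{n+1}$, $S$ is compact by hypothesis, a finite product of compact sets is compact, so $S^{n+1}$ is compact, and hence so is the product $\Delta \times S^{n+1}$. The continuous image of a compact set is compact, so $\phi(\Delta \times S^{n+1})$ is a compact subset of $\R^n$. By the displayed identity this image is exactly $\ch S$, so $\ch S$ is compact.

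It remains to identify $\ch S$ with $\cch S$. A compact subset of $\R^n$ is closed, so $\ch S$ is a closed convex set containing $S$; by the defining property of $\cch S$ as the intersection of all such sets, $\cch S \subseteq \ch S$. The reverse inclusion $\ch S \subseteq \cch S$ is immediate because $\cch S$ is convex and contains $S$. Therefore $\cch S = \ch S$ is compact and convex, as claimed.

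The step I expect to be the main obstacle — or rather the step that requires care — is the compactness of the domain of $\phi$, which hinges on the uniform bound $n+1$ from Carath\'eodory's theorem; it is precisely this finite-dimensional feature that the argument exploits. In the general locally convex topological vector space setting alluded to after \eqref{eq:linear-optimization-problem} one cannot bound the number of terms, and instead argues that the closed convex hull of a totally bounded set is totally bounded and then invokes completeness. Since the discrete moment problem lives in $\R^n$, the elementary argument above suffices.
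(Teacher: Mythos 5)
Your proof is correct. The point of comparison is that the paper does not prove this lemma at all: it simply cites Theorem~5.3 of Aliprantis and Border, which is a general statement about closed convex hulls of compact sets in (completely metrizable) locally convex spaces. You instead give a self-contained, purely finite-dimensional argument: Carath\'eodory's theorem bounds the number of points needed in a convex combination by $n+1$, so $\ch S$ is the continuous image of the compact set $\Delta \times S^{n+1}$ and hence compact, and since compact subsets of $\R^n$ are closed you get the slightly stronger fact that $\ch S = \cch S$ for compact $S \subseteq \R^n$ (in general the convex hull of a compact set need not even be closed outside finite dimensions, so this identification is genuinely a finite-dimensional bonus). What the citation buys the authors is exactly what your argument cannot deliver: the remark after \eqref{eq:linear-optimization-problem} says the results of that subsection generalize to locally convex topological vector spaces, and for that one needs the infinite-dimensional version of the lemma, where the proof proceeds via total boundedness and completeness rather than Carath\'eodory, as you correctly note. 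For the discrete moment problem itself, which lives in $\R^n$, your elementary route is fully adequate and arguably more transparent.
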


The following lemma helps us to leverage these results about closed convex hulls to learn about the original problem \eqref{eq:linear-optimization-problem}.

\begin{lemma}\label{lemma:extreme-points-in-set}
Let $S$ be a compact subset of $\R^n$. Then $\ext \cch S \subseteq \ext S$.
\end{lemma}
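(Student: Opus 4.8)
The claim is that $\ext \cch S \subseteq \ext S$ for a compact set $S \subseteq \R^n$. The plan is to prove the contrapositive-style statement directly: take $e \in \ext \cch S$ and show two things — first that $e \in S$, and second that $e$ is an extreme point of $S$ (not merely a member). First I would establish that $e \in S$. This follows from the fact that every extreme point of $\cch S$ must already lie in $S$: by Carath\'eodory's theorem (or the Krein–Milman-type fact that $\cch S = \overline{\ch}(S)$ and extreme points of the closed convex hull of a compact set are limits of convex combinations of points of $S$), if $e$ were not in $S$ it could be written as a proper convex combination of other points of $\cch S$, contradicting extremality. More carefully, since $\cch S$ is compact and convex (by \cref{lemma:closed-convex-full-compact}) and $e$ is extreme, $e$ cannot be in the interior of any segment with endpoints in $\cch S$; combined with the standard fact that $\ext \cch S \subseteq S$ whenever $S$ is closed, we conclude $e \in S$.

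For the second part, suppose $e \in S$ is an extreme point of $\cch S$ but is \emph{not} an extreme point of $S$. Then there exists a direction $d \neq 0$ and $\epsilon > 0$ with $e + \epsilon d \in S$ and $e - \epsilon d \in S$. But $S \subseteq \cch S$, so both $e + \epsilon d$ and $e - \epsilon d$ lie in $\cch S$, and $e = \tfrac12(e+\epsilon d) + \tfrac12(e - \epsilon d)$ is then the midpoint of a nondegenerate segment inside $\cch S$, contradicting the assumption that $e$ is an extreme point of $\cch S$. Hence $e$ must be an extreme point of $S$, and so $\ext \cch S \subseteq \ext S$.

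The main obstacle is the first part — showing $e \in S$ at all — since the definition of extreme point used in the paper is the "no admissible direction" formulation rather than the "not a proper convex combination" formulation, and a priori an extreme point of $\cch S$ could be a boundary point of $\cch S$ that is not in $S$ (e.g., a point on a face of the convex hull). The key input here is compactness of $S$: I would invoke the standard result that for a compact set $S$, $\ext \cch S \subseteq S$ — one way to see this is that $\cch S$ equals the closed convex hull, and by a Milman-type partial converse to Krein–Milman, the extreme points of the closed convex hull of a compact set are contained in (the closure of) $S$, which equals $S$. Alternatively, one can argue via Carath\'eodory: any point of $\cch S$ is a limit of convex combinations of at most $n+1$ points of $S$; passing to a subsequence using compactness of $S$, an extreme point must be a single such point, hence in $S$. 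Once $e \in S$ is secured, the rest is the short segment argument above. I would present the Carath\'eodory/compactness argument explicitly rather than citing a Milman converse, to keep the proof self-contained given the tools already introduced.
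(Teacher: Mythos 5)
Your proposal is correct and takes essentially the same route as the paper: you split the claim into (i) $\ext \cch S \subseteq S$ and (ii) a short segment/midpoint contradiction showing an extreme point of $\cch S$ that lies in $S$ must be extreme in $S$, which is exactly the paper's argument (the paper disposes of (i) by citing Klee's theorem, whereas you offer to prove it via Carath\'eodory plus compactness --- a fine self-contained variant in $\R^n$, where $\ch S$ of a compact set is already closed so no limiting/subsequence step is even needed). No gaps.
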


We prove \cref{theorem:restrict-attention-to-extreme-points} using \cref{lemma:closed-convex-full-compact,lemma:extreme-points-in-set} in Section \ref{sec:proof_restrict-attention-to-extreme-points} of the online supplement.

\subsection{Reverse convex optimization problem with nonnegative constraints}
\label{ss:application_to_noncave_minimization}

The following lemma captures the essence of reverse convex optimization and serves as motivation and a visualization tool for understanding our main theoretical result below (see \cref{Thm:general-nonconvex}). 

\begin{lemma}\label{lemma:boundary-conditions}
Consider the optimization problem
\begin{align*}
\min_{x \in \R^n} \ &  c(x) \\
s.t. \ & x \in R_p, \text{ for } p \in [1,P]
\end{align*}
where $c$ is a lower semicontinuous and quasiconcave function, $P \ge n$, and the $R_p$ are closed, reverse convex sets such that $X := \cap_p R_p$ is a nonempty and compact subset of $\R^n$. Then there exists an optimal solution that lies on the boundary of at least $n$ of the sets $R_p$.
\end{lemma}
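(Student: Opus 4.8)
The plan is to reduce the statement to \cref{theorem:restrict-attention-to-extreme-points} by showing that every extreme point of $X=\cap_p R_p$ lies on the boundary of at least $n$ of the sets $R_p$. Since $c$ is lower semicontinuous and quasiconcave and $X$ is nonempty and compact, \cref{theorem:restrict-attention-to-extreme-points} already guarantees an optimal solution $x^\ast$ that is an extreme point of $X$; so it suffices to establish the geometric claim about extreme points.

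First I would set up the contrapositive. Suppose $x^\ast\in X$ lies on the boundary of fewer than $n$ of the $R_p$; relabel so that $x^\ast\in\bd R_p$ for $p\in[1,t]$ with $t<n$, and $x^\ast\in\intt R_p$ for $p\in[t+1,P]$. For each $p\in[1,t]$, write $R_p=\R^n\setminus S_p$ with $S_p$ convex, so $\cl(S_p)$ is a closed convex set not containing $x^\ast$ in its interior (indeed $x^\ast\in\bd R_p$ forces $x^\ast\in\bd S_p\subseteq\cl(S_p)$, or more precisely $x^\ast\notin\intt(\cl S_p)$). By the supporting hyperplane theorem there is a nonzero $a_p\in\R^n$ with $\langle a_p, y\rangle \le \langle a_p, x^\ast\rangle$ for all $y\in\cl(S_p)$; equivalently, the open halfspace $H_p^\circ=\{y:\langle a_p,y\rangle>\langle a_p,x^\ast\rangle\}$ is contained in $R_p$. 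Now the key linear-algebra step: since $t<n$, the $t$ vectors $a_1,\dots,a_t$ do not span $\R^n$, so there exists $d\ne 0$ with $\langle a_p,d\rangle=0$ for all $p\in[1,t]$.

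The hard part — and the step I would spend the most care on — is verifying that $x^\ast\pm\epsilon d\in X$ for small $\epsilon>0$, contradicting extremality. For $p\in[t+1,P]$ this is immediate: $x^\ast\in\intt R_p$, so small perturbations stay in $R_p$. For $p\in[1,t]$ the argument is more delicate because $d$ is only orthogonal to the supporting direction, not pointing strictly into $R_p$. The resolution is that membership in a \emph{closed} reverse convex set $R_p=\cl(R_p)$ can be checked along a curve: since $x^\ast\in\bd R_p\subseteq R_p$ and the open halfspace $H_p^\circ\subseteq R_p$, I would perturb slightly inward first — pick a point $z\in\intt R_p$ near $x^\ast$ (exists because $x^\ast\notin\intt S_p$, so arbitrarily close points lie outside $\cl S_p$, hence in $\intt R_p$) and then argue $z\pm\epsilon d$ stays in $R_p$, finally taking $z\to x^\ast$. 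Actually the cleanest route is: because $H_p^\circ\subseteq R_p$ and $R_p$ is closed, the set $R_p$ contains $\cl(H_p^\circ)=\{y:\langle a_p,y\rangle\ge\langle a_p,x^\ast\rangle\}$; since $\langle a_p,x^\ast\pm\epsilon d\rangle=\langle a_p,x^\ast\rangle$, the points $x^\ast\pm\epsilon d$ lie in this closed halfspace, hence in $R_p$, for \emph{every} $\epsilon$. Combining with the $p\in[t+1,P]$ case, $x^\ast\pm\epsilon d\in X$ for all sufficiently small $\epsilon>0$, so $x^\ast$ is not an extreme point of $X$ — contradiction.

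Therefore every extreme point of $X$ lies on the boundary of at least $n$ of the $R_p$, and in particular the optimal extreme point furnished by \cref{theorem:restrict-attention-to-extreme-points} does; this proves the lemma. The one technical point to double-check is that $x^\ast\in\bd R_p$ genuinely yields a supporting hyperplane to $\cl(S_p)$ at $x^\ast$ even when $S_p$ has empty interior or $x^\ast$ is far from $\cl(S_p)$ — but in the degenerate case $x^\ast\notin\cl(S_p)$ one can separate the point from the compact-enough convex set outright, or simply note $\bd R_p = \bd(\cl S_p) \subseteq \cl S_p$, so $x^\ast\in\cl(S_p)$ and the supporting hyperplane theorem applies directly.
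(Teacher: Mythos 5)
Your proof is correct, and it takes a genuinely different route from the paper's. Both arguments begin with \cref{theorem:restrict-attention-to-extreme-points} to obtain an optimal extreme point $x^*$, and both use supporting hyperplanes for the convex complements $C_p=\R^n\setminus R_p$; but the paper constructs a hyperplane at the point of $\cl(C_p)$ nearest to $x^*$ for \emph{every} $p$, inscribes the resulting polyhedron $\hat X\subseteq X$ containing $x^*$, transfers extremality of $x^*$ from $X$ to $\hat X$, and then cites the basic-feasible-solution characterization of polyhedral extreme points \cite[Theorem~2.3]{bertsimas1997introduction}, observing that a constraint can be tight at $x^*$ only when $\dist(x^*,\cl(C_p))=0$, i.e., $x^*\in\bd(R_p)$. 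You instead prove the contrapositive directly: with $t<n$ supporting normals $a_p$ at $x^*$, a nonzero $d$ orthogonal to all of them satisfies $x^*\pm\epsilon d\in X$ for small $\epsilon$ --- remaining in each boundary-touching $R_p$ because a closed reverse convex set contains the entire closed halfspace $\{y:\langle a_p,y\rangle\ge\langle a_p,x^*\rangle\}$ beyond a supporting hyperplane of its complement, and remaining in the other $R_p$ because $x^*$ is interior to them --- contradicting extremality. Your version is more elementary and self-contained (no nearest-point projection, no appeal to the polyhedral theorem, whose relevant direction you in effect re-prove by hand), and the closed-halfspace observation neatly sidesteps any worry about moving tangentially along a curved boundary. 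What the paper's construction buys is a template reused in the proof of \cref{Thm:general-nonconvex}, where the inscribed polyhedron also yields \emph{linearly independent} tight constraints, information your counting argument does not provide but which this lemma does not require. Two cosmetic points: the detour through an interior point $z$ near $x^*$ is superfluous once you pass to $\cl(H_p^\circ)\subseteq R_p$, and the separation step is cleanest phrased as $x^*\notin S_p$ together with $\intt(\cl S_p)=\intt(S_p)$ for convex $S_p$, so the point--convex-set separation theorem applies at $x^*$.
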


\cref{lemma:boundary-conditions} extracts some ideas from existing results (particularly from \cite[Theorem~2]{hillestad1980reverse}) and presents them in a clean, geometric form.
To facilitate the understanding of this lemma, we further provide an intuitive graphical illustration in Figure \ref{fig:general_reverse}.
\begin{figure}[htbp]
  \centering
  \includegraphics[width=7cm]{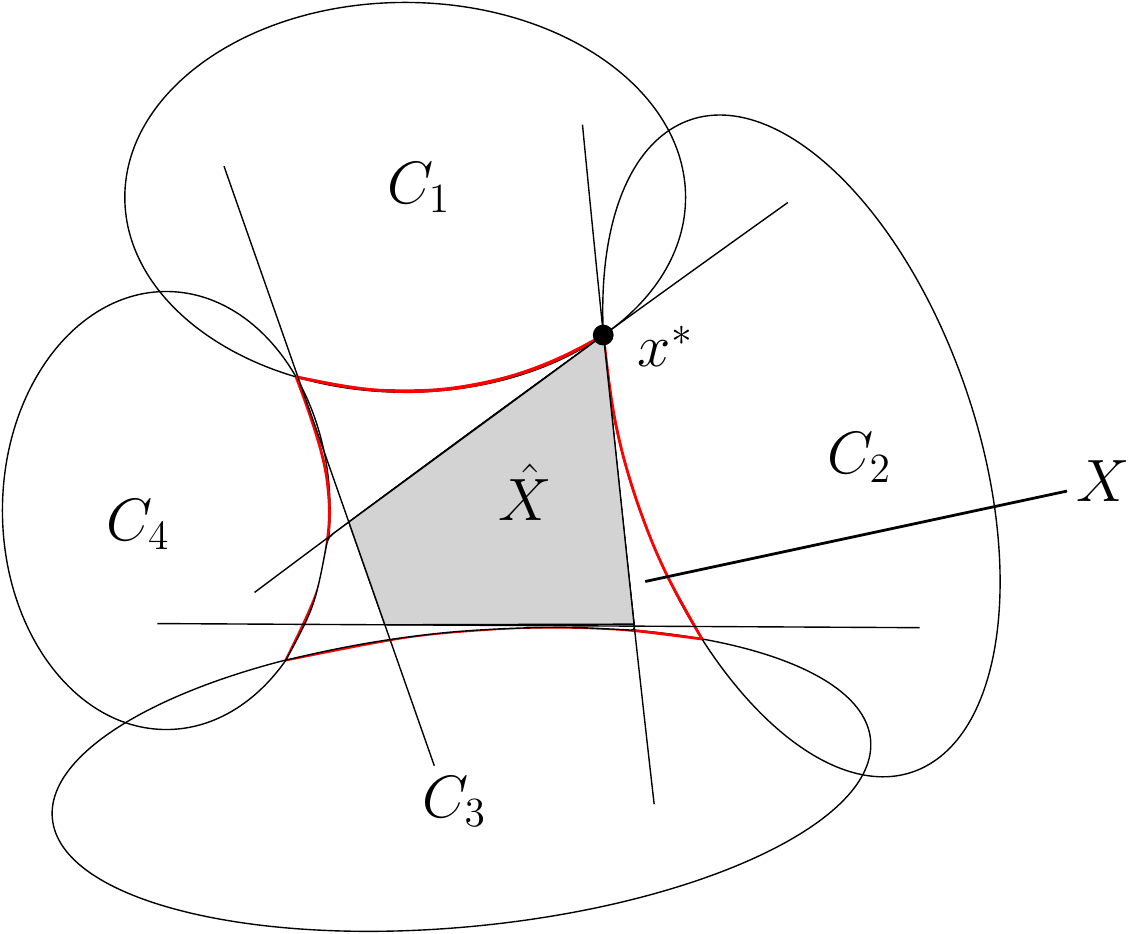}
  \caption{A illustration of the general reverse convex programming in $\mathbb R^2$. The feasible region $X$ is the intersection of several $R_p$, where each $R_p$ is the complement of a convex set $C_p$. After constructing the feasible polyhedron $\hat X$ (obtained via intersection of supporting hyperplanes of cl($C_p$) that weakly separate $x^*$),  we can show that the optimal extreme point solution $x^*$ is lies on the boundary of at least two of the sets $R_p$ using the theory of basic feasible solutions in linear programming.}
  \label{fig:general_reverse}
\end{figure}
Despite its elegance, this lemma is insufficient for our purposes. First, it
only applies when the $R_p$ are reverse convex. The argument breaks down if
the $R_p$ are reverse convex w.r.t. another convex set $S$, as
needed for the problems in \Cref{s:problem}.
In particular, when the convex set $S$ is a polytope, even though we can use
$R_p\cap S$ as a reverse convex set to replace $R_p$, it contains the
boundaries from the original polytope which are undesirable for analyzing the
extreme optimum solutions.
Second, the conclusion only provides a lower
bound on the number of boundaries an optimal solution lies on. Although
sufficient for the LC case, a strengthening that leverages the
concept of linear independence
--- familiar from the analogous linear programming result
\cite[Theorem~2.3]{bertsimas1997introduction} --- is needed for the IFR case.

As to the second insufficiency, 
a standard setting in reverse convex optimization is to consider a feasible region
\begin{equation*}
F = \left\{x \in \R^n : f_p(x) \le 0 \text{ for } p \in [1,P] \right\}
\end{equation*}
and assume properties on the functions $f_p$. These properties typically include differentiability assumptions (so that gradients are defined) and some form of concavity (the weakest being quasiconcavity). Under these concavity assumptions, the lower-level sets of $f_p$ are reverse convex and \cref{lemma:boundary-conditions} applies so that extreme points are determined by a minimum number of tight constraints of the form $f_p(x) = 0$. 
Unfortunately, those results do not apply in our setting. Indeed, the discrete moment problems we consider here does not involve quasiconcave functions, instead functions whose lower level sets are reverse convex \emph{w.r.t. the nonnegative orthant}.

These considerations motivate us to establish a more general theory of reverse convex optimization. In particular, we analyze the following problem
\begin{align*}\label{eq:reverse-convex-general}
\min \ & c(x)  \nonumber \\
\text{s.t.} \ & Ax = b  \nonumber \\
& f_p(x) \le 0 \text{ for } p \in [1,P] \tag{Rev-Cvx}\\
& x \ge 0, \nonumber
\end{align*}
where $c$ and the $f_p$ are functions from $\R^n$ to $\R$ and $A$ is an $m$ by $n$, and for $1 \leq p \leq P$, the set $\{x: f_p(x) \leq 0\}$ is reverse convex w.r.t. the nonnegative orthant $\R^n_+$.
\begin{assumption}\label{ass:assumptions-on-the-problem}
We make the following additional technical assumptions on \eqref{eq:reverse-convex-general}:
\begin{enumerate}[(i)]
\item The objective function $c(x)$ is continuous and quasiconcave,
\item The matrix $A$ is full-row rank,
\item For each $p$, $f_p$ is differentiable, and
\item The feasible region $X = \{ x \in \BR^n_+ : Ax = b,\; f_p(x) \le 0,\; p =1,\ldots,P \}$ is nonempty and compact.
\end{enumerate}
\end{assumption}
We also need the following notation to state the main theorem of this section. For any feasible solution $x$ to \eqref{definition:reverse-convex} let $S(x)$ denote the support of $x$; that is, $S(x) = \left\{j : x_j > 0\right\}$. Let $A^i$ denote the $i$-th row of the matrix $A$ and $A_j$ the $j$-th column.  For any subset $S$ of $[1,n]$ (for instance, the support of a feasible solution), let $A_{S} = [A_j]_{j \in S}$. That is, $A_S$ is the submatrix of $A$ consisting the columns indexed by $S$. Recall that $A^r_S$ denotes the $r$-th row of the matrix $A_S$ and let $\mathcal L(S) = \text{span} (\{(A_S^1)^\top, \dots, (A_S^m)^\top\}$ denote the span of the rows of $A_S$. Finally, let $\nabla f_p(x)$ denote the gradient of $f_p$ at $x$, where $[\nabla f_p(x)]_S$ is the gradient of $f_p$ restricted to the components in the subset $S$.

\begin{theorem}\label{Thm:general-nonconvex}
Consider an instance of \eqref{eq:reverse-convex-general} where \cref{ass:assumptions-on-the-problem} hold. Then there exists an optimal extreme point solution.

Moreover, for any extreme point optimal solution $x^*$, $n - m$ of the following $P+n$ inequalities
\begin{equation*}
\begin{array}{rll}
f_p(x^*) \le 0 & \text{ for }& p \in [1,P]\\
x^*_j \ge 0 & \text{ for } & j \in [1, n]
\end{array}
\end{equation*}
are tight.

 In addition, letting $S = S(x^*)$, if we further assume that for all the tight constraints $p$ with $f_p(x^*)  = 0$ one has $ [\nabla f_p(x^*)]_S \not\in \mathcal{L}(S) $, then there are $n -m$ of the vectors $\{ \nabla f_p(x^*) : f_p(x^*) = 0   \} \cup \{ e_j : x^*_j = 0\}$ are linearly independent, where $e_j$ is the unit vector with $1$ in the $j$th component and $0$ otherwise.
\end{theorem}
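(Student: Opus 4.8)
The plan is to treat the three assertions in turn, reducing everything to a ``basic feasible solution'' argument for an auxiliary polyhedron, carried out in the coordinates of the support of $x^*$. The existence of an optimal extreme point solution is immediate from \cref{theorem:restrict-attention-to-extreme-points}: under \cref{ass:assumptions-on-the-problem} the objective $c$ is continuous (hence lower semicontinuous) and quasiconcave, and $X$ is nonempty and compact, so some optimum is an extreme point of $X$. Now fix an arbitrary extreme point optimal $x^*$ and set $S = S(x^*)$. The key preliminary observation is that any direction $d$ with $x^*\pm\epsilon d\in X$ must have $d_j=0$ for $j\notin S$ (else $x^*_j\pm\epsilon d_j<0$). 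Hence $x^*_S$ is an extreme point of the restricted feasible set $X_S := \{ x_S\in\R^S_+ : A_S x_S = b,\ f_p(x_S)\le 0,\ p\in[1,P]\}$, where $f_p(x_S)$ means $f_p$ evaluated at the vector agreeing with $x_S$ on $S$ and $0$ off $S$. Working in $\R^{|S|}$ is convenient because every coordinate of $x^*_S$ is strictly positive, so the orthant boundary causes no trouble.

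Next I prove the counting statement. For $p$ with $f_p(x^*)=0$, set $G_p:=\{x_S\in\R^S_{++}: f_p(x_S)>0\}$, which is convex since $\{x:f_p(x)\le 0\}$ is reverse convex w.r.t. $\R^n_+$ (cf. \cref{prop:set-convex-LC}), and let $P_a$ be the set of such $p$ with $x^*_S\in\cl G_p$. For $p\in P_a$ pick, by the supporting hyperplane theorem, a nonzero $a_p$ with $\langle a_p, x_S-x^*_S\rangle\ge 0$ for all $x_S\in\cl G_p$, and form the polyhedron $\hat X_S := \{ x_S\in\R^S_+ : A_S x_S=b,\ \langle a_p, x_S-x^*_S\rangle\le 0,\ p\in P_a\}\ni x^*_S$. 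I claim $x^*_S$ is a vertex of $\hat X_S$. If not, there is $d\ne 0$ supported on $S$ with $x^*_S\pm\epsilon d\in\hat X_S$, so $A_S d=0$ and $\langle a_p,d\rangle=0$ for $p\in P_a$. Lift $d$ to $\R^n$ by zero. For $p$ with $f_p(x^*)<0$, and for tight $p\notin P_a$ (where $f_p\le 0$ on a $\R^S_{++}$-neighborhood of $x^*_S$), continuity gives $f_p(x^*\pm\epsilon d)\le 0$ for small $\epsilon$; for $p\in P_a$, $\langle a_p,d\rangle=0$ places $x^*_S\pm\epsilon d$ on a supporting hyperplane of $\cl G_p$, hence outside the relative interior of $G_p$, and since $x^*_S\pm\epsilon d\in\R^S_{++}$ and $\R^S_{++}$ is open, continuity of $f_p$ forces $f_p(x^*_S\pm\epsilon d)\le 0$. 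Thus $x^*\pm\epsilon d\in X$, contradicting extremality of $x^*$. So $x^*_S$ is a vertex of $\hat X_S\subseteq\R^{|S|}$; its active constraint normals --- the rows of $A_S$ and $\{a_p:p\in P_a\}$, the nonnegativity constraints being inactive at the strictly positive $x^*_S$ --- must span $\R^{|S|}$, whence $|P_a|\ge |S|-\rank A_S$. Together with the $n-|S|$ tight nonnegativity constraints indexed by $[1,n]\setminus S$, at least $(n-|S|)+(|S|-\rank A_S)=n-\rank A_S\ge n-m$ of the listed $P+n$ inequalities are tight.

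For the final assertion, assume $[\nabla f_p(x^*)]_S\notin\mathcal L(S)$ for every tight $p$. This forces $[\nabla f_p(x^*)]_S\ne 0$, and it rules out $x^*_S$ being a maximizer of $f_p$ over an open subset of $\R^S_{++}$ (such an interior maximizer would have zero restricted gradient, which lies in $\mathcal L(S)$); hence $x^*_S\in\cl G_p$, i.e. every tight $p$ lies in $P_a$. Moreover, for $p\in P_a$ and $y\in G_p$, convexity of $G_p$ gives $ty+(1-t)x^*_S\in G_p$ for $t\in(0,1]$, so $f_p(x^*_S+t(y-x^*_S))>0$; dividing by $t$, letting $t\downarrow 0$, and using $f_p(x^*_S)=0$ yields $\langle [\nabla f_p(x^*)]_S, y-x^*_S\rangle\ge 0$, so $[\nabla f_p(x^*)]_S$ is itself a valid nonzero supporting-hyperplane normal of $\cl G_p$ at $x^*_S$. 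Re-running the vertex argument with $a_p=[\nabla f_p(x^*)]_S$ gives $\mathcal L(S)+\operatorname{span}\{[\nabla f_p(x^*)]_S: f_p(x^*)=0\}=\R^{|S|}$, so one can select indices $p_1,\dots,p_{|S|-\rank A_S}$ with $f_{p_i}(x^*)=0$ for which the vectors $[\nabla f_{p_i}(x^*)]_S$ are linearly independent. Then $\{e_j:j\notin S\}\cup\{\nabla f_{p_i}(x^*):i\}$ is linearly independent: restricting a vanishing combination to the coordinates $S$ kills the $e_j$ terms and, by independence of the $[\nabla f_{p_i}(x^*)]_S$, kills the gradient terms, after which the $e_j$ coefficients vanish. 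This is a set of $(n-|S|)+(|S|-\rank A_S)=n-\rank A_S\ge n-m$ linearly independent vectors inside $\{\nabla f_p(x^*):f_p(x^*)=0\}\cup\{e_j:x^*_j=0\}$.

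The crux --- the step I expect to be the most delicate to write out --- is the vertex claim: showing a would-be feasible direction of the polyhedral relaxation $\hat X_S$ genuinely keeps $x^*\pm\epsilon d$ inside the nonconvex set $X$. This is exactly where reverse convexity (not just differentiability) is used, through the fact that a point lying on a supporting hyperplane of $\cl G_p$ and in the open orthant $\R^S_{++}$ must satisfy $f_p=0$. The bookkeeping that turns ``active normals span $\R^{|S|}$'' into the two stated conclusions, and the first-order argument identifying $[\nabla f_p(x^*)]_S$ as a supporting normal, are comparatively routine.
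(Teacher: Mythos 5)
Your proof is correct, and it follows the same geometric blueprint as the paper (supporting hyperplanes for the convex sets where $f_p>0$, then the basic-feasible-solution characterization of polyhedral extreme points), but the execution is genuinely leaner in two respects. First, where the paper builds a polyhedron $\hat X$ that is \emph{globally} inscribed in $X$ --- which forces the case split between strong separation (with margin $\hat\beta_p>0$ when $0\notin\cl(Y_p)$) and weak separation, plus the $x_j\ge\delta_1/2$ floor --- you only need a \emph{local} statement: any direction $d$ that is two-sided feasible for your polyhedron $\hat X_S$ at $x^*_S$ keeps $x^*\pm\epsilon d$ in $X$ for small $\epsilon$, contradicting extremality directly. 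This works because $G_p$ is open (so a point of $\R^S_{++}$ lying on a supporting hyperplane of $\cl G_p$ cannot lie in $G_p$), and it lets you discard non-tight constraints and tight constraints with $x^*_S\notin\cl G_p$ by a neighborhood argument rather than by strong separation. Second, you keep $A_Sx_S=b$ as explicit equality constraints and count via $\rank A_S$, instead of the paper's reduction to the null space of $A_S$ through the matrix $B$ and the sets $Y_p$; correspondingly, your treatment of the ``in addition'' part re-derives what the paper's Claim~\ref{complete-separation-Xp} supplies, namely that under $[\nabla f_p(x^*)]_S\notin\mathcal L(S)$ every tight $p$ has $x^*_S\in\cl G_p$ (your interior-maximum/zero-gradient observation) and that $[\nabla f_p(x^*)]_S$ is itself a valid supporting normal (your line-segment-principle plus difference-quotient limit, versus the paper's variational-inequality argument), after which the span/selection and lifting-by-$e_j$ step gives the stated $n-m$ linearly independent vectors. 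The only detail worth making explicit is that $f_p(x^*)=0$ together with openness of the convex set $G_p$ (so $\intt\cl G_p=G_p$) is what places $x^*_S$ on the boundary of $\cl G_p$ and licenses the supporting hyperplane theorem; with that sentence added, the argument is complete and, in my view, somewhat cleaner than the paper's.
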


Theorem \ref{Thm:general-nonconvex} is the main theoretical result in this paper. 
The proof largely follows the geometric intuition captured in Figure~\ref{fig:general_reverse}. At its core, it involves defining  separating hyperplanes and inscribing a polyhedral set $\hat X$ inside the feasible region. Then, the equivalence of extreme points and basic feasible solutions for the polyhedron $\hat X$ is leveraged to establish the result.

However, the proof has additional technical challenges. It must make sense of
how inequalities that describe the orthant $\R^n_+$ interact with the
gradients of the constraint functions $f_p$.
Moreover, the affine equality constraints $Ax = b$, that correspond to the
moment conditions in \eqref{eq:discrete-moment-problem}, force us to work
within the affine space defined by these constraints for much of the proof.
Finally, we require a spanning condition of the gradients to ensure that the
full analysis can be captured in that space.

\begin{proof}[Proof of \cref{Thm:general-nonconvex}]

Since $c$ is continuous and quasiconcave and $X$ is a compact set, then by \cref{theorem:restrict-attention-to-extreme-points}, there exists an optimal extreme point solution. For any such optimal extreme point $x^*$ with support $S = S(x^*)$ define
\begin{equation*}
X_0 := \{ x : Ax = b,\;  x_j = 0 \text{ for }  j \not\in S, \; x_j > 0 \text{ for }  j \in S\}.
\end{equation*}
Then the feasible region $X$ includes $\{ x : f_p(x) \le 0 ,\; p =1,\ldots, P\} \cap X_0$.
Let $\delta_1 = \min\{x^*_j : x^*_j > 0 \}$ and denote
\begin{equation*}
X(\delta_1) := \{ x : Ax = b,\;  x_j = 0 \text{ for }  j \not\in S, \; x_j \ge \delta_1/ 2,  \text{ for }  j \in S\}.
\end{equation*}

Our goal is as follows. For $p=1,\ldots, P$, we want to construct sets $\hat{X}_p$ of the form
\begin{equation}\label{hat-X-p}
\hat{X}_p := \{ x : \alpha_p^{\top}(x - x^*)\le  \beta_p \} \cap X(\delta_1),
\end{equation}
such that
\begin{equation}\label{X-hat}
\hat{X} := \cap_{p=1}^{P} \hat{X}_p = \{ x : \alpha_p^{\top}(x - x^*)\le  \beta_p , \; p=1,\ldots, P\} \cap X(\delta_1)
\end{equation}
is a subset of $X$, where $\alpha_p$ and $\beta_p \ge 0$ will be specified later. As long as $\hat{X} \subseteq X$, since $x^*$ is an extreme point of $X$, it is an extreme point of $\hat{X}$ as well. Note that $\hat{X}$ is defined by a number of linear equalities and inequalities, then there must exists $n$ of them that are tight at point $x^*$, and we can further check which constraint is tight.

We now construct such a $\hat{X}$. Let $x_S = [x_j]_{j \in S}$ and
\begin{equation}\label{X-p}
X_p = X_0 \cap \{ (x_S ; 0 ) : f_p(x_S;0)>0 \}.
\end{equation}
A key property of $X_p$ is that it admits a strong separation property useful for our arguments (see \cref{complete-separation-Xp} below). To describe this property, we explore a related set in a smaller subspace. Construct matrix $B \in \BR^{|S| \times (|S| - \rank(A_S))}$ such that its columns  span the whole null space of $A_S$; That is $A_S B = 0$ and $\rank(B) = |S| - \rank(A_S)$. Then, we have that
\begin{equation}\label{transformation}
\{ (x_S;0) : A_S\, x_S = b  \} = \{  (By + x^*_S;0) : y \in  \BR^{(|S| - \rank(A_S))}\}.
\end{equation}
Letting
\begin{equation*}
Y_p := \{ y : By + x^*_S >0, f_p(By + x^*_S ; 0)>0 \},
\end{equation*}
we can define the ``strong separation'' property of $X_p$ as follows.

\begin{claim}(Strong separation)\label{complete-separation-Xp} For all $p$ there exists $\alpha_p^{\top}$ and $\hat \beta_p >0$ such that
\begin{equation}\label{general-separation}
\left\{ \begin{array}{rl}
\hat \alpha_p^{\top} (x - x^*) \ge \hat{\beta}_p > 0, \text{ for } x \in X_p & \text{ if } \quad 0 \not\in \cl(Y_p)\\
\hat \alpha_p^{\top} (x - x^*) > 0, \text{ for } x \in X_p & \text{ if } \quad 0 \in \cl(Y_p)
\end{array}\right.
\end{equation}
Moreover, if we further assume $[ \nabla f_p(x^*) ]_S \not\in \mathcal{L}(S) $ then $\nabla f(x^*)^{\top}(x - x^*)  > 0$ for all $x \in X_p$.%
%
%
%
%
\end{claim}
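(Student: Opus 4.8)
\textbf{Proof proposal for \cref{complete-separation-Xp}.}

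The plan is to prove the strong separation property by separating the origin from the set $\cl(Y_p)$ in the $y$-space, and then pulling the separation back to $x$-space via the affine parametrization \eqref{transformation}. The key observation is that $Y_p$ is, up to the affine change of variables $x_S = By + x_S^*$, the intersection of two \emph{open convex} sets: the set $\{y : By + x_S^* > 0\}$ (open, convex, a polyhedral cone shifted) and the set $\{y : f_p(By + x_S^*;0) > 0\}$. The latter is convex because $\{x : f_p(x) \le 0\}$ is reverse convex w.r.t. $\R^n_+$, meaning $\{x : f_p(x) > 0, x \ge 0\}$ is convex; restricting to the support coordinates and composing with the affine map $y \mapsto By + x_S^*$ preserves convexity. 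Hence $Y_p$ is convex, and so is $\cl(Y_p)$. Note also that $0 \notin Y_p$: at $y = 0$ we have $By + x_S^* = x_S^*$, and $f_p(x^*) \le 0$ since $x^*$ is feasible, so the second defining inequality of $Y_p$ fails. Thus $0$ is not in the interior of $\cl(Y_p)$ either (the interior of the closure of a convex set equals its interior, which is contained in the set itself when the set is open).

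First I would handle the case $0 \notin \cl(Y_p)$. Since $\cl(Y_p)$ is a closed convex set not containing the origin, the strict separation theorem for a point and a closed convex set gives a vector $\gamma_p$ and a scalar $\hat\beta_p > 0$ with $\gamma_p^\top y \ge \hat\beta_p$ for all $y \in \cl(Y_p)$, after normalizing so that $\gamma_p^\top 0 = 0 < \hat\beta_p$. Translating back: for $x \in X_p$ we can write $x_S = By + x_S^*$ for some $y \in Y_p$ (and $x_j = 0$ off $S$), so $\gamma_p^\top y = \gamma_p^\top B^+ (x_S - x_S^*)$ where I would be careful to express $y$ linearly in $x_S - x_S^*$ — since $B$ has full column rank, $y$ is uniquely $(B^\top B)^{-1} B^\top (x_S - x_S^*)$. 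Setting $\hat\alpha_p$ to be the vector in $\R^n$ that is $B(B^\top B)^{-1}\gamma_p$ on the coordinates $S$ and $0$ elsewhere, we get $\hat\alpha_p^\top(x - x^*) = \gamma_p^\top y \ge \hat\beta_p > 0$, which is the first line of \eqref{general-separation}. For the case $0 \in \cl(Y_p)$, the origin lies on the boundary of the convex set $\cl(Y_p)$ (it is not in the open set $Y_p$, hence not in its interior), so the supporting hyperplane theorem yields a nonzero $\gamma_p$ with $\gamma_p^\top y \ge 0$ for all $y \in \cl(Y_p)$; moreover for $y \in Y_p$ itself (an open set, so $y$ is interior to $\cl(Y_p)$) the inequality is strict, $\gamma_p^\top y > 0$. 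Pulling back exactly as before gives $\hat\alpha_p^\top(x - x^*) > 0$ for all $x \in X_p$, the second line.

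For the final sentence, under the extra hypothesis $[\nabla f_p(x^*)]_S \notin \mathcal L(S)$, I would argue that $\nabla f_p(x^*)$ itself serves as the separating vector (restricted to $S$). The idea: $\{(x_S;0) : f_p(x_S;0) > 0\}$ is convex with $x^*$ on its boundary ($f_p(x^*) = 0$ when the constraint is tight — and when it is not tight one can perturb, but in the relevant application $p$ ranges over constraints where we examine tightness), so $\nabla f_p(x^*)$ is an outer normal: $\nabla f_p(x^*)^\top(x - x^*) \ge 0$ on that convex set by the gradient inequality for the "reverse" (the function $-f_p$ restricted appropriately is quasiconvex, or more directly, convexity of the superlevel set forces the gradient of $f_p$ to be a supporting functional). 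To get the \emph{strict} inequality on $X_p$ rather than just $\ge 0$, I would use that any $x \in X_p$ lies in the \emph{open} set $\{f_p > 0\} \cap \{x_S > 0\}$ intersected with the affine space $Ax = b$; the condition $[\nabla f_p(x^*)]_S \notin \mathcal L(S)$ precisely guarantees that $\nabla f_p(x^*)$ does not vanish on the tangent space of $\{A_S x_S = b\}$, so the linear functional $x \mapsto \nabla f_p(x^*)^\top(x-x^*)$ is not identically zero on that affine slice near $x^*$, and combined with $\ge 0$ and openness, it must be strictly positive on $X_p$.

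The main obstacle I anticipate is the last part: cleanly connecting the differential condition $[\nabla f_p(x^*)]_S \notin \mathcal L(S)$ to a \emph{global} strict inequality $\nabla f_p(x^*)^\top(x-x^*) > 0$ over all of $X_p$, not merely locally. The subtlety is that convexity of the superlevel set $\{f_p > 0, x_S \ge 0\}$ gives the weak inequality globally via the supporting hyperplane at the boundary point $x^*$, but upgrading to strictness requires ruling out that an entire face of $X_p$ lies in the hyperplane $\{\nabla f_p(x^*)^\top(x-x^*) = 0\}$ — and this is exactly where the span condition does its work: if some $x \in X_p$ had $\nabla f_p(x^*)^\top(x - x^*) = 0$, then since $X_p$ is relatively open in the affine space $\{Ax=b, x_j = 0 \text{ off } S\}$, a whole neighborhood within that space would have to satisfy the supporting inequality with equality on one side, forcing $[\nabla f_p(x^*)]_S$ to be orthogonal to the null space of $A_S$, i.e. to lie in $\mathcal L(S)$, a contradiction. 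I would need to write this tangent-space/annihilator duality argument carefully, keeping track of which coordinates are free.
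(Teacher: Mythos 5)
The first two cases of \eqref{general-separation} in your proposal are handled correctly and essentially as the paper does it: convexity and openness of $Y_p$, strict separation of $0$ from $\cl(Y_p)$ (resp.\ a supporting hyperplane at $0$ plus the fact that a supporting hyperplane cannot contain interior points of $\cl(Y_p)$), and the pull-back $\hat\alpha_p = B(B^\top B)^{-1}\gamma_p$ on the $S$-coordinates. The gap is in the ``moreover'' part, and it is exactly the step you wave through, not the one you flag as the obstacle. You assert that ``convexity of the superlevel set forces the gradient of $f_p$ to be a supporting functional'' at $x^*$ once $f_p(x^*)=0$. That is not a valid inference: reverse convexity w.r.t.\ the orthant gives convexity of $\{x\ge 0: f_p(x)>0\}$, but a zero of $f_p$ need not lie in the closure of that set, and then the gradient can point the wrong way. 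Concretely, take $f(z)=-z(z-1)(z-2)$ on $\R_+$: the superlevel set $\{z\ge 0: f(z)>0\}=(1,2)$ is convex, $f(0)=0$, but $f'(0)=-2$, so $f'(0)(x-0)<0$ for every $x$ in the superlevel set. So tightness of the constraint alone (let alone your parenthetical ``when it is not tight one can perturb'', which is not an argument) does not yield the weak inequality $\nabla f_p(x^*)^\top(x-x^*)\ge 0$ on $X_p$.

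What is actually needed — and what the paper's proof supplies — is the additional hypothesis $0\in\cl(Y_p)$ (equivalently, $x^*\in\cl(X_p)$ inside the affine slice); the paper's statement of the ``moreover'' omits it, but its proof assumes it, and in the proof of \cref{Thm:general-nonconvex} the gradient is only ever used as $\alpha_p$ in that case (Table~\ref{Table1}). Under this hypothesis the weak inequality follows not from a generic ``boundary point of a convex set'' principle but from first-order optimality: $g_p(y)=f_p(By+x^*_S;0)\ge 0$ on $\cl(Y_p)$ by continuity, while $g_p(0)=f_p(x^*)\le 0$, so $0$ minimizes $g_p$ over the closed convex set $\cl(Y_p)$ and the variational inequality gives $\nabla g_p(0)^\top y\ge 0$ there, with $\nabla g_p(0)=B^\top[\nabla f_p(x^*)]_S\neq 0$ precisely because $[\nabla f_p(x^*)]_S\notin\mathcal L(S)$. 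Your strictness argument (equality at some $x\in X_p$ would force $[\nabla f_p(x^*)]_S$ to annihilate the null space of $A_S$, i.e.\ lie in $\mathcal L(S)$, using relative openness of $X_p$ in the affine slice) is sound and is the same mechanism the paper uses via openness of $Y_p$; but as written your proof of the preceding $\ge 0$ step is incorrect, and fixing it requires importing the closure condition and the minimization argument above.
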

We relegate the proof of \cref{complete-separation-Xp} to Section \ref{sec:proof_claim-complete-seperation} in the supplement and return to constructing $\hat X$. According to \eqref{X-hat} it suffices to show how to construct {$\hat X_p$} such that
\begin{equation}\label{X-p-sebset-f-p}
{\hat{X}_p} \subseteq \{ x : f_p(x)\le  0 \}  ,\; p\in [1,P],
\end{equation}
since $X(\delta_1) \subseteq X_0$. In other words,
we need to prove that $x \in {\hat{X}_p}$ implies $f_p(x) \le 0$.

 \begin{table}
\centering
\begin{tabular}{|c|c|}
\hline
{$0 \notin \cl (Y_p)$} &  $\alpha_p = \hat{\alpha}_p$, $\beta_p = \hat{\beta}_p/2$, $\hat{\alpha}_p$ and $\hat{\beta}_p$ obtained by strong separation \\
\hline
{$0 \in \cl (Y_p)$} & $\alpha_p = \hat{\alpha}_p$, $\beta_p =0$, $\hat{\alpha}_p$ are obtained weak separation   \\
\hline
{$0 \in \cl (Y_p),\; [\nabla f_p(x^*)]_S \not\in \mathcal{L}(S)$ } & $\alpha_p = \nabla f_p(x^*)$ and $\beta_p =0$ \\
\hline
\end{tabular}
\caption{Specifying $\alpha_p$ and $\beta_p$ in {$\hat {X}_p$}.} \label{Table1}
\end{table}

We will show \eqref{X-p-sebset-f-p} in two cases: (i) $0 \not\in \cl(Y_p)$; (ii) $0 \in \cl(Y_p)$. We use \cref{Table1} to track some of the notation and details.

In case (i), according to \cref{complete-separation-Xp}, there exists some $\hat \alpha_p \neq 0$ and $\hat \beta_p \neq 0$ such that
\begin{equation}\label{strong-seperation}
\hat{\alpha}_p^{\top}(x - x^*)\ge  \hat{\beta}_p > 0\; \text{ for all }\; x \in { X_p}.
\end{equation}
By letting $\alpha_p = \hat{\alpha}_p$ and $\beta_p = \tfrac{\hat{\beta}_p}{2}$, one has $x^* \in {\hat X_p} \neq \emptyset$. Moreover, from definition \eqref{hat-X-p} of ${\hat X_p}$, any $x \in {\hat X_p}$ satisfies $x \in X(\delta_1) \subseteq X_0$ and
\begin{equation*}
\hat{\alpha}_p^{\top}(x - x^*) = {\alpha}_p^{\top}(x - x^*) \le  {\beta}_p = \hat{\beta}_p /2.
\end{equation*}
This combining with \eqref{strong-seperation} yields that $x \not\in {X_p}$ for any $x \in {\hat X_p}$. Then according to \eqref{X-p}, such $x$ does not belong to ${X_p}$ simply because it violates the constraint $f_p(x)>0$. Therefore we can conclude that $f_p(x) \le 0$ for all $x \in {\hat X_p}$.

In case (ii), again by \cref{complete-separation-Xp}, we have $\hat \alpha^{\top}(x - x^*)  > 0 \text{ for }  x \in X_p$, where $\hat \alpha = \nabla f(x^*)$ if $[ \nabla f_p(x^*) ]_S \not\in \mathcal{L}(S) $. Then we can take $\alpha_p = \nabla f_p(x^*)$ and $\beta_p = 0$ in \eqref{hat-X-p}. Obviously, $x^* \in \hat{X}_p \neq \emptyset$ and $x \not\in \hat{X}_p$ for any $x \in X_p$.
Similarly, we can argue that such a $x$ does not belong to $\hat X_p$ due to the violation of the constraint $f_p(x)>0$. Then it follows that $f_p(x) \le 0$ for all $x \in {\hat X_p}$.

So far, we have constructed ${\hat X_p}$ in the form of \eqref{X-p} as in Table 1 and $\hat{X}$ based on \eqref{X-hat}. Moreover, we have shown $\hat{X} \subseteq X$. Since $x^*$ is an extreme point of $X$ and lies both in $X$ and $\hat X$, it is an extreme point of $\hat{X}$ as well. Note that $\hat{X}$ is defined by a number of linear equalities and inequalities, then there must exists $n$ of them that are tight and linear independent at point $x^*$, by standard theory, e.g. \cite[Theorem~2.3]{bertsimas1997introduction}.

Since $A$ is an $m$ by $n$ matrix of rank $m$, there are $n-m$ tight constraints from
\begin{equation*}
\begin{array}{rl}
\alpha_p^{\top}(x - x^*)\le  \beta_p  & \text{ for } p \text{ such that } 0 \not\in \cl (Y_p) \\
{\alpha_p}^{\top}(x - x^*)\le  0 & \text{ for } p \text{ such that } 0 \in \cl(Y_p) \\
{x}_j = 0 & \mbox{for}\;  {j \not\in S }\\
{x}_j \ge \delta_1/2 & \mbox{for}\;  { j \in S},\\
\end{array}
\end{equation*}
where $\alpha_p = \nabla f_p$ if $[ \nabla f_p(x^*) ]_S \not\in \mathcal{L}(S) $. Now let's investigate which constraint in the above could be tight. First of all, it is obvious that $x^*_j = 0$ is tight for all {$j \not\in S$} and $x^*_j \ge \delta_1 >  \delta_1/2 $ could not be tight for all {$j \in S$}.
Then for the constraint $p$ such that $0 \not\in \cl(Y_p)$, since $\beta_p > 0$, $\alpha_p^{\top}(x^* - x^*) = 0 <  \beta_p$ cannot be tight.
Finally, recall we have proved in the previous discussion that $f_p({x^*})\ge 0$ for all $p$ such that {$0 \in \cl(Y_p)$}. That is, when $f_p(x^*)<0$, it holds that $0 \not\in \cl(Y_p)$ and thus the corresponding constraint $\alpha_p^{\top}(x - x^*)\le  \beta_p $ cannot be tight at $x^*$. In summary, all $n-m$ tight constraints come from
\begin{equation}\label{linear-tight}
\begin{array}{rl}
{\alpha_p}^{\top}(x - x^*)\le  0 & \mbox{for} \; p \; \mbox{such that}\; {f_p(x^*)=0 \; \mbox{and} \; 0 \in \cl(Y_p)}\\
{x}_j = 0 & \mbox{for}\;  {j \not\in S},\\
\end{array}
\end{equation}
which implies $n-m$ of the inequalities
\begin{equation*}
\begin{array}{rll}
f_p(x^*) \le 0 & \mbox{for}& p \in [1,P]\\
x^*_j \ge 0 & \mbox{for} & j \in [1,n]
\end{array}
\end{equation*}
in \eqref{eq:reverse-convex-general} are tight. Moreover, when $[ \nabla f_p(x^*) ]_S \not\in \mathcal{L}(S) $ for all $p$, $\alpha_p = \nabla f_p(x^*)$ in~\eqref{linear-tight} and these tight constraints are linearly independent. In other words, the set of vectors $\{ \nabla f_p(x^*): f_p(x^*) = 0   \} \cup \{ e_j: x^*_j = 0\}$ are linearly independent, where $e_j$ is the gradient of the constraint $x_j \ge 0$. This completes the proof of \cref{Thm:general-nonconvex}. 
\end{proof}


\section{Characterizing optimal extreme point solutions in the discrete moment problem}\label{s:characterize}

\cref{Thm:general-nonconvex,theorem:restrict-attention-to-extreme-points} are powerful tools for analyzing the moment problems we discussed in \Cref{s:problem}. They will allow us to characterize the structure of optimal extreme point solutions. In the following two subsections we analyze the LC and IFR distributions cases from \Cref{ss:log-concave-problem,ss:ifr-problem}. There is a general pattern to our analysis, which we briefly describe here.

Each problem has two alternate formulations, with one indicated by a ``prime''. In the LC case these two formulations are (DMP-LC) and (DMP-LC'). The ``prime'' formulation has a closed and compact feasible region which allows us to leverage \cref{theorem:restrict-attention-to-extreme-points} to show the existence of an optimal extreme point solution $x^*$. With $x^*$ in hand, we apply \cref{Thm:general-nonconvex} to a small adjustment of the ``non-prime''  formulation that replaces strict inequalities with non-strict inequalities based on the support of $x^*$. \cref{Thm:general-nonconvex} implies that a certain number of constraints are tight, including some number of the reverse convex constraints (for instance, \eqref{eq:dmp-bad-constraints} in (DMP-LC)). Making these constraints tight determines the structure of the optimal extreme point solutions. In the LC case, a piecewise geometric structure is obtained.

\subsection{Log-concavity}\label{ss:log-concave-solution}

Recall the two alternate formulations (DMP-LC) and (DMP-LC'). In particular, recall that there are $m+1$ moment constraints in \eqref{eq:dmp-lc-moments} and \eqref{eq:dmp-lc-prime-moments}.
\begin{theorem}\label{theorem:log-concave-solution}
Every feasible instance of (DMP-LC) has an optimal extreme point solution. Moreover, every  optimal extreme point solution $x^*$ has the following structure: there exist (i) integers $u_i$ and $v_i$ for $i \in [1, m]$ with $k = u_1 < v_1 = u_2 < v_2 \dots < v_{m-1} = u_m < v_m = \ell$ where $[k,\ell]$ is the support of $x^*$ and (ii) real parameters $\alpha_i > 0$, $0 < r_i < 1$ for $i \in [1,m]$ such that
\begin{equation}\label{geometric-form}
x^*_j =
\begin{cases}
\alpha_i r_i^{j - u_i} & \text{ for }  j \in [u_i, v_i]\\
0 & \text{ otherwise.}
\end{cases}
\end{equation}
That is, there exists an optimal solution to (DMP-LC) that has a piecewise geometric structure with (at most) $m$ pieces.
\end{theorem}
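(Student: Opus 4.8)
The plan is to combine the existence of an optimal extreme point solution for the ``prime'' formulation with the tightness count from \cref{Thm:general-nonconvex} applied to a support-restricted version of the ``non-prime'' formulation. First I would invoke \cref{prop:equivalent-formulations-lc} so that it suffices to work with (DMP-LC'); since (DMP-LC') has a closed feasible region (the constraints \eqref{eq:dmp-good-constraints} and \eqref{eq:dmp-lc-prime-nonnegativity} are non-strict) and is bounded because the moments are fixed (e.g.\ $\sum_j x_j = q_0 = 1$ together with nonnegativity), the feasible region is compact. The objective is linear, hence continuous and quasiconcave, so \cref{theorem:restrict-attention-to-extreme-points} gives an optimal extreme point solution $x^*$ of (DMP-LC'). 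By the equivalence, $x^*$ is also optimal for (DMP-LC), and it has some consecutive support $[k,\ell]$ by condition (i) of \cref{definition:log-concave}.

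Next I would set up the instance of \eqref{eq:reverse-convex-general} to which \cref{Thm:general-nonconvex} applies. Restricting to the support $[k,\ell]$ and writing the problem in the $n' := \ell - k + 1$ variables $x_k,\dots,x_\ell$, the constraints are: the $m+1$ moment equalities \eqref{eq:dmp-lc-moments} (which form a full-row-rank system after discarding redundancies --- one should check the $w_j = j$ Vandermonde-type structure guarantees rank exactly $m+1$ when $n' \ge m+1$, and handle the small-support case separately), the reverse-convex-w.r.t.-$\R^{n'}_+$ constraints $x_{j-1}x_{j+1} \le x_j^2$ for $j \in (k,\ell)$ (reverse convex w.r.t.\ the orthant by \cref{prop:set-convex-LC} with $u=v=1$), and $x_j \ge 0$. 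On the relative interior of the support all $x_j^* > 0$, so \cref{Thm:general-nonconvex} applies with $S = S(x^*) = [k,\ell]$, and the ``$n-m$ tight constraints'' conclusion says: of the $n' - (m+1)$ ``extra'' tight constraints at $x^*$, since none of the nonnegativity constraints $x_j \ge 0$ can be tight (they all hold strictly on the support), \emph{all} $n' - (m+1)$ of them must come from the log-concavity inequalities, i.e.\ $x^*_{j-1}x^*_{j+1} = (x^*_j)^2$ for at least $n' - m - 1$ values of $j \in (k,\ell)$. Since $(k,\ell)$ has exactly $n' - 2$ elements, at most one interior index fails to have its log-concavity constraint tight.

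The final step translates ``at most one slack log-concavity constraint'' into the piecewise-geometric form. If $x^*_{j-1}x^*_{j+1} = (x^*_j)^2$ holds for all $j$ in a consecutive block $[u,v] \subseteq [k,\ell]$ with $x^*_j > 0$ throughout, then the ratios $x^*_{j+1}/x^*_j$ are constant on that block, say equal to $r$, giving $x^*_j = x^*_u \, r^{\,j-u}$ — a geometric piece. The (at most one) index where the constraint is slack is the only place a new ratio can start, so $[k,\ell]$ splits into at most two geometric pieces from a single slack constraint; to reach the claimed bound of $m$ pieces one iterates/accounts properly — here I would argue that if the support were short ($n' \le m+1$) the claim is trivial (a single geometric extension, or fewer than $m$ pieces suffices), and for $n' \ge m+1$ the $n' - m - 1$ tight constraints partition $(k,\ell)$ into at most $m$ maximal runs of tight constraints, hence at most $m$ geometric pieces, with breakpoints $u_1 < v_1 = u_2 < \cdots < v_m$. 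Finally, $0 < r_i < 1$ follows because each piece is a positive geometric sequence and, summing to a finite total against fixed moments while remaining log-concave across the joints, the common ratios must lie strictly in $(0,1)$ (a monotonicity/feasibility argument using that $r_i \ge 1$ on a piece adjacent to the right end would contradict the tail behavior forced by the moment constraints and nonnegativity); $\alpha_i = x^*_{u_i} > 0$ is immediate. I expect the main obstacle to be the bookkeeping in this last step: carefully verifying the rank of the moment submatrix (so that the count ``$n-m$'' is exactly $n' - m - 1$ rather than something smaller), correctly merging adjacent tight-constraint runs so the breakpoints interlace as $v_i = u_{i+1}$, and nailing down $r_i \in (0,1)$ rather than merely $r_i > 0$ — the reverse-convexity machinery gives the count of tight constraints cleanly, but extracting the precise combinatorial "$m$ pieces" statement and the open-interval bounds on the ratios requires the separate feasibility arguments sketched above.
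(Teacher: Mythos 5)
Your overall strategy is the same as the paper's (compactness plus \cref{theorem:restrict-attention-to-extreme-points} for existence, then a support-restricted instance of \eqref{eq:reverse-convex-general} and the tightness count of \cref{Thm:general-nonconvex}, then maximal runs of tight constraints giving the geometric pieces), but there is a genuine gap at the point where you invoke \cref{Thm:general-nonconvex}. You replace the strict constraints $x_j > 0$, $j \in [k,\ell]$, of (DMP-LC) by $x_j \ge 0$. This does not merely close the feasible region; it strictly enlarges it, because the pairwise constraints $x_{j-1}x_{j+1} \le x_j^2$ together with nonnegativity do \emph{not} force a consecutive support (for instance, a vector proportional to $(a,0,0,0,b)$ with $a,b>0$ satisfies every pairwise constraint). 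Such points are infeasible for (DMP-LC) under every choice of support, and they can carry a strictly larger objective value, so $x^*$ need not be an optimal solution of your relaxed instance; it need not even be an extreme point of it, since extremality does not transfer to supersets and the relaxed region is nonconvex (you cannot simply shrink $\epsilon$ in the definition of extreme point). \cref{Thm:general-nonconvex} is stated for an optimal extreme point of the instance it is applied to, so the conclusion that $n'-m-1$ log-concavity constraints are tight at $x^*$ is not justified as written.

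The paper closes exactly this hole with the $\underline{x}/2$ device: with $\underline{x} = \min_j x^*_j$ over the support it imposes $x_j \ge \underline{x}/2$, so every feasible point of the restricted problem has full support and is log-concave; the restricted region is therefore contained in the (DMP-LC) feasible set, which preserves both optimality and extremality of $x^*$, and the change of variables $y_j = x_j - \underline{x}/2$ converts the lower bounds into the nonnegativity constraints demanded by the \eqref{eq:reverse-convex-general} format while making them automatically slack at $y^*$. With that substitute your counting argument goes through exactly as in the paper. Two smaller points: your intermediate claim that ``at most one interior index fails'' should read ``at most $m-1$'' (your final paragraph implicitly corrects this), and the strict bounds $0 < r_i < 1$ are not actually produced by this tightness argument — the paper's own proof likewise only delivers the piecewise geometric form with $\alpha_i = x^*_{u_i} > 0$ and $r_i > 0$, so your planned extra feasibility argument for $r_i < 1$ is not something you should expect the counting machinery to supply.
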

\begin{proof} Consider the (DMP-LC') representation of the problem. The zeroth order moment constraint (\eqref{eq:dmp-lc-prime-moments} for $i = 0$) is $\sum_{i=1}^{n} x_i =1$, which, along with the nonnegative constraints \eqref{eq:dmp-lc-prime-nonnegativity}, implies the feasible region of the problem (DMP-LC') is compact. Then by \cref{theorem:restrict-attention-to-extreme-points}, there exists an optimal extreme point solution to (DMP-LC') and thus also (DMP-LC) since these problems are equivalent (via \cref{prop:equivalent-formulations-lc}).

Let $x^*$ be any extreme optimal solution and for simplicity we assume its support is $[1, n]$ (the general case of suppose $[k,\ell]$ with $1 < k < \ell < n$ follows analogously). Note that when $n \le m$, there are at most $m$ points in the interval $[1,n]$, where each point  $x_j$, $j \in [1,n]$ could be viewed as a single piece and the conclusion readily follows. Therefore, in the remainder of the proof we assume $n \ge m+1$.

Let $\underline{x} := \min\{ x_j^* :  j \in [1,n] \}$ and define the following problem:
\begin{subequations}\label{log-concave-inner-restriction-problem}
\begin{align}
\max_{x \in \R^n} \  & \sum_{j=1}^n f_j x_j \\
\text{s.t. } & \sum_{j=1}^n w_j^i x_j = q_i \text{ for } i \in [0, m]  \\
             & x_{j-1}x_{j+1} \le x_j^2 \text{\ \ for } j \in (1,n -1) \label{eq:log-concave-inner-restriction-problem-shape}\\
             & x_j \ge \underline{x}/{2} \text{ for } j \in [1, n]. \label{eq:great-to-be-nonstrict}
\end{align}
\end{subequations}
Note that \eqref{log-concave-inner-restriction-problem} is a restriction of (DMP-LC) with a given support and replacing the strict inequalities in \eqref{eq:dmp-strict} with non-strict inequalities in \eqref{eq:great-to-be-nonstrict}. Note also that $x^*$ is an extreme optimal solution to (DMP-LC) and it is feasible to \eqref{log-concave-inner-restriction-problem}, hence $x^*$ is an extreme optimal solution to \eqref{log-concave-inner-restriction-problem}.

To uncover the structure \eqref{geometric-form} of $x^*$ we apply \cref{Thm:general-nonconvex}. Convert the constraint $x_j \ge \underline{x}/{2}$ as a nonnegative constraint to mimic the nonnegativity constraint of \eqref{definition:reverse-convex} by making a change of variables $y_j := x_j -  \underline{x}/2$ to arrive at the following equivalent form:
\begin{subequations}\label{log-concave-inner-restriction-equivalence}
\begin{align}
\max_{x \in \R^n} \  & \sum_{j = 1}^{n} f_j y_j + (\underline{x}/2)  \sum_{j = 1}^{n} f_j \\
\text{s.t. } & \sum_{j=1}^{n} w_j^i y_j = q_i - (\underline{x}/2)  \sum_{j=1}^{n}w_j^i  \text{ for }  i \in [0, m]  \\
             & y_{j-1}y_{j+1} +  (\underline{x}/2)  (y_{j-1} + y_{j+1})\le y_j^2 + \underline{x} \cdot y_j \text{ for } j \in (1,n) \label{eq:reverse-convex-reformulation}\\
             & y_j \ge 0,\text{ for } j \in [1, n].
\end{align}
\end{subequations}
Observe that $y^* := x^* - \underline{x}/2$ is an optimal extreme point solution of \eqref{log-concave-inner-restriction-equivalence}.

We now verify that \eqref{log-concave-inner-restriction-equivalence} satisfies the conditions of \cref{Thm:general-nonconvex}. Again, the zeroth order moment constraint  guarantees the feasible region is compact. Let $f_j(y) = y_{j-1}y_{j+1} +  (\underline{x}/2)  (y_{j-1} + y_{j+1}) - y_j^2 - \underline{x} \cdot y_j$ for $j \in (1,n)$. Here the index $j$ plays the role of index $p$ in \cref{Thm:general-nonconvex}. Note that $p$ (the index of the constraint functions) need not be tied to $j$ (the index of the decision variable components) in a general application of \cref{Thm:general-nonconvex}. As we have shown in \cref{prop:set-convex-LC}, the set $\{x : x_{j-1} x_{j+1} > x_j^2, x \ge 0\}$ is convex, and it is an easy extension that $\{x : x_{j-1} x_{j+1} > x_j^2, x \ge \underline{x} > 0\}$ and this implies that $\{y : f_j(y) >0, y \ge 0\}$ is convex. This implies that all of the conditions in \cref{Thm:general-nonconvex} are satisfied when applied to \eqref{log-concave-inner-restriction-equivalence}.

Since the constraints $y_j \ge 0$ cannot be tight at point $y^*$ for $j \in [1,n]$, this application of \cref{Thm:general-nonconvex} implies that at least $n - m - 1$ of the \eqref{eq:reverse-convex-reformulation} constraints are tight at $y^*$, or equivalently there are at most $m - 1$ of the \eqref{eq:log-concave-inner-restriction-problem-shape} constraints that are not tight at $x^*$ in \eqref{eq:log-concave-inner-restriction-problem-shape}. These non-tight indexes can divide the interval $[1, n]$ into at most $m$ pieces, and within each piece we have $x_{j-1}x_{j+1}  = x_{j}^2, \text{ for } j \in [u_i, v_i]$, where $u_i$, $v_i$ are the left and right endpoint of piece $i$ of the domain. It is a standard observation to note that such a system implies $x_j = r_i^{j - u_i} x_{u_i}$ for $j \in [u_i,v_i]$. Setting $\alpha_i = x_{u_i}$ yields the form \eqref{geometric-form}. 
\end{proof}

The piecewise geometric form \eqref{geometric-form} of optimal extreme point distributions to (DMP-LC) is illustrated in \cref{fig:piecewise-geometric-form}.
\begin{figure}[!t]
\centering
\includegraphics[width=0.5\textwidth]{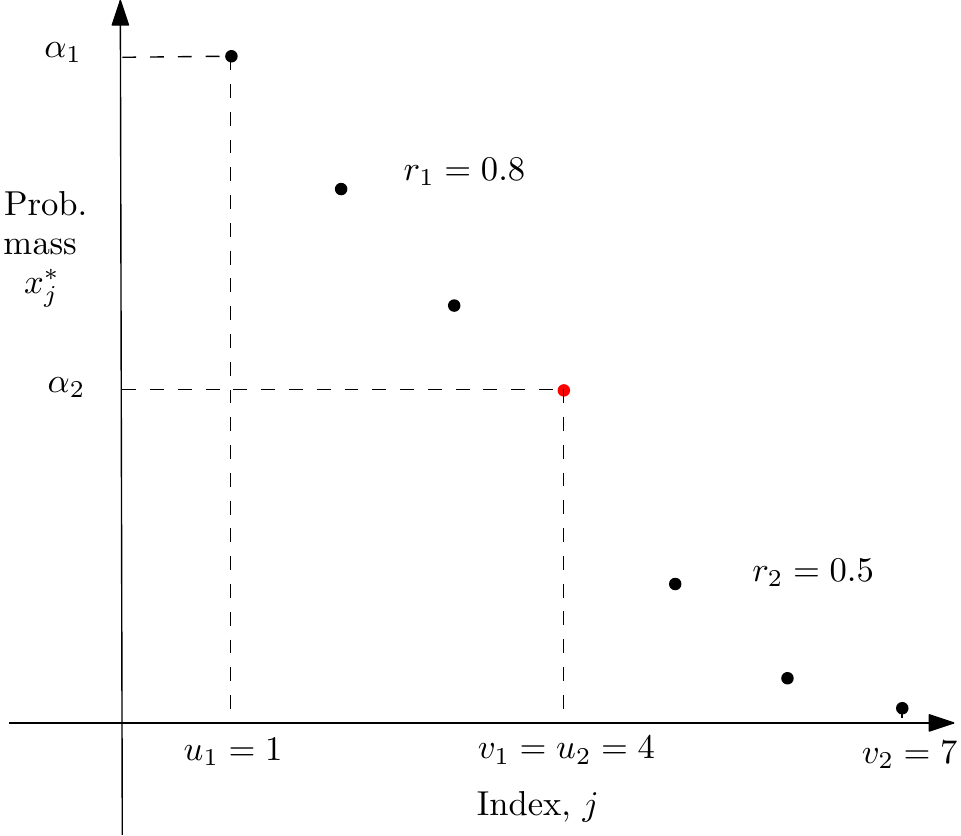}
\caption{Piecewise geometric structure of optimal extreme point solutions for a problem with $m = 2$.}
\label{fig:piecewise-geometric-form}
\end{figure}

The proof of Theorem~\ref{theorem:log-concave-solution} does not use the linear independence conditions of \cref{Thm:general-nonconvex}. A basic count of tight constraints is able to deliver the piecewise geometric structure, since the number of constraints in problem (DMP-LC) for a given support is small compared to the number of variables. Consider support $[1,n]$ in (DMP-LC). \cref{Thm:general-nonconvex} implies that $n - m$ of the $2n - 2$ constraints in \eqref{eq:dmp-bad-constraints}--\eqref{eq:dmp-strict} are tight. Since all constraint in \eqref{eq:dmp-strict} are strict (this is handled carefully in the proof) this implies all $n - m$ tight constraints are from \eqref{eq:dmp-bad-constraints}, which are of the form $x_{j-1}x_{j+1} \le x_j^2$. Setting $n-m$ of these constraints to equality directly yields the geometric structure \eqref{geometric-form}.


\subsection{Increasing failure rate}\label{ss:ifr-solution}

Recall the formulation (DMP-IFR') of the IFR moment problem in \Cref{ss:ifr-problem} with $y_j=\sum_{k=j}^n x_k$. We will show that the optimal solution has similar structure as the log-concave case, again using \cref{Thm:general-nonconvex,theorem:restrict-attention-to-extreme-points}.

Here we notice two facts. First, by the log-concave constraint and the non-increasing property of $y_j$, any feasible solution $y$ has a consecutive support naturally, and the support starts from $y_1 = 1$. This is different from the log-concave case. Second, if there is some $\ell$ such that $y_\ell = y_{\ell+1} > 0$, this combined with the constraint $y_{\ell-1}y_{\ell+1} \le y_{\ell}^2$ indicates that we have $y_{\ell-1} \le y_{\ell}$. However, we also have $y_{\ell-1} \ge y_{\ell}$ in the problem's constraints. This means that $y_\ell = y_{\ell+1} > 0$ implies $y_{\ell-1} = y_{\ell}$. Then by induction we have $y_1 = \dots = y_{\ell+1} = 1$.

Combine the two facts above, the interval $[1,n]$ can be divided into three consecutive parts: $[1,j_1),[j_1,j_2),[j_2,n]$, where we have $y_1 = \dots = y_{j_1} = 1$, $1 = y_{j_1} > \dots > y_{j_2} = 0$, $0 = y_{j_2} = \dots = y_{n} $, i.e., an all-one interval, a strictly decreasing interval, and an all-zero interval. Further, the optimal solution in the middle interval has a more detailed characterization stated here.

\begin{theorem}\label{Thm-IFR}
Every feasible instance of (DMP-IFR') has an optimal extreme point solution. Moreover, for every optimal extreme point solution $y^*$, there exist integers $1\le j_1\le j_2$ such that $y_j = 1$ when $j\le j_1$, $y_j = 0$ when $j\ge j_2$. The interval $[j_1, j_2]$ can be divided as follows. There exist (i) integers $u_i$ and $v_i$ for $i \in [1, m]$ with $j_1 = u_1 < v_1 = u_2 < v_2 \dots < v_{m-1} = u_m < v_m = j_2$ (ii) real parameters $\alpha_i > 0$, $0 < r_i < 1$ for $i \in [1,m]$ such that
\begin{equation}\label{ifr-geometric-form}
y^*_j =
\begin{cases}
\alpha_i r_i^{j - u_i} & \text{ for }  j \in [u_i, v_i]\\
0 & \text{ otherwise.}
\end{cases}
\end{equation}
\end{theorem}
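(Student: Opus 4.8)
The plan is to follow the same three-part strategy as in the proof of \cref{theorem:log-concave-solution}, now accounting for the monotonicity constraints \eqref{eq:ifr-prime-monotonicity} and for the block decomposition recorded in the two observations just above. For existence: the $i=0$ moment constraint of (DMP-IFR') telescopes (using $w_0^i=0$) to $y_1=1$, which together with \eqref{eq:ifr-prime-monotonicity} and $y\ge 0$ forces $0\le y_n\le\cdots\le y_1=1$, so the feasible region is a closed subset of $[0,1]^n$, hence compact; since the objective is linear (hence continuous and quasiconcave), \cref{theorem:restrict-attention-to-extreme-points} yields an optimal extreme point $y^*$. By the two observations preceding the theorem, $y^*$ then equals $1$ on a block $[1,j_1]$, is strictly decreasing and positive on $[j_1,N]$ where $N:=j_2-1$, and equals $0$ on $[j_2,n]$, with $j_1=\max\{j:y^*_j=1\}$ and $N=\max\{j:y^*_j>0\}$. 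If $N\le j_1+m$ the asserted form holds trivially (use single lattice points, or degenerate intervals, as ``pieces''), so assume $N\ge j_1+m+1$; it remains to establish the piecewise-geometric structure on $[j_1,N]$.

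The essential step is a reduction to a small reverse-convex program supported on the middle block. Freeze $y_j=1$ for $j\le j_1$ and $y_j=0$ for $j\ge j_2$ and restrict (DMP-IFR') to the free variables $z=(y_{j_1+1},\dots,y_N)\in\R^{N-j_1}$. After substituting the frozen values, the $i=0$ moment becomes vacuous and the moments $i=1,\dots,m$ become at most $m$ affine equations; the surviving shape constraints are the log-concavity inequalities $y_{j-1}y_{j+1}\le y_j^2$ for $j_1\le j\le N-1$ (the ones at $j=j_1$ and $j=j_1+1$ reference the frozen data but are harmless, and the ones at $j=N,\,j_2$ are trivially true and can be dropped), the monotonicity inequalities $y_j\ge y_{j+1}$ for $j_1\le j\le N-1$, and $z\ge 0$. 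This program satisfies \cref{ass:assumptions-on-the-problem}: linear objective; feasible set inside $[0,1]^{N-j_1}$ and closed, hence compact; polynomial (hence differentiable) constraint functions; and, by \cref{prop:set-convex-LC} with $u=v=1$ (and trivially for the linear monotonicity constraints), the associated sets are reverse convex with respect to $\R^{N-j_1}_+$. Crucially, $y^*$ restricted to $z$ is still an extreme point of the restricted feasible set: padding any admissible perturbation direction by zeros on the frozen blocks yields an admissible direction for the full (DMP-IFR'), the shape inequalities at the two seams $j=j_1$ and $j=N$ being respected precisely because $y^*$ strictly decreases at $j_1$ and $y^*_N>y^*_{N+1}=0$; this would contradict extremality of $y^*$. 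Unlike in the LC proof, no change of variables is needed here, since the surviving constraints $z\ge 0$ hold strictly at $y^*$ and hence are never tight.

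Applying \cref{Thm:general-nonconvex} to this program --- with $N-j_1$ variables and equality-constraint rank at most $m$ --- at least $(N-j_1)-m$ of the shape-plus-nonnegativity constraints are tight at $y^*$. Since $y^*$ is strictly positive and strictly decreasing on $[j_1,N]$, no nonnegativity and no monotonicity constraint is tight (and the lone log-concavity constraint at $j=j_1$, namely $y_{j_1+1}\le 1$, is slack); hence at least $(N-j_1)-m$ of the $N-1-j_1$ interior log-concavity constraints $y_{j-1}y_{j+1}\le y_j^2$, $j\in(j_1,N)$, are equalities --- equivalently, at most $m-1$ of them are slack. Cutting $[j_1,N]$ at the slack indices produces at most $m$ consecutive subintervals, and on each one the interior equalities $y_{j-1}y_{j+1}=y_j^2$ force $y^*$ to be geometric, $y^*_j=\alpha_i r_i^{\,j-u_i}$, with $\alpha_i>0$ by positivity and $r_i\in(0,1)$ by strict decrease. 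Relabeling the cut points as $u_1=j_1$, $v_1=u_2<\cdots<v_{m-1}=u_m$ and reattaching the all-ones and all-zeros blocks yields \eqref{ifr-geometric-form}.

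The main obstacle is getting this reduction right. Feeding (DMP-IFR') restricted merely to its support into \cref{Thm:general-nonconvex} does not suffice: the all-ones block generates many tight monotonicity \emph{and} log-concavity constraints, which consume the theorem's budget of $n-m$ tight constraints and leave a bound on the decreasing block far weaker than ``$m$ pieces''. One must quarantine the all-ones and all-zeros blocks as frozen data; the price is that one must then verify (i) that this freezing does not destroy extremality of $y^*$ --- exactly where strict decrease at the seam $j=j_1$ enters --- and (ii) that the reduced program still satisfies every hypothesis in \cref{ass:assumptions-on-the-problem}.
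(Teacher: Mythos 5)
Your proposal is correct, but it takes a genuinely different route from the paper's. The paper keeps the all-ones block inside the optimization: it restricts (DMP-IFR') only to the support $[1,k]$ with $k=j_2-1$, and then---precisely because the all-ones block makes many log-concavity and monotonicity constraints tight, so that a bare count of tight constraints is too weak---invokes the ``in addition'' (linear-independence) part of \cref{Thm:general-nonconvex}. Verifying its hypothesis $[\nabla f_p(y^*)]_S\notin\mathcal L(S)$ is the bulk of the paper's proof: it constructs an alternating-sign vector $s$ in the orthogonal complement of the moment rows via the binomial identity of \cref{orthogonal-claim}, checks $s^\top\nabla f_p(y^*)\neq 0$ and $s^\top\nabla h_q(y^*)\neq 0$, and then subtracts the at most $j_1-1$ linearly independent tight gradients contributed by the all-ones block. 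You instead freeze both the all-ones and all-zeros blocks and apply only the first (counting) part of \cref{Thm:general-nonconvex} to the reduced program on the strictly decreasing block, where nonnegativity and monotonicity are automatically slack; this sidesteps the gradient-span verification and the binomial identity entirely and makes the IFR argument parallel to the LC one. Your reduction is sound: every constraint of (DMP-IFR') involving a free coordinate is either retained with frozen data or identically satisfied (those at $j=N$ and $j=j_2$, since $y_{N+1}=y_{j_2}=0$), so zero-padded perturbations remain feasible and extremality transfers---in fact the strict decrease at the seam that you invoke is not even needed for this step. Two routine points you should still record: the restricted point is also \emph{optimal} for the reduced program (padding preserves feasibility and shifts the objective by a constant), since \cref{Thm:general-nonconvex} is stated for optimal extreme points; and the restricted moment matrix $[(w_j^i-w_{j-1}^i)]_{i\in[1,m],\,j\in[j_1+1,N]}$ has full row rank (its rows are polynomials in $j$ of degrees $0,\dots,m-1$ evaluated at at least $m$ consecutive points), or else redundant rows must be discarded, so that \cref{ass:assumptions-on-the-problem}(ii) holds. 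The trade-off: the paper's proof exercises the linear-independence refinement of \cref{Thm:general-nonconvex} (its stated motivation for that part of the theorem), while yours is more elementary and arguably cleaner, at the price of the careful seam bookkeeping you already carry out.
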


We remark on an important difference in the analysis of the LC and IFR cases. Here it is not enough to have a lower bound on the number of tight constraints given by the first part of \cref{Thm:general-nonconvex}. The reason is that (DMP-IFR') has in the order of $2n$ constraints of type $f_p(x) \le 0$ (using the notation of \eqref{eq:reverse-convex-general}) corresponding to constraints \eqref{eq:ifr-prime-logconcave} and \eqref{eq:ifr-prime-monotonicity} in (DMP-IFR'), rather than $n$ such constraints in the LC case. This requires us to use the ``in addition'' part of \cref{Thm:general-nonconvex} that invokes the linear independence of gradients. For this reason, the proof of \cref{Thm-IFR} requires additional work.

\section{An implementation with numerical results}\label{s:computation-and-numerical}

In this section we results results in \Cref{s:characterize} to solve a representative sample of moment problem numerically. We focus on the moment problem over log-concave distributions with two moments as a proof of concept of our approach (these ideas carry over to the more general case). That is, we find an optimal solution to
\begin{subequations}\label{eq:DMP-LC-2moment}
\begin{align}
&&\max_{k, \ell: 1\le k\le \ell \le n} \ \max_{x \in \R^n} \  & \sum_{j=k}^\ell f_j x_j
\qquad\qquad\qquad\qquad\qquad\qquad\qquad\qquad\qquad\\
 && \text{s.t. } & \sum_{j=k}^\ell w_j^i x_j = q_i \text{ for } i \in [0, 2] \label{eq:2-moment-moments} \\
            && & x_{j-1}x_{j+1} \le x_j^2 \text{\ \ for } j \in (k, \ell) \label{eq:2-moment-lc}  \\
            && & x_j > 0 \text{ for } j \in [k, \ell]  \\
            && & x_j = 0 \text{ for } j \notin [k, \ell] \label{eq:2-moment-zero}
\end{align}
\end{subequations}
using the structure of optimal extreme point solutions in \cref{theorem:log-concave-solution}. According to that theorem, there exists an optimal piecewise geometric distribution for \eqref{eq:DMP-LC-2moment} with at most $m = 2$ pieces. Thus, we can restrict the search to finding feasible parameters $k$, $v_1$, $\ell$, $\alpha_1$, $\alpha_2$, $r_1$, and $r_2$ to construct an $x^*$ according to \eqref{geometric-form} that satisfies the constraints of the problem with the largest objective value. Observe that \eqref{geometric-form} captures the structure of constraints \eqref{eq:2-moment-lc}--\eqref{eq:2-moment-zero} in (DMP-LC), the choice of parameters is further restricted by the moment constraints \eqref{eq:2-moment-moments}.

A more traditional approach to solving \eqref{eq:DMP-LC-2moment} would be take $x$ as the decision variable and solve \eqref{eq:DMP-LC-2moment} directly. The resulting problem is nonconvex and (potentially) high-dimensional if $n$ is large, whereas our approach remains low-dimensional as $n$ grows.
%
%

\subsection{Computational approach}\label{ss:computational-method}

In this section we describe how to reduce the search for optimal extreme point solutions to \eqref{eq:DMP-LC-2moment} from a seven-dimensional decision space -- $k$, $v_1$, $\ell$, $\alpha_1$, $\alpha_2$, $r_1$, and $r_2$ -- to a four-dimensional decision space. The first three variables concerning the domain: $k$ and $\ell$ describe the support and $v_1$ the ``break-point'' between the two geometric pieces. The fourth parameter, which is denoted $\alpha$ in the sequel,  captures the geometric shape of the constraints and accounts for all of $\alpha_1$, $\alpha_2$, $r_1$, and $r_2$ when restricted to satisfy the moment constraints \eqref{eq:dmp-lc-moments}. We construct this parameter over the next several paragraphs.

The first step in this reduction is a normalization step. Recall that an instance of \eqref{eq:DMP-LC-2moment} is specified by the elements of the sample space $\Omega = (w_1, \dots, w_n)$ and the moments $q_1$ and $q_2$. For simplicity, we shift and scale the elements of the sample space so that the resulting distribution has mean $q'_1 = 0$ and variance $q'_2 = 1$. For each $j \in [1,n]$ subtract the mean $q_j$ from $w_j$ and scaling the result by $\epsilon := 1/\sqrt{q_2 - q_1^2}$. The resulting sample space is $\Omega' = (w'_1, \dots, w'_n) = \left\{w_1\epsilon - q_1 \epsilon, w_2 \epsilon - q_1 \epsilon, \dots, w_n \epsilon - q_1 \epsilon \right\}$. That is, $w'_j = w_j \epsilon - q_1 \epsilon$ for $j \in [1,n]$. Again we make the assumption as in \Cref{s:problem} that $w_j = j$ for simplicity, so that we have $w'_j = j\epsilon - q_1\epsilon$. 

Now, we fix the support $[k,\ell]$ and break point $v_1$. Our final algorithm will enumerate over these all possible values of $k$, $\ell$ and $v_1$ in an outer loop. Given $k$, $\ell$, and $v_1$, the remaining decision variables are $\alpha_1$, $\alpha_2$, $r_1$, and $r_2$. The zeroth moment condition amounts to
\begin{equation}\label{eq:where-we-start}
\sum_{j = k}^{v_1-1} \alpha_1 r_1^{j - k} + \sum_{j = v_1}^\ell \alpha_2 r_2^{j - v_1} = 1
\end{equation}
and similarly for the first and second moment conditions. In order to reduce the degrees of freedom further we manipulate the sums in \eqref{eq:where-we-start} and introduce some additional notation. First of all, we let $\rho := \alpha_2$ and observe that we can express $\alpha_1$ in terms of $\alpha_2$ and $r_1$. Indeed, since we have at the middle point $\rho := \alpha_2 = \alpha_1 r_1^{v_1-k}$, we have $\alpha_1 = \rho/r_1^{v_1 - k}$, in which case we can rewrite \eqref{eq:where-we-start} as
\begin{equation}\label{eq:re-index}
\sum_{j = 1}^{\tilde k} \frac{\rho}{r_1^j} + \sum_{j = 1}^{\tilde \ell} \rho r_2^j + \rho = 1
\end{equation}
where we re-index the sums and set $\tilde k = v_1 - k$ and $\tilde \ell = \ell - v_1$. The three terms in \eqref{eq:re-index} are the probability mass on the left, right, and at the middle point. Finally, for reasons that will become apparent below, we will set $r_2 := e^\alpha$ and $r_1 := e^{-\beta}$ for nonnegative scalars $\alpha$ and $\beta$ so that \eqref{eq:re-index} becomes
\begin{equation}\label{eq:zeroth-moment-condition}
\sum_{j = 1}^{\tilde k} \rho e^{\beta j} + \sum_{j = 1}^{\tilde \ell} \rho e^{\alpha j} + \rho = 1
\end{equation}
Figure~\ref{fig:figure-out-confusing-indexes} may assist the reader in tracking the notation in \eqref{eq:where-we-start}--\eqref{eq:zeroth-moment-condition}.

\begin{figure}[t]
\centering
\includegraphics[width=0.65\textwidth]{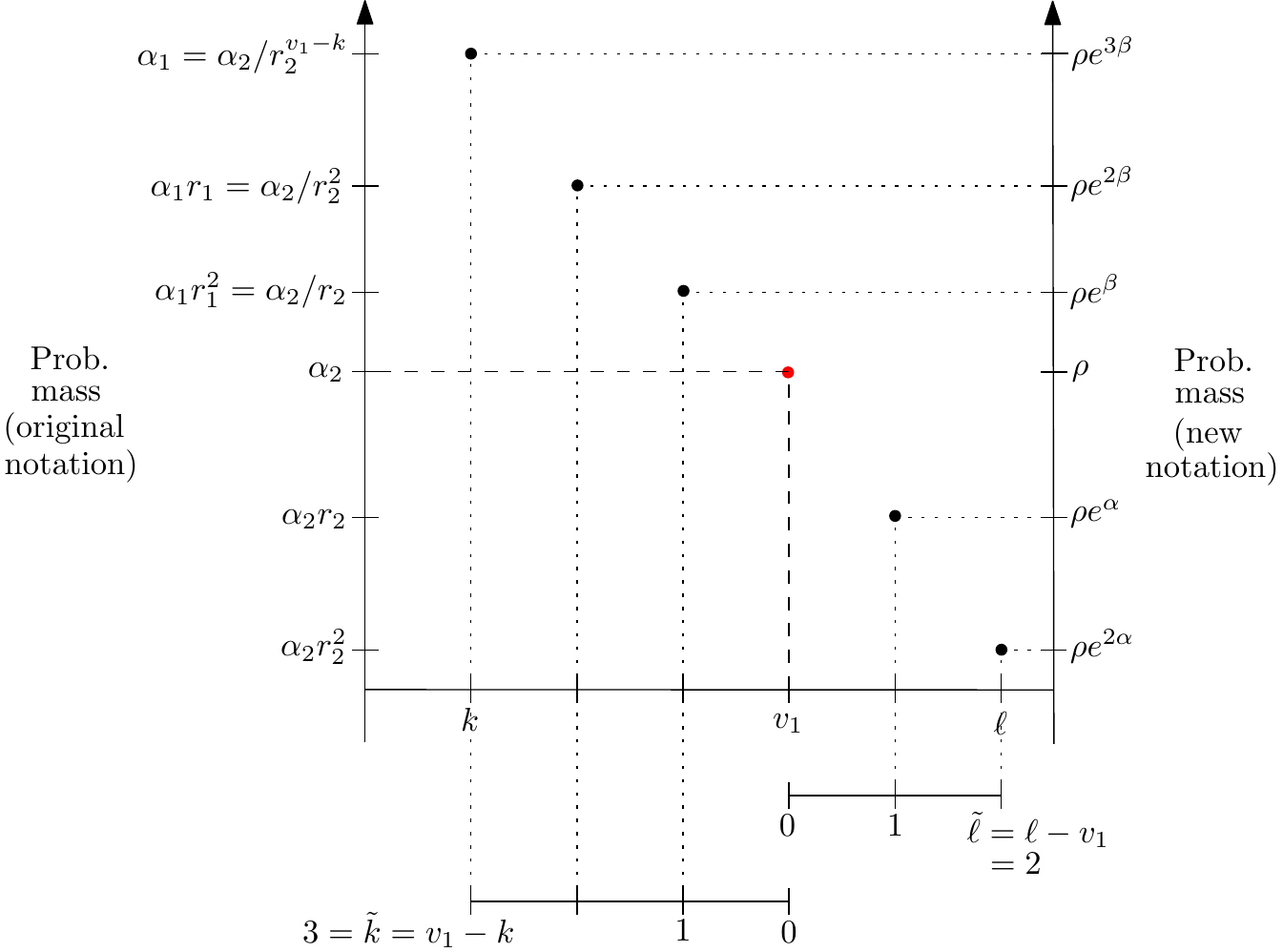}
\caption{Illustrating the notational development from \eqref{eq:where-we-start} to \eqref{eq:zeroth-moment-condition}.}
\label{fig:figure-out-confusing-indexes}
\end{figure}

For the first and second moments, we also define the $w'_j$ according to the indexing established in \eqref{eq:zeroth-moment-condition}. Moreover, we set $a = w'_{v_1} = v_1 \epsilon - q_1 \epsilon$, in which case the moment condition $\sum_{j = k}^\ell (w'_j)^ix_j = q_1'$ amounts to
\begin{equation}\label{eq:first-moment-condition}
\sum_{j = 1}^{\tilde k} \rho e^{\beta j} (a - j \epsilon) + \sum_{j = 1}^{\tilde \ell} \rho e^{\alpha j}(a + j \epsilon) + \rho a = 0.
\end{equation}
Similarly, the second moment condition is
\begin{equation}\label{eq:second-moment-condition}
\sum_{j = 1}^{\tilde k} \rho e^{\beta j} (a - j \epsilon)^2 + \sum_{j = 1}^{\tilde \ell} \rho e^{\alpha j}(a + j \epsilon)^2 + \rho a^2 = 1.
\end{equation}
Taken together, we have rephrased the problem to finding three unknowns -- $\alpha$, $\beta$ and $\rho$ -- in three equations \eqref{eq:zeroth-moment-condition}--\eqref{eq:second-moment-condition}. By first eliminating $\rho$, we get two equations in two unknowns:
\begin{align}
f(\alpha,\beta) &:=  \sum_{j=1}^{\tilde k} e^{\beta\, j}(a - j \epsilon) + \sum_{j=1}^{\tilde \ell} e^{\alpha \,j}(a + j \epsilon) + a = 0  \label{modified-moment-1}\\
g(\alpha,\beta) & := \sum_{j=1}^{\tilde k} e^{\beta\, j} + \sum_{j=1}^{\tilde \ell} e^{\alpha \,j} + 1  - \sum_{j=1}^{\tilde k} e^{\beta\, j}(a - j \epsilon)^2 - \sum_{j=1}^{\tilde \ell} e^{\alpha \,j}(a + j \epsilon)^2- a^2 = 0 \label{modified-moment-2}
\end{align}

The final step is to show that, given an $\alpha$, there is a unique choice of $\beta$ such that $f(\alpha, \beta) = 0$. Then, to identify common roots of $f(\alpha, \beta) = 0$ and $g(\alpha, \beta) = 0$ is equivalent to identifying the roots of a single equation $g(\alpha, h(\alpha)) = 0$, reducing the problem to a search for one unknown in one equation.

We achieve this final task by exploring monotonicity properties of $f$. First, a direct computation yields:
\begin{align*}
\frac{\partial f(\alpha,\beta)}{\partial \alpha} &= \sum_{j=1}^{\tilde \ell} j \cdot (a + j \epsilon)  e^{\alpha \,j} = \ex \left[\frac{X}{\rho} \cdot \frac{ X - a}{\epsilon}\cdot {\bone}_{X > a}\right] \\
\frac{\partial f(\alpha,\beta)}{\partial \beta} &= \sum_{j=1}^{\tilde k} j \cdot (a - j \epsilon)  e^{\beta \,j} = \ex \left[\frac{X}{\rho} \cdot \frac{a - X}{\epsilon}\cdot {\bone}_{X < a}\right].
\end{align*}
where $X$ is the discrete random variable with distribution $x = (x_1, \dots, x_n)$.
We then use the following technical lemma.

\begin{lemma}\label{lemma:technical-monotonic}
Suppose the polynomial $\phi(z) = \sum_{j=1}^{ M} a_j z^{i_j}$ with $z \in \R$ satisfies
\begin{equation}\label{coefficient-inequlity}a_1 \le a_2 \le \cdots \le a_{M}\quad \mbox{and}  \quad 1\le i_1 \le i_2 \le \cdots \le i_{M}.
\end{equation}
Then $\phi(z)$ has at most one root when $z > 0$ and is increasing on $\{ z \; | \; \phi(z) \ge 0 \}$.
\end{lemma}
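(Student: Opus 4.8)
The plan is to reduce the lemma to elementary monotonicity of the power maps $z\mapsto z^{s}$ on $(0,\infty)$ by pulling one carefully chosen monomial factor out of $\phi$. The only structural fact I would use about the coefficients is that a nondecreasing real sequence changes sign at most once, passing from negative to nonnegative: so I would fix an index $p\in\{0,1,\dots,M\}$ with $a_1,\dots,a_p<0\le a_{p+1},\dots,a_M$. I would dispose of the degenerate cases first. If all $a_j=0$ then $\phi\equiv0$ and there is nothing to prove; if $p=M$ then $\phi(z)<0$ for every $z>0$, so $\phi$ has no positive root and $\{z>0:\phi(z)\ge0\}$ is empty. Hence the substance is the case $p\le M-1$, where I set $d:=i_p$ (and $d:=i_1$ when $p=0$); note $d\ge i_1\ge1$.

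The heart of the argument is the factorization
\[
\phi(z)=z^{d}\,h(z),\qquad h(z):=\sum_{j=1}^{M}a_j\,z^{\,i_j-d}\qquad(z>0).
\]
The reason for choosing $d=i_p$ is that, since the exponents are nondecreasing, $i_j-d\le0$ for $j\le p$ and $i_j-d\ge0$ for $j>p$. Then for $j\le p$ the term $a_j z^{i_j-d}$ is nondecreasing in $z$ (a negative constant times a nonincreasing power), and for $j>p$ it is nondecreasing in $z$ (a nonnegative constant times a nondecreasing power). So $h$ is a finite sum of nondecreasing functions, hence nondecreasing on $(0,\infty)$, while $z\mapsto z^{d}$ is strictly increasing and positive there because $d\ge1$.

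Both conclusions then follow quickly. Since $\phi(z)=0\iff h(z)=0$ for $z>0$ and $h$ is monotone, the set $\{z>0:h(z)=0\}$ is an interval; if it were nondegenerate, $\phi$ would vanish on an interval, forcing the polynomial $\phi$ to be identically zero, which we have excluded. So $\phi$ has at most one positive root. For the monotonicity statement I would take $0<z_1<z_2$ with $\phi(z_1)\ge0$; then $h(z_1)\ge0$, hence $h(z_2)\ge h(z_1)\ge0$, and
\[
\phi(z_2)=z_2^{d}h(z_2)\ \ge\ z_1^{d}h(z_2)\ \ge\ z_1^{d}h(z_1)=\phi(z_1),
\]
with the first inequality strict when $\phi(z_1)>0$ (since then $h(z_1)>0$ and $z_2^{d}>z_1^{d}$), and with equality $\phi(z_2)=\phi(z_1)=0$ ruled out when $\phi(z_1)=0$ because that would give two positive roots. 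In all cases $\phi(z_1)<\phi(z_2)$, so $\phi$ is increasing on $\{z>0:\phi(z)\ge0\}$.

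I expect the only nonroutine point to be spotting the right monomial to factor out: taking $d=i_p$, the largest exponent attached to a strictly negative coefficient, is exactly what makes every piece of $h$ monotone in the same direction; a naive choice such as $d=i_1$ leaves $h$ non-monotone and the argument stalls. Everything after the factorization — including the bookkeeping for the degenerate cases — is routine.
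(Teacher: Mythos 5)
Your proof is correct, and it gets there by a route that is organized differently from the paper's. The paper splits $\phi=\phi_2-\phi_1$ into its nonnegative- and nonpositive-coefficient parts and, anchored at a candidate root $z_0$, compares $\phi_2(z_1)\ge (z_1/z_0)^{i_{m+1}}\phi_2(z_0)$ with $\phi_1(z_1)\le (z_1/z_0)^{i_m}\phi_1(z_0)$ to force $\phi>0$ to the right of $z_0$ and $\phi<0$ to the left, and then reuses the same ratio estimate to obtain monotonicity on $\{z:\phi(z)\ge 0\}$. You instead factor out the pivotal monomial $z^{d}$ with $d=i_p$, the largest exponent attached to a strictly negative coefficient, and note that the cofactor $h(z)=\phi(z)/z^{d}$ is a sum of nondecreasing terms on $(0,\infty)$; both conclusions then fall out of the representation $\phi=z^{d}h$ with $z^{d}$ positive and increasing and $h$ nondecreasing. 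The underlying mechanism is the same in both arguments (the coefficient sequence changes sign at most once and the exponents are ordered, which is exactly what makes $\phi(z)/z^{i_p}$ monotone), but your factorization makes the global structure explicit and delivers the uniqueness of the root and the monotonicity in one stroke, whereas the paper's argument is a pointwise comparison repeated for the two claims; your version is arguably cleaner. One shared caveat: if repeated exponents allow cancellation (e.g. $-z+z$), $\phi$ can vanish identically even though not all $a_j=0$, so your appeal to ``which we have excluded'' does not literally cover that case --- but the lemma itself fails there (every $z>0$ is a root), the paper's strict inequality breaks in the same situation, and the case never arises in the application, so this is a defect of the statement rather than of either proof.
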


It follows from~\eqref{eq:first-moment-condition} that $\ex \left[X \cdot {\bone}_{X > a}\right]=\sum_{j=1}^{\tilde k} e^{\alpha \,j}(a + j \epsilon) \ge 0$ and $\ex \left[X \cdot {\bone}_{X < a}\right] =\sum_{j=1}^{\tilde \ell} e^{\beta\, j}(a - j \epsilon) \le 0$. Now apply Lemma~\ref{lemma:technical-monotonic} to the polynomials $\sum_{j=1}^{\tilde k} e^{\alpha \,j}(a + j \epsilon)$ and $-\sum_{j=1}^{\tilde \ell} e^{\beta\, j}(a - j \epsilon) $ respectively (in the former, $z$ is $e^\alpha$ and $a_j = a + j \epsilon$). Supposing roots exist to these polynomials, define
\begin{equation*}
\alpha_0 =\min \{\alpha : \sum_{j=1}^{\tilde \ell} e^{\alpha \,j}(a + j \epsilon)=0 \}\;  \mbox{\ \ and \ \ }\;   \beta_0 = \min\{\beta : \sum_{j=1}^{\tilde k} e^{\beta\, j}(a - j \epsilon)=0 \}.
\end{equation*}
such that $\ex \left[X \cdot {\bone}_{X > a}\right] \ge 0$ if and only if $\alpha \ge \alpha_0$ and $\ex \left[X \cdot {\bone}_{X < a}\right] \le 0$ if and only if $\beta \ge \beta_0$. If roots do not exist set $\alpha_0 = -\infty$ and/or $\beta_0 = -\infty$. Thus it suffices to focus on the region where $\alpha \ge \alpha_0$ and $\beta \ge \beta_0$. As a result, when $\alpha \ge \alpha_0$,
\begin{equation*}
\ex \left[X \cdot  (X - a) {\bone}_{X > a}\right]= \left\{
\begin{array}{ll}
\ex \left[(X - a)^2 {\bone}_{X > a}\right]  + \ex \left[a \cdot  (X - a) {\bone}_{X > a}\right] \ge 0,  & \text{ if }\; a \ge 0\\
  \ex \left[ X^2 \,{\bone}_{X > a}\right]  - a \, \ex \left[X \cdot {\bone}_{X > a}\right] \ge 0, & \text{ if }\; a  < 0.
\end{array}
\right.
\end{equation*}
Similarly for $\beta \ge \beta_0$ we have
\begin{equation*}
\ex \left[X \cdot  (a - X) {\bone}_{X < a}\right]= \left\{
\begin{array}{ll}
  \ex \left[ - X^2 \,{\bone}_{X < a}\right]  + a \, \ex \left[X \cdot {\bone}_{X < a}\right] \le 0, & \text{ if }\; a \ge 0\\
 \ex \left[-(X - a)^2 {\bone}_{X < a}\right]  + \ex \left[a \cdot  (a - X) {\bone}_{X < a}\right] \le 0, & \text{ if }\; a  < 0.
\end{array}
\right.
\end{equation*}
In summary, we have $\frac{\partial f(\alpha,\beta)}{\partial \alpha} \ge 0$ when $\alpha \ge \alpha_0$ and $\frac{\partial f(\alpha,\beta)}{\partial \beta} \le 0$ when $\beta \ge \beta_0$.
This monotonicity yields our desired property that we can identify a mapping $h$ such that $\beta = h(\alpha)$.

To apply Newton's method to solve $g(\alpha, h(\alpha)) = 0$, must find the derivative with respect to $\alpha$. Observe that
\begin{eqnarray*}
\frac{\partial g(\alpha,\beta)}{\partial \alpha} = \sum_{j=1}^{\tilde \ell} j \cdot e^{\alpha \,j} - \sum_{j=1}^{\tilde \ell} j \cdot (a + j \epsilon)^2  e^{\alpha \,j} = \ex \left[\frac{1 - X^2}{\rho} \cdot \frac{ X - a}{\epsilon}\cdot {\bone}_{X > a}\right]
\end{eqnarray*}
and
\begin{eqnarray*}
\frac{\partial g(\alpha,\beta)}{\partial \beta} = \sum_{j=1}^{\tilde k} j \cdot e^{\beta\, j} - \sum_{j=1}^{\tilde k} j \cdot e^{\beta\, j}(a - j \epsilon)^2  = \ex \left[\frac{1 - X^2}{\rho} \cdot \frac{ a - X }{\epsilon}\cdot {\bone}_{X < a}\right],
\end{eqnarray*}
and so
\begin{eqnarray*}
\frac{\mathrm{d} g(\alpha, h(\alpha))}{\mathrm{d} \alpha} = \frac{\partial g(\alpha, \beta)}{\partial \alpha} + \frac{\partial g(\alpha, \beta)}{\partial \beta}\cdot \frac{\partial h(\alpha)}{\partial \alpha} = \frac{\partial g(\alpha, \beta)}{\partial \alpha} + \frac{\partial g(\alpha, \beta)}{\partial \beta}\cdot \frac{- \frac{\partial f(\alpha,\beta)}{\partial \alpha}}{\frac{\partial f(\alpha,\beta)}{\partial \beta}}.
\end{eqnarray*}
Using this derivative, Newton's method on the interval $[\alpha_0, +\infty]$ of real numbers  finds all roots of $g(\alpha, h(\alpha))$.

As a final note, when we get the solution of pairs of $\alpha, \beta$, we only include those that satisfy the inequality $\alpha + \beta \le 0$, which is translated from the log-concave constraint on the middle index $v_1$.

\subsection{Numerical results}\label{ss:numerical-results}

To illustrate the performance of the proposed computational approach, we implement it on a concrete example that appears in the literature \citep{subasi2009discrete}. We note that the main focus of the paper is the theoretical properties for global optima of shape-constrained discrete moment problems instead of developing fast algorithms. Therefore, we provide this example only for illustrative purposes. A more in-depth investigation of efficient computation methods for general problems will be left as future work.

In \cite{subasi2009discrete}, the authors aim at solving a specific discrete moment problem (Example~4) with log-concave constraint \eqref{eq:DMP-LC-2moment}. However, their methodology requires relaxing the constraint to be unimodal, which they solve via a linear programming. As a type of benchmark, we compare the bounds that can be derived by our method with theirs. In detail, the specific example we solve is \eqref{eq:DMP-LC-2moment} with data specified in \cref{table:numerical} below. The sample space (before scaling) is always the natural numbers up to $n-1$, i.e., $w_j = j - 1$.

Our benchmark calculations use the unimodal relaxation of \cite{subasi2009discrete}, described below in our notation.
\begin{subequations}
\begin{align*}
\max_{K} \ \max_{x \in \R^n} \  & \sum_{j=1}^n f_j x_j \\
 \text{s.t. } & \sum_{j=1}^n w_j^i x_j = q_i \text{ for } i \in [0, 2] \\
             & x_j \le x_{j+1} \text{ for } j \in [1, K-1]\\
             & x_j \ge x_{j+1} \text{ for } j \in [K, n-1].
\end{align*}
\end{subequations}
where $K$ is the ``mode'' of the distribution. Instead of moment constraints, we use the binomial moment constraints of \cite{subasi2009discrete}, i.e.
\begin{align*}
	\sum_{j = 1}^n \binom{w_j}{i} x_j = S_i,  \text{ for } i \in [0, 2]
\end{align*}
where the data $S_0,S_1,S_2$ can be transformed to moment data $q_0,q_1,q_2$ via the linear transformation: $q_0 = S_0$, $q_1 = S_1$, and $q_2 = 2S_2 + S_2$. Note that this linear transformation can be extended to higher moments, see \cite[Section~5.6]{prekopa2013stochastic} for details. The objective function is the probability mass on the positive values of $w_j$, i.e., $\Pr(X \geq 1)=\sum_{j = 1}^n f_j x_j = \sum_{j=2}^n x_j$ and provides an upper bound on the tail probability given the first two moments. Optimizing the negative of this objective also allows us to calculate lower bounds on tail probabilities. The results are shown in \cref{table:numerical}.

\begin{table}[!h]
\centering
\caption{Numerical result of the bounds for the total probability for non-negative values  with different constraints.}
\begin{tabular}{|l|l|l|l|l|l|l|}
\hline
\multicolumn{3}{|l|}{} & \multicolumn{2}{l|}{Unimodal} & \multicolumn{2}{l|}{Log-concave} \\ \hline
$n$    & $S_1$   & $S_2$   & LB              & UB          & LB              & UB             \\ \hline
5    & 1.9    & 1.3    & 0.8750          & 1           & 0.9000          & 1              \\ \hline
5    & 2.1    & 1.3    & 0.9750          & 1           & 0.9920          & 1              \\ \hline
5    & 1.9    & 1.7    & 0.8000          & 1           & 0.8094          & 0.8433         \\ \hline
11   & 5.2    & 13.1   & 0.9482          & 1           & 0.9684          & 1              \\ \hline
11   & 4.6    & 13.1   & 0.8745          & 1           & 0.8924          & 0.9026         \\ \hline
11   & 5.2    & 15.1   & 0.9208          & 1           & 0.9310          & 0.9921         \\ \hline
\end{tabular}
\label{table:numerical}
\end{table}

The LC constraint gives tighter lower and upper bounds in all cases. This is to be expected, since the unimodal relaxation is clearly a relaxation and so by solving the original log-concave version of the problem we are able to achieve tighter lower and upper bounds.


\section{Conclusion}\label{s:conclusion}

In summary, we use a reverse convex optimization approach to characterize optimal extreme point distributions for moment problems with reverse convex shape constraints. This characterization allowed us to design an exact low-dimensional algorithm for solving these problems to optimality.

There are several possible directions to apply and build on the results in this paper that we leave as future work. First, there are specific applications of robust optimization where log-concave or IFR distributions are common. One standard example is the robust newsvendor problem originally studied by \cite{scarf1958min} where having structural solutions to the second-stage moment problem can be useful in characterizing optimal inventory strategies. Second, although these results are for the discrete moment problem we believe there is scope to extend them through limiting arguments to the continuous case. Lastly, there is room to more deeply explore implementations of our computational approach that pays attention to issues of numerical stability and scaling properties.




\bibliographystyle{plainnat}
{\small

}

\appendix

\section{Appendix: Technical proofs}

\subsection{Proof of \cref{prop:equivalent-formulations-lc}.}

Setting $u = v = 1$ in \eqref{eq:dmp-good-constraints} specializes to \eqref{eq:dmp-bad-constraints}. Further, \eqref{eq:dmp-good-constraints} guarantees a consecutive support: if there exist $j_1<j_2<j_3$ such that $x_{j_1},x_{j_3} > 0$, $x_{j_2} = 0$, by setting $u = j_2 - j_1, v = j_3 - j_2$, the constraint is $x_{j-u}^v x_{j+v}^u \le x_j^{u+v}$ violated. Hence every feasible distribution of (DMP-LC') is a feasible distribution of (DMP-LC) with the same objective value (note that the objectives of both problems are identical).

On the order hand, any feasible distributions to problem \eqref{eq:DMP-LC} with support $[k,\ell]$ satisfies \eqref{eq:dmp-good-constraints} and by a straightforward induction starting with \eqref{eq:dmp-bad-constraints} as a base case we can argue that $ x_{j-u}^v x_{j+v}^u \le x_j^{u+v}$ holds for $j \in (k, \ell), j-u \ge k, j + v\le \ell$.%
%
%
\footnote{To give a concrete example, we show how to derive the inequality $x_j^3 \ge x_{j-2} x_{j + 1}^2$ ($u = 2$ and $v = 1$) starting from \eqref{eq:dmp-bad-constraints}. From \eqref{eq:dmp-bad-constraints} we have the two constraints: $x_j^2 \ge x_{j-1}x_{j+1}$ and $x_{j-1}^2 \ge x_{j-2}x_j$. Dividing the left-hand side of the former by the right-hand side of the latter (and vice versa) yields the inequality $x_jx_{j-1} \ge x_{j-2}x_{j+1}$. Hence, starting from $x_j^2 \ge x_{j-1}x_{j+1}$ and multiplying both sides by $x_j$ yields: $x_j^3 \ge x_j x_{j-1}x_{j+1} \ge x_{j-2}x_{j+1}^2$, as required.}
%
%
 For those points such that $j-u$ or $j+v$ is outside the support, or the middle point $j$ outside the support, the constraint $ x_{j-u}^v x_{j+v}^u \le x_j^{u+v}$ holds naturally since the left hand side is zero for these cases. In other words, \eqref{eq:dmp-good-constraints} is satisfied. Hence every feasible distribution of (DMP-LC) is a feasible distribution of (DMP-LC') with the same objective value.

\subsection{Proof of \cref{prop:set-convex-LC}.}

We first prove a preliminary lemma for establishing \cref{prop:set-convex-LC}.

\begin{lemma}\label{lemma:set-convex-LC} The set $\{ (x,y,z): x^{u} y^{v} \ge z^{u + v}, x \ge 0, y \ge 0, z \ge 0 \}$ is convex for any positive integers $u$ and $v$.
\end{lemma}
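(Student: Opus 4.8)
The plan is to show that the set $K := \{(x,y,z) \in \R^3_+ : x^u y^v \ge z^{u+v}\}$ is convex by exhibiting it as a sublevel (or superlevel) set of a concave function on $\R^3_+$. The natural candidate is the function $g(x,y,z) := x^{u/(u+v)} y^{v/(u+v)} - z$, since $x^u y^v \ge z^{u+v}$ with all quantities nonnegative is equivalent (after taking $(u+v)$-th roots, which is monotone on $\R_+$) to $x^{u/(u+v)} y^{v/(u+v)} \ge z$, i.e. $g(x,y,z) \ge 0$. Thus $K = \{(x,y,z) \in \R^3_+ : g(x,y,z) \ge 0\}$ is the superlevel set of $g$ at level $0$, and it suffices to prove that $g$ is concave on $\R^3_+$.

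To see that $g$ is concave, write $g = \psi - z$ where $\psi(x,y) := x^{\lambda} y^{1-\lambda}$ with $\lambda = u/(u+v) \in (0,1)$. The map $(x,y) \mapsto x^\lambda y^{1-\lambda}$ is the classical weighted geometric mean, which is concave on $\R^2_+$; this is a standard fact and can be justified either by checking that its Hessian is negative semidefinite on $\R^2_{++}$ (and extending to the boundary by continuity), or by appealing directly to the weighted AM--GM inequality $x^\lambda y^{1-\lambda} \le \lambda x + (1-\lambda) y$ together with positive homogeneity of degree one, which together give concavity. Subtracting the linear function $z$ preserves concavity, so $g$ is concave on $\R^3_+$, and therefore $K$ is convex.

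The only mild subtlety — and the step I would be most careful about — is the behavior on the boundary of the orthant, where the power functions have exponents less than one and the Hessian argument degenerates. This is handled cleanly by using the AM--GM formulation rather than second derivatives: the inequality $x^\lambda y^{1-\lambda} \le \lambda x + (1-\lambda)y$ holds for all $x, y \ge 0$ (with the conventions $0^0 = 1$ not even needed since $\lambda \in (0,1)$), and combined with homogeneity this yields concavity on all of $\R^3_+$ without any case distinction. One should also note that raising to the $(u+v)$-th power and taking $(u+v)$-th roots are monotone bijections of $\R_+$, so the equivalence of $x^u y^v \ge z^{u+v}$ and $g \ge 0$ is valid on the whole nonnegative orthant; this is where the nonnegativity hypothesis is essential, and indeed (as the remark after \cref{prop:set-convex-LC} shows) the statement fails without it.
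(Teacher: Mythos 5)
Your proof is correct, but it takes a genuinely different route from the paper's. You establish convexity directly: after taking $(u+v)$-th roots (valid on the nonnegative orthant), the set is the superlevel set $\{g \ge 0\}$ of the concave function $g(x,y,z) = x^{\lambda}y^{1-\lambda} - z$ with $\lambda = u/(u+v)$, and concavity of the weighted geometric mean follows from weighted AM--GM plus degree-one homogeneity. The paper instead cites a conic-quadratic representability result of Ben-Tal and Nemirovski: the hypograph of the geometric mean of $2^t$ variables (with $u+v \le 2^t$) is second-order-cone representable, hence convex, and intersecting it with the linear constraints that identify blocks of coordinates with $x$, $y$, and $z$ recovers exactly the set $\{x^u y^v \ge z^{u+v},\, x,y,z \ge 0\}$. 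The two arguments encode the same underlying fact (concavity of the geometric mean on the orthant), but yours is elementary and self-contained, while the paper's leans on an external representability theorem and would additionally yield an explicit SOCP description of the set --- a feature the paper does not actually exploit. One small caveat: your claim that AM--GM plus homogeneity gives concavity on all of $\R^3_+$ ``without any case distinction'' is slightly optimistic, since the standard superadditivity argument divides by $x_1+x_2$ and $y_1+y_2$ (or by $\psi$ at the convex combination), so the degenerate cases where these vanish must be noted separately; they are trivial, however (both sides are zero because $\lambda \in (0,1)$), so this does not affect the validity of your proof.
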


\begin{proof}[Proof of \cref{lemma:set-convex-LC}]
For any integers $u$ and $v$, let $t$ be an integer such that $u + v \le 2^t$. From point $11$ on page $95$ of \cite{ben2001lectures}), the set $\{ (x_1,\cdots, x_{2^t},z) : x_j \ge 0, \; 1\le j \le 2^t,\; z \le (\Pi_{j=1}^{2^t} x_j)^{1/2^t} \}$ is conic-quadratic representable, and thus convex. Therefore, when intersecting with linear constraints, the set
\begin{eqnarray*}
&S : = \{ (x,y,w) : x_j \ge 0, \; 1\le j \le 2^t,\; 0 \le z \le (\Pi_{j=1}^{2^t} x_j)^{1/2^t},\; x_j = x, \; 1\le j \le u,\\
& x_j = y, \; u+1\le j \le u+v, \; x_j = z, \; u+v+1 \le j \le 2^t    \}
\end{eqnarray*}
is convex as well. When $(x,y,z) \in S$, $z \le (\Pi_{j=1}^{2^t} x_j)^{1/2^m} $ is equivalent to $z \le (x^{u} y^{v} z^{(2^t - u - \ell)})^{1/2^t} $, which can be further rewritten as $  z^{u + v} \le x^{u} y^{v}$. Consequently, it is straightforward to verify that $S = \{ (x,y,z) : x^{u} y^{v} \ge z^{u + v}, x \ge 0, y \ge 0, z \ge 0 \}$, and the conclusion follows.
\end{proof}

\begin{proof}[Proof of \cref{prop:set-convex-LC}]

First observe that

\begin{equation*}
S:=\{ (x,y,z) : x^{u} y^{v} > z^{u + v},\; x \ge 0, y \ge 0, z \ge 0 \} = \bigcup_{\epsilon>0} S(\epsilon),
\end{equation*}
where $S(\epsilon) = \{ (x,y,z) : x^{u} y^{v} \ge (z+\epsilon)^{u + v},\; x \ge 0, y \ge 0, z \ge 0 \}$. Then for any $(x_1,y_1,z_1)$ and $(x_2,y_2,z_2) $ in $S$, there exist $\epsilon_1$ and $\epsilon_2$ such that $(x_1,y_1,z_1) \in S(\epsilon_1)$ and $(x_2,y_2,z_2) \in S(\epsilon_2)$. Without loss of generality, we assume that $\epsilon_1 \ge \epsilon_2$. As a result, we have that $S(\epsilon_1) \subset S(\epsilon_2) $ and $(x_1,y_1,w_1) \in S(\epsilon_2)$. Moreover, according to \cref{lemma:set-convex-LC}, $S(\epsilon_2)$ is a convex set. That is $\alpha (x_1,y_1,z_1) + (1 - \alpha)(x_2,y_2,z_2) \in S(\epsilon_2) \subset S$ for any $0 \le \alpha \le 1$. Therefore, since the union of convex sets is convex, $S$ is convex as desired.
\end{proof}

\subsection{Proof of \cref{lemma: IFR-tail-definition}.}
According to \cref{definition:ifr}, we have the following inequality if $x$ is an IFR distribution:
\begin{equation*}
\tfrac{x_j}{\sum_{k=j}^n x_k} - \tfrac{x_{j+1}}{\sum_{k=j+1}^n x_k} \le 0, \text{ for } j \in [1,n-1].
\end{equation*}
This is equivalent to
\begin{align}\label{eq:tail-equivalent-1}
x_j \sum_{k=j+1}^n x_k - x_{j+1} \sum_{k=j}^n x_k \le 0, \text{ for } j \in [1,n-1].
\end{align}
While if $\{\bar F_1,\dots, \bar F_n\}$ is log-concave, we have
\begin{equation*}
\sum_{k=j-1}^n x_k \sum_{k=j+1}^n x_k -\left(\sum_{k=j}^n x_k\right)^2 \le 0, \text{ for } j \in [2,n-1].
\end{equation*}
This is equivalent to
\begin{align}\label{eq:tail-equivalent-2}
x_j \sum_{k=j+1}^n x_k - x_{j+1} \sum_{k=j}^n x_k \le 0, \text{ for } j \in [1,n-2].
\end{align}
Inequality \eqref{eq:tail-equivalent-1} and \eqref{eq:tail-equivalent-2} are exactly the same except that \eqref{eq:tail-equivalent-2} does not include the case where $j = n-1$. In this case the inequality holds naturally: $x_{n-1}x_n-x_n(x_{n-1}+x_n)\le 0$. Thus the two definitions of IFR distribution are equivalent.

\subsection{Proof of \cref{theorem:restrict-attention-to-extreme-points} and \cref{lemma:extreme-points-in-set}.}
\label{sec:proof_restrict-attention-to-extreme-points}

\begin{proof}[Proof of \cref{lemma:extreme-points-in-set}]

The fact that $\ext \cch S \subseteq S$ follows immediately from \cite[Theorem~3.5]{klee1957extremal}. Suppose, by way of contradiction, that there exists an $x \in \ext \cch S$ that is not an extreme point of $S$. Then there exists $y, z \in S$ with $y \neq z$ such that $x = \lambda y + (1- \lambda) z$ where $\lambda > 0$. However, since $y,z \in \cch S$ this contradicts that $x \in \ext \cch S$. The result then holds.

\end{proof}

With \cref{lemma:s-convex}, \cref{lemma:closed-convex-full-compact}, and \cref{lemma:extreme-points-in-set} in hand, we can now establish \cref{theorem:restrict-attention-to-extreme-points}.

\begin{proof}[Proof of \cref{theorem:restrict-attention-to-extreme-points}]
The problem $\min \left\{ c^\top  x: x \in \cch S\right\}$ has an optimal extreme point solution $x^* \in \ext \cch S$ by \cref{lemma:s-convex} and the fact that $\cch S$ is a compact convex set by \cref{lemma:closed-convex-full-compact}. Since $S \subseteq \cch S$ we know $\min \left\{ c(x) : x \in \cch S\right\} \le \min \left\{ c(x) : x \in  S\right\}$. However, since $x^* \in S$, by \cref{lemma:extreme-points-in-set} we have $
c(x^*) = \min \left\{ c(x) : x \in \cch S\right\} \le \min \left\{ c(x) : x \in S \right\} \le c(x^*)$, since $x^*$ is optimal to the minimization over $\cch S$ and feasible to the minimization over $S$. However, this means all inequalities must be equalities and so $\min \left\{ c(x) : x \in S\right\} = c(x^*)$. Since $x^* \in \ext S$ by \cref{lemma:extreme-points-in-set}, this implies \eqref{eq:linear-optimization-problem} has an optimal extreme point solution.
\end{proof}

\subsection{Proof of \cref{lemma:boundary-conditions}.}
\label{sec:proof_boundary-conditions}
Since $c$ is lower-semicontinuous and quasiconcave and $X$ is compact, \cref{theorem:restrict-attention-to-extreme-points} implies that there exists an optimal extreme point solution $x^*$. Let $C_p = \R^n \setminus R_p$. Then $C_p$ is an open convex set, since $R_p$ is closed and reverse convex. Since $x^* \in X$ then $x^* \notin C_p$ for all $p$. For all $p$, let $y_p$ be such that $\dist(x^*, \cl (C_p)) = \dist(x^*, y_p)$;
that is, $y_p$ minimizes the distance between $x^*$ and the closure of $C_p$. Note that $\bd(C_p)= \bd(R_p)$, by definition. Clearly, $y_p \in \bd(R_p)$, for all $p$. If $\dist(x^*, \cl (C_p)) = 0$ then take $y_p = x^*$. In this case, $x^* \in \bd(R_p)$.

Using the vector $y_p$ we can define for all $p \in [1,P]$ a supporting hyperplane of $\cl(C_p)$ with normal $\alpha_p$ and right-hand side $\beta_p$ that weakly separates $C_p$ from the point $x^*$. These hyperplanes define the polyhedron $\hat X= \{ x : \alpha_p^\top x \ge \beta_p, \text{ for } p \in [1,P]\}$ that is incribed in $X$. In the special case that $y_p \neq x^*$, the hyperplane $\{ x : (x^* - y_p)^\top (x - y_p) \le 0 \}$ does the trick, by the standard projection theorem. Note that $x^* \in \hat X$ and, moreover, $\hat X \subseteq X$. Indeed, since $\alpha_p^\top x \le \beta_p$ is a supporting hyperplane of $\cl(C_p)$ then, the set of $x$ that satisfy $\alpha_p^\top x \ge \beta_p$ lie on the boundary of $C_p$ or outside of $C_p$. Such an $x$ lies entirely inside of $R_p$. This implies $\hat X$ is a subset of $R_p$ for all $p$, and so $\hat X \subseteq X$.

Consider the optimization problem
\begin{align}\label{eq:linear-problem}
\begin{split}
\min \ &  c(x) \\
s.t. \ & \alpha_p^\top x \ge \beta_p  \text{ for } p \in [1,P].
\end{split}
\end{align}
Since $x^* \in \hat X \subseteq X$ and $x^*$ is an optimal solution of the original problem, $x^*$ is optimal solution of \eqref{eq:linear-problem}. Moreover, $x^*$ is an extreme point of $\hat X$ and so at least $n$ linearly independent tight constraints at $x^*$, by the characterization of extreme points of polyhedra \cite[Theorem~2.3]{bertsimas1997introduction}. Hence at least $n$ of the inequalities $\alpha_p^\top x \ge \beta_p$ must be tight at $x = x^*$. The points in $\hat X$ that satisfy $\alpha_p^\top x = \beta_p$ are boundary points of $R_p$. Hence, $x^*$ lies on the boundary of at least $n$ of the sets $R_p$.
\endproof


\subsection{Proof of Claim~\ref{complete-separation-Xp} in the Proof of \cref{Thm:general-nonconvex}.}
\label{sec:proof_claim-complete-seperation}
We employ the following two subclaims.

\begin{subclaim}\label{claim:y-convex-open}
The set $Y_p$ is a convex and open set for $p=1,\ldots, P$.
\end{subclaim}

\noindent \emph{Proof of \cref{claim:y-convex-open}:}  By assumption, $S_1 := \{ x: f_p(x) > 0, x \ge 0 \}$ is convex. Therefore, $S_2 := \{x: x >= 0 , f(x)>0\} \cap \{ x : Ax = b \}$ is also a convex set since we are interesting $S_1$ with the convex set $\{x : Ax = b\}$. Moreover, the set $S_3 :=  S_2 \cap \{ x : x_S > 0, x_{\bar S} = 0\}$ is again convex since $\{ x : x_S > 0, x_{\bar S} = 0\}$ is a convex set. Finally, consider the affine map $y \mapsto (By + x^*S, 0_{\bar S})$. Note that $Y_p$ is the inverse image of this map and is therefore convex.


Moreover, for any $y_1 \in Y_p$, let
\begin{equation*}
0 < \delta = \min \{  (B y_1 + x^*_S)_j, j=1,\ldots,|S| : f_p(B y_1 + x^*_S; 0) >0 \}.
\end{equation*}
Since $f_p(\cdot)$ is continuous, there exists an $\epsilon > 0$ such that for any $\| y - y_1 \|_2 \le \epsilon$ we have
\begin{equation*}
\min \{  (B y + x^*_S)_j, j=1,\ldots,|S| : f_p(B y + x^*_S; 0) >0  \} \ge \delta/2 >0.
\end{equation*}
Thus $y \in Y_p$ and $Y_p$ is open. This completes the proof of \cref{claim:y-convex-open}. \qquad $\dagger$
\vskip 5pt
Moreover, we have a ``strong separation property'' of $Y_p$ described as follows.
\begin{subclaim}\label{strong-separation-Yp}
There exists a $d_p \neq 0$ and $\hat \beta_p >0$ such that
\begin{equation}\label{eq:separation-Yp}
\left\{ \begin{array}{rl}
d_p^{\top} y \ge \hat{\beta}_p > 0, \text{ for } y \in Y_p & \mbox{if}\quad 0 \not\in \cl(Y_p) \\
d_p^{\top} y > 0, \text{ for } y \in Y_p & \mbox{if}\quad 0 \in \cl(Y_p)
\end{array}\right.
\end{equation}
Moreover, letting $g_p(y) = f_p(By + x^*_S;0)$ and assuming $\nabla g_p(0) \neq 0$, if $0 \in \cl(Y_p)$ then $\nabla g_p(0)^{\top} y > 0$ for all $y \in Y_p$.
\end{subclaim}
\noindent \emph{Proof of \cref{strong-separation-Yp}:}
Note that $f_p(B\cdot 0 + x^*_S; 0) =  f_p(x^*_S; 0) = f_p(x^*) \le 0$, thus $0 \not\in Y_p$.
Since by \cref{claim:y-convex-open} $Y_p$ is convex, $\cl(Y_p)$ is both closed and convex. Then when $0 \not\in \cl(Y_p)$, by the strong separation theorem for closed convex sets (see, for instance, \cite[Corollary~5.80]{hitchhiker}), there exist $d_p \neq 0$ and $\hat{\beta}_p >0$ such that $d_p^{\top} y \ge \hat{\beta}_p > 0$ for $y \in Y_p$. In the case of $0 \in \cl(Y_p)$, weak separation holds; that is there exists an $\hat \alpha_p \neq 0$ such that $\hat \alpha_p^{\top} y \ge 0^{\top} y  = 0$ for $y \in Y_p$. Together this yields \eqref{eq:separation-Yp}.

To establish the ``moreover'', note that $g_p(y) = f_p(By + x^*_S;0) \ge 0$ for any $y \in \cl(Y_p)$. Hence, $g_p(0) = f_p (x^*_S;0)=f_p(x^*) \le 0$. Combining these two facts gives that $g_p(0) = 0$ when $0 \in \cl(Y_p)$. That is, $0$ is a global minimizer of the problem
\begin{equation*}
\begin{array}{ll}
\min & g_p(y)\\
\mbox{s.t.}& y \in \cl(Y_p).
\end{array}
\end{equation*}
Thus the following optimality condition in the form of variational inequality holds: $\nabla g_p(0)^{\top} (y - 0) \ge 0$ for $y \in \cl(Y_p)$, which trivially leads to $\nabla g_p(0)^{\top} y \ge 0$ for $y \in Y_p$. Since $Y_p$ is open, we get strict separation $\nabla g_p(0)^{\top} y > 0$ for all $y \in Y_p$. This completes the proof of \cref{strong-separation-Yp}. \qquad $\dagger$ \vskip 5pt
We are now ready to prove \cref{complete-separation-Xp}. We show that \eqref{general-separation} holds with $\hat \alpha_p = ( B(B^{\top}B)^{-1}d_p; \gamma_p)$ with $d_p$ being defined in \cref{strong-separation-Yp} and any $\gamma_p \in \BR^{n - |S|}$ and $\hat \beta_p$ as constructed in \cref{strong-separation-Yp}. Indeed, for any $x \in X_p$, due to \eqref{transformation}, we can find a $y \in Y_p$ such that $x = (B y + x^*_S;0) = (By;0)+x^*$.
Consequently,
\begin{equation*}
\hat \alpha_p^{\top} (x - x^*)  =d_p^{\top} (B^{\top}B)^{-\top}B^{\top}By + \gamma_p^{\top} 0= d_p^{\top}y.
\end{equation*}
Then according to \cref{strong-separation-Yp}, \eqref{general-separation} holds.

To establish the ``moreover'' of \cref{complete-separation-Xp}, observe that when $\nabla g_p(0) \neq 0$ and $0 \in \cl(Y_p)$, by letting $\hat \alpha_p = ( B(B^{\top}B)^{-1}\nabla g_p(0); \gamma_p)$ with any $\gamma_p \in \BR^{n - |S|}$, we have $\hat \alpha_p^{\top} (x - x^*) > 0$ for $x \in X_p$. The argument here is analogous to what we used when establishing \eqref{general-separation}.

Now, suppose $[ \nabla f_p(x^*) ]_S \not\in \mathcal{L}(S) $
and $0 \in \cl(Y_p)$. We argue that
\begin{equation}\label{gradient-separation}
\nabla f(x^*)^{\top}(x - x^*)  > 0, \text{ for }  x \in X_p.
\end{equation}
First a direct computation yields
\begin{equation}\label{gradient-Gp}
\nabla g_p(0) = [B^{\top}\,0]\, \nabla f_p(B\, y + x^*_{S}; 0 )\big{|}_{y=0} = [B^{\top}\,0]\, \nabla f_p(x^*_{S}; 0 ) = [B^{\top}\,0]\,\nabla f_p(x^*) = B^{\top} [\nabla f_p(x^*)]_S .
\end{equation}
Since $[\nabla f_p(x^*)]_S \not\in \mathcal{L}(S) $, we have $\nabla g_p(0) \neq 0$. Otherwise,  due to \eqref{gradient-Gp} $[ \nabla f_p(x^*_S) ]_S$ belongs to the null space of $B^{\top}$, which is exactly $\mathcal{L}(S)$, giving rise to a contradiction.
For any $x \in X_p$, $A_S \, (x_S - x^*_S) = A_S \, x_S - A_S\, x^*_S = 0$, thus $x_S - x^*_S \in \mbox{Null}(A_S)$. Moreover, recall that the columns of $B$ span the whole $\mbox{Null}(A_S)$; then there exists a
$\theta \neq 0$ such that $x_S - x^*_S  = B \, \theta$. Now let $\hat \alpha_p = \left(B (B^{\top}B)^{-1} B^{\top} \nabla f_p(x^*_S) ; \gamma_p\right)$ with any $\gamma_p \in \BR^{n - |S|}$. According to \eqref{general-separation} we have
\begin{eqnarray*}
\nabla f(x^*)^{\top}(x - x^*) &=& (\nabla f(x^*)-\hat \alpha_p)^{\top}(x - x^*)  + \hat \alpha_p^{\top}(x - x^*) \\
& = & \left( [\nabla f(x^*)]_S - B(B^{\top}B)^{-1}B^{\top}\, [\nabla f_p(x^*)]_S \right)^{\top}(x_S - x^*_S)  + \hat \alpha_p^{\top}(x - x^*) \\
& = &  [\nabla f_p(x^*)]_{S}^{\top} \left( I - B (B^{\top}B)^{-1}B^{\top}\right) B \, \theta+ \hat \alpha_p^{\top}(x - x^*) \\
& = & \hat \alpha_p^{\top}(x - x^*)  >0.
\end{eqnarray*}
Thus \eqref{gradient-separation} holds, completing the proof of \cref{complete-separation-Xp}.

\subsection{Proof of Theorem \ref{Thm-IFR}.}
\label{sec:proof_Thm-IFR}
Since $1 = y_1 \ge \dots \ge y_n \ge 0$, the feasible region of (DMP-IFR') is closed and bounded and so Theorem~\ref{theorem:restrict-attention-to-extreme-points} implies there exists an optimal extreme point solution $y^*$. The existence of the three subintervals $[1,j_1),[j_1,j_2),[j_2,n]$ for $y^*$ is argued above the theorem and let $[1,k]$ be the support of $y^*$ (here $k = j_2 - 1$). Consider the following problem:
\begin{subequations}\label{eq:DMP-IFR-sub}
\begin{align}
\max_{y \in \R^k} \  & \sum_{j=1}^n f_j (y_j - y_{j+1}) \\
\text{s.t. } & \sum_{j=1}^n (w_j^i-w_{j-1}^i) y_j = q_i \text{\ \ for } i \in [0, m]  \\
             & y_{j-1}y_{j+1} \le y_j^2 \text{\ \ for } j \in (1, k) \label{eq:sub-logconcave} \\
             & y_j - y_{j+1} \ge 0 \text{\ \ for } j \in [1, k) \label{eq:sub-monotonicity} \\
             & y_j \ge 0 \text{\ \ for } j \in [1, k] \label{eq:sub-nonnegativity},
\end{align}
\end{subequations}
which is the subproblem of (DMP-IFR') throwing away the last $n - k$ indices. It is easy to verify that every feasible solution of \eqref{eq:DMP-IFR-sub} with padding $n - k$ many 0's in the end is also feasible for (DMP-IFR'), and $y^*$ with its $k$ nonzero element is feasible to \eqref{eq:DMP-IFR-sub}. For simplicity, we denote this truncated $y^*$ as $y^*$ in the following argument. We conclude here that $y^*$ is also an optimal extreme point solution for \eqref{eq:DMP-IFR-sub}. Note that none of the nonnegativity constraints \eqref{eq:sub-nonnegativity} are tight at $y^*$ since $[1,k]$ is its support.

We now apply \cref{Thm:general-nonconvex} to \eqref{eq:DMP-IFR-sub} to describe the structure of the optimal extreme point solution $y^*$. First, we must verify the conditions of the theorem. The log-concavity constraints \eqref{eq:sub-logconcave} are reverse convex w.r.t. $\R^n$ by \cref{prop:set-convex-LC}, and the monotonicity constraints \eqref{eq:sub-monotonicity} are linear, thus also reverse convex. Here we assume that $m+1 < k$ otherwise the problem is trivial by solving linear equations. Thus by the theorem there are at least $k-m-1$ tight constraints among \eqref{eq:sub-logconcave} and \eqref{eq:sub-monotonicity}. To further refine this conclusion we verify the conditions of the ``moreover'' part of Theorem~\ref{Thm:general-nonconvex}. To do so, we look at the gradients of the constraint functions. We want to argue that the gradients of the inequality constraints are not in the space spanned by the gradients of the equality constraints. That is, let $f_p(y) =  y_{p-1}y_{p+1} - y_p^2$ for $p \in (1,k)$ and $h_q(y) = y_{q+1} - y_q$ for $q \in [1,k)$\footnote{We use the index $p,q$ instead of $j$ to conform with the statement of \cref{Thm:general-nonconvex}.} and set $a^i := (w_1^i - w_0^i,\dots,w_k^i - w_{k-1}^i)^\top$ for $i \in [0,m]$ and $A$ the matrix with rows corresponding to the $a^i$. We want to verify that
\begin{equation}\label{eq:check-not-in-span}
    \nabla f_p(y^*), \nabla h_q(y^*) \notin \mathcal L := \text{span} (a^0,a^1,\dots,a^m), \text{ for all } p \in (1,k) \text{ and } q \in [1,k).
\end{equation}
Further, by calculation we have
\begin{align}\label{eq:gradient-fy}
 [ \nabla f_p(y) ]_j = \left\{ \begin{array}{ll} y_{p+1} & \mbox{if}\;  j =p - 1  \\ -2y_{p}  & \mbox{if}\;  j =p   \\ y_{p-1} & \mbox{if}\;  j =p +1  \\ 0 & \mbox{otherwise}. \end{array} \right.
\end{align}
and
\begin{align}\label{eq:gradient-hy}
 [ \nabla h_q(y) ]_j = \left\{ \begin{array}{ll} -1  & \mbox{if}\;  j =q   \\ 1 & \mbox{if}\;  j =q +1  \\ 0 & \mbox{otherwise}. \end{array} \right.
\end{align}
To show these gradients are not in the span of $\mathcal L = \text{span}\left\{a^0, \dots, a^m\right\}$ we first of all construct a vector $s$ that is perpendicular to all the vectors $a^i$ for $i \in [0,m]$; that is, $s \in \mathcal L^\perp$. Then we show that for all $p \in (1,k)$ and $q \in [1,k)$, $\nabla f_p(y^*)$ and $\nabla h_p(y^*)$ have a nonzero inner product with $s$. This allows to conclude \eqref{eq:check-not-in-span} and thus the conclusion of Theorem~\ref{Thm:general-nonconvex}.

In order to construct $s$ we start with something simpler. Define a vector $v \in \BR^k$ as follows
\begin{equation*}
v_j := (-1)^j\tbinom{k}{j}, \text{ for } j \in [1,k].
\end{equation*}
The following claim, whose proof is found in the next subsection, demonstrates that $v$ is orthogonal to $w^i$, where $w^i = (w_1^i,\dots,w_k^i)^\top$ for $i \in [0,m]$.
\begin{claim}\label{orthogonal-claim}
$v^{\top} w^i =  \sum_{j=1}^{k} (-1)^j \tbinom{k}{j} (w_j)^i  = 0 \text{ for }  i \in [0, m]$.
\end{claim}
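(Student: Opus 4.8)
The plan is to recognize \cref{orthogonal-claim} as the classical fact that the $k$-th order forward finite difference annihilates every polynomial of degree strictly below $k$. Since $w_j=j$, the quantity $\sum_{j}(-1)^{j}\binom{k}{j}j^{i}$ is, up to the sign $(-1)^{k}$, the $k$-th forward difference at $0$ of the polynomial $t\mapsto t^{i}$, and the standing reduction $m+1<k$ made in the proof of \cref{Thm-IFR} (where this claim is invoked) guarantees $i\le m<k$, so this polynomial has degree $i<k$ and its $k$-th difference vanishes.

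Concretely, I would run the argument through generating functions. Apply the Euler operator $x\,\frac{d}{dx}$ exactly $i$ times to the binomial identity $\sum_{j=0}^{k}\binom{k}{j}(-1)^{j}x^{j}=(1-x)^{k}$. The left-hand side turns into $\sum_{j=0}^{k}\binom{k}{j}(-1)^{j}j^{i}x^{j}$, while the right-hand side becomes a finite linear combination of terms $x^{a}\frac{d^{a}}{dx^{a}}(1-x)^{k}$ with $1\le a\le i$, each of which still carries a factor $(1-x)^{k-a}$ because $a\le i\le m<k$. Evaluating at $x=1$ annihilates every such term, giving $\sum_{j=0}^{k}\binom{k}{j}(-1)^{j}j^{i}=0$ for $1\le i\le m$; since the $j=0$ summand is $0$ once $i\ge 1$, this is exactly $v^{\top}w^{i}=\sum_{j=1}^{k}(-1)^{j}\binom{k}{j}(w_{j})^{i}=0$. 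The $i=0$ instance is the elementary $\sum_{j=0}^{k}(-1)^{j}\binom{k}{j}=(1-1)^{k}=0$, consistent with the convention $w_{0}=0$ adopted in the tail reformulation (DMP-IFR'), where the zeroth moment row reduces to $y_{1}=1$.

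A purely combinatorial alternative would expand $j^{i}$ in the falling-factorial basis, $j^{i}=\sum_{\ell\le i}S(i,\ell)\,j(j-1)\cdots(j-\ell+1)$ with $S(i,\ell)$ the Stirling numbers of the second kind, use the absorption identity $\binom{k}{j}j(j-1)\cdots(j-\ell+1)=k(k-1)\cdots(k-\ell+1)\binom{k-\ell}{j-\ell}$, and then sum $\sum_{j}(-1)^{j}\binom{k-\ell}{j-\ell}=(-1)^{\ell}(1-1)^{k-\ell}=0$, which is valid because $\ell\le i\le m<k$. Either route is short, and I do not anticipate a genuine obstacle: the single point that needs care is the degree bound, namely that every exponent $i$ appearing is at most $m$ and hence strictly less than $k$ — precisely the nondegenerate regime $m+1<k$ to which the proof of \cref{Thm-IFR} has already reduced. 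The only secondary bookkeeping is the $i=0$/$j=0$ corner together with the $w_{0}=0$ convention inherited from the tail variables, which is routine once stated explicitly.
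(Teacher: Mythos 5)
Your proposal is correct and proves the identity by a genuinely different route from the paper. The paper establishes a (nominally stronger) shifted version, $\sum_j(-1)^j\binom{n}{j}(w_j+\delta)^i=0$ for every $\delta$ and $i\in[0,n-1]$, by induction on $n$: Pascal's rule $\binom{n+1}{j}=\binom{n}{j}+\binom{n}{j-1}$ is used to reindex, and the difference $(w_j+\delta)^i-(w_{j+1}+\delta)^i$ is rewritten as a polynomial of degree $i-1$ in $w_j+\delta$ so that the induction hypothesis (with arbitrary shift) closes the argument. You instead invoke the classical fact that a $k$-th order finite difference annihilates polynomials of degree below $k$, realized either by hitting $\sum_{j}\binom{k}{j}(-1)^jx^j=(1-x)^k$ with the Euler operator $i$ times and evaluating at $x=1$ (every surviving term keeps a factor $(1-x)^{k-a}$ with $a\le i<k$), or by expanding $j^i$ in the falling-factorial basis and using absorption together with $\sum_j(-1)^j\binom{k-\ell}{j-\ell}=0$ for $\ell\le i<k$. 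Both arguments hinge on exactly the same degree bound $i\le m<k$, which you correctly tie to the reduction $m+1<k$ made in the proof of \cref{Thm-IFR}; your route is a direct one-shot computation, whereas the paper's induction buys the shift parameter $\delta$, which is in fact never used downstream.

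The $i=0$/$j=0$ corner that you flag is worth making fully explicit, because your treatment is, if anything, more careful than the paper's. As literally stated, with the sum starting at $j=1$, the $i=0$ case of \cref{orthogonal-claim} evaluates to $\sum_{j=1}^k(-1)^j\binom{k}{j}=-1$, not $0$; the identity that is actually true (and that both you and the paper really prove) includes the $j=0$ term, i.e.\ $\sum_{j=0}^k(-1)^j\binom{k}{j}j^i=0$ for $0\le i<k$ with the convention $0^0=1$. The paper's own base case writes $\sum_{j=1}^n(-1)^j\binom{n}{j}=(1-1)^n$ and its induction step appeals to ``$\binom{n}{0}=0$'', both of which silently insert or delete that missing $j=0$ term. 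So your proof delivers everything the paper's proof delivers; for $i\ge1$ the two index ranges agree because the $j=0$ summand vanishes, and you should simply state that the $i=0$ row needs either the $j=0$ term or a separate treatment, which is a defect of the claim's wording rather than of your argument.
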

In other words, when defining the matrix $W \in \R^{m+1 \times k}$ whose rows are the $w^i$'s, from \cref{orthogonal-claim} we have $Wv = 0$. Note that the matrix $A$ has the form $A = W(I - U)$ where $I$ is the identity matrix and $U$ is the ``upper diagonal'' matrix with $U_{p,p+1} = 1, p \in [1,k]$ and the rest of it elements $0$. Thus we can find our target value $s$ that solves $As = 0$ by setting $s$ equal to the solution of $(I-P)s = v$; namely, $s_p = \sum_{j = p}^k (-1)^j \tbinom{k}{j}$ for $p \in [1,k]$. We have found a vector $s$ in the orthogonal space $\mathcal L^\perp$. Straightforward computation then shows
\begin{equation*}
    s^\top  \nabla f_p(y^*) = y^*_{p -1} s_{p+1} -2 y^*_p s_p + y^*_{p+1} s_{p-1} \text{ \ \ and  \ \ }
    s^\top  \nabla h_q(y^*) = s_{q+1} - s_q
\end{equation*}
for $p \in (1,k), q \in [1,k)$. From the fact that $y_j^* > 0 $ and that the sign of the $s_p$ alternates according to $p$ -- i.e., $s_p s_{p+1} <0 ,p \in [1,k)$ -- we conclude that the two inner product above are nonzero for all $p$. This implies that $\nabla f_p(y^*) \not\in \mathcal{L}$. And similarly $ \nabla h_q(y^*) \not\in \mathcal{L}$.



Now all the conditions in \cref{Thm:general-nonconvex} are satisfied. Thus we must have at least $k-m-1$ tight constraints whose gradients are linear independent. Recalling that the nonnegativity constraints are not tight, we isolate our attention to the constraints \eqref{eq:sub-logconcave}--\eqref{eq:sub-monotonicity} indexed in $[1,j_1)$ and $[j_1,k]$.

For the constraints  $ y_{j-1}y_{j+1} \le y_j^2, y_j - y_{j+1} \ge 0$ in $[1,j_1)$, both of them are tight since $y_j$ are all one in this interval. From the computation of the gradients of $f(y)$ and $h(y)$ in \eqref{eq:gradient-fy} and \eqref{eq:gradient-hy}, they form a set with at most $j_1 - 1$ many of them are linear independent, i.e. the constraints in this interval can give at most $j_1 - 1$ many tight constraints whose gradients are linear independent.

For the point $j = j_1$, both the constraints $ y_{j-1}y_{j+1} \le y_j^2$ and $y_j - y_{j+1} \ge 0$ in $[1,j_1)$ are not tight since we have $y_{j_1 - 1} = 1 = y_{j_1 } > y_{j_1+1}$.
For the interval $(j_1,k)$, we have argued that the constraints $ y_j - y_{j+1} \ge 0$ cannot be tight since $y_j$ are strictly decreasing in this region. By a simple calculation we must have at least $k-m-1 - (j_1 - 1) = k - m - j_1$ many constraints in the form $y_{j-1}y_{j+1} \le y_j^2$ are tight here. That is to say, among the $k - j_1 - 1$ many constraints in this form, at most $k - j_1 - 1 - (k - m - j_1) = m- 1$ many constraints are not tight. From here, similar reasoning to the proof of the log-concave case in Theorem~\ref{theorem:log-concave-solution} yields the form \eqref{ifr-geometric-form}. Further details are omitted.
%

\subsubsection{Proof of Claim~\ref{orthogonal-claim}.}

In fact, let $n = k$, we can prove a stronger result: $\sum_{j=1}^{n}  (-1)^j \tbinom{n}{j} (w_j + \delta )^i = 0$ for any $\delta$ and $i \in [0, n -1]$. We shall prove this identity by mathematical induction on $n$. When $n =1$, $j$ has only one choice  $0$ and for any $\delta$
\begin{equation*}
\sum_{j=1}^{n} (-1)^j \tbinom{n}{j} (w_j + \delta )^0= \sum_{j=1}^{n} (-1)^j \tbinom{n}{j} = (1 - 1)^n = 0.
\end{equation*}

Now suppose this is true for $n$, and let's verify the validness of desired identify for  $n + 1$. Recall the combinatorial identity $\tbinom{n+1}{j} = \tbinom{n}{j} + \tbinom{n}{j-1}$. Then for any $\delta$ and $i \le n$, we have
\begin{eqnarray*}
\sum_{j=1}^{n+1}(-1)^j \tbinom{n+1}{j} (w_j + \delta)^i &=& \sum_{j=1}^{n+1}(-1)^j (\tbinom{n}{j} + \tbinom{n}{j-1})(w_j + \delta)^i \\
&=& \sum_{j=1}^{n+1}(-1)^j \tbinom{n}{j} (w_j + \delta)^i + \sum_{j=1}^{n+1}(-1)^j \tbinom{n}{j-1}  (w_j + \delta)^i \\
&=& \sum_{j=1}^{n}(-1)^j \tbinom{n}{j} (w_j + \delta)^i + \sum_{j=1}^{n+1}(-1)^j \tbinom{n}{j-1}  (w_j + \delta)^i \\
&=& \sum_{j=1}^{n}(-1)^j \tbinom{n}{j} (w_j + \delta)^i + \sum_{j=1}^{n}(-1)^{j+1} \tbinom{n}{j} (w_{j +1}+ \delta)^i \\
&=& \sum_{j=1}^{n}(-1)^j \tbinom{n}{j} \left( (w_j + \delta)^i -  (w_{j+1} + \delta)^i \right),
\end{eqnarray*}
where the third equality is due to $\tbinom{n}{n+1} = 0$, and the fourth equality follows by renaming $j-1$ as the new index and from the fact $\tbinom{n}{0} = 0$.
Moreover, since $w_j = j$, we can find a vector $e \in \BR^j$ such that $(w_j + \delta)^i -  (w_{j+1} + \delta)^i  = (w_j + \delta)^i -  (w_j  + \delta + 1)^i  = \sum_{\ell =1}^{j - 1} e_{\ell}\cdot (w_j + \delta)^{\ell}$. By plugging this identity into the series of equalities above yields
\begin{eqnarray*}
\sum_{j=1}^{n+1}(-1)^j \tbinom{n+1}{j} (w_j + \delta)^i &=& \sum_{j=1}^{n}(-1)^j  \left( \tbinom{n}{j} (w_j + \delta)^i -  (w_{j+1} + \delta)^i  \right) \\
& = & \sum_{j=1}^{n}(-1)^j  \tbinom{n}{j} \sum_{\ell =1}^{i - 1} e_{\ell} \cdot (w_j + \delta)^{\ell}\\
& = & \sum_{\ell =1}^{i - 1} e_{\ell}  \sum_{j=1}^{n}(-1)^j  \tbinom{n}{j} \cdot (w_j + \delta)^{\ell} = 0,
\end{eqnarray*}
where the last equality follows by induction. \qquad

\subsection{Proof of Lemma~\ref{lemma:technical-monotonic}.}
\label{sec:proof-technical-monotonic}
If $a_{j}$s are all nonnegative or nonpositive, then $\phi{f}(z)$ is monotone and has at most one root.
Otherwise, there is an $m$ such that $a_{j} \le 0$ when $j \le m$ and $a_{j} \ge 0$ when $j > m$. Denote $\phi_1(z) := -\sum_{j=1}^{m}a_j z^{i_j}$ and $\phi_2(z) = \sum_{j=m+1}^{M}a_j z^{i_j}$.
Obviously, $\phi(z) = \phi_2(z) - \phi_1(z)$. Suppose $x_0$ is a root of $\phi(z)$, that is $\phi_2(z_0) = \phi_1(z_0) \neq 0$. Given any $z_1 > z_0$, due to \eqref{coefficient-inequlity} we have that
\begin{align*}
\phi_2(z_1) &= \sum_{j=m+1}^{M}a_j \left(\frac{z_1}{z_0}\right)^{i_j} (z_0)^{i_j}  \ge \left(\frac{z_1}{z_0}\right)^{i_{m+1}} \phi_2(z_0) \\
\phi_1(z_1) &= \sum_{j=1}^{m}-a_j \left(\frac{z_1}{z_0}\right)^{i_j} (z_0)^{i_j}  \le \left(\frac{z_1}{z_0}\right)^{i_{m}} \phi_1(z_0).
\end{align*}
Combining these two inequalities yields
\begin{equation}\label{Inequality:monotone}
\phi(z_1) \ge \left(\frac{z_1}{z_0}\right)^{i_{m+1}} \phi_2(z_0)-\left(\frac{z_1}{z_0}\right)^{i_{m}} \phi_1(z_0)> \left(\frac{z_1}{z_0}\right)^{i_{m}} \left(\phi_2(z_0)-\phi_1(z_0)\right)=0.
\end{equation}
Similarly, for any $z_2 < z_0$, it holds that $\phi(z_2) < \left(\frac{z_2}{z_0}\right)^{i_{m}} \left(\phi_2(z_0)-\phi_1(z_0)\right)=0$.

Consequently, $z_0$ is the only root. Moreover, when $x_0$ is not a root and satisfies $\phi(z_0) \ge 0$, then according to \eqref{Inequality:monotone}
$\phi(z_1) > \left(\frac{z_1}{z_0}\right)^{i_{m}} \left(\phi_2(z_0)-\phi_1(z_0)\right) > \phi_2(z_0)-\phi_1(z_0) = \phi(z_0)$ implying that $\phi{f}(z)$ is monotonically increasing on $\{ z \; | \; \phi(z) \ge 0 \}$.




\end{document}